\DeclareMathOperator{\rank}{rank}
\DeclareMathOperator{\stab}{Stab}
\DeclareMathOperator{\CZ}{CAT(0)}
\newtheorem{theorem}{Theorem}[section]
\newtheorem{lemma}[theorem]{Lemma}
\newtheorem{proposition}[theorem]{Proposition}
\newtheorem{corollary}[theorem]{Corollary}
\newtheorem{definition}[theorem]{Definition}
\newtheorem{remark}[theorem]{Remark}
\newtheorem{example}[theorem]{Example}
\newenvironment{proof}[1][Proof]{\begin{trivlist}
\item[\hskip \labelsep {\bfseries #1}]}{\end{trivlist}}
\begin{document}

\title{Accessibility of partially acylindrical actions}
\author{Michael Hill}
\date{\today}
\maketitle

%Weidmann's Remark
%
%It seems that the ideas presented in this note, together with the proof of BF-accessibility, should
%also yield an acylindrical version of accessibility for finitely presented groups where the condition is
%that stabilizers of long segments are small. Indeed, Dunwoody’s proof of Linnell accessibility and the
%proof of BF-accessibility are very similar in that they start a folding sequence with a well-understood
%splitting and gradually make it more complicated, on the way the complexity of the edge groups is
%being measured. In the case of Linnell accessibility, the original splitting is a wedge of circles and the
%measure of complexity for the edge groups is their order, while in the proof of BF-accessibility, the
%original splitting is a Dunwoody resolution and the measure of complexity is the complexity of the
%action on the Bass–Serre tree of the Dunwoody resolution.

\begin{abstract}
In \cite{Weidmann_kC} Weidmann shows that there a bound on the number of orbits of edges in a tree on which a finitely generated group acts $(k,C)$--acylindrically. In this paper we extend this result to actions which are $k$--acylindrical except on a family of groups with ``finite height''. We also give an example which gives a negative result to a conjecture of Weidmann from the same paper and produce a sharp bound for groups acting $k$--acylindrically. 
\end{abstract}

Given a group $G$ one question we would like to answer is if there is some bound on the size of a graph of groups decomposition for it. For example Grushko's theorem \cite{Grushko} implies that a free decomposition has at most $\rank(G)$ vertices with non-trivial label. Further examples include a result by Dunwoody \cite{Dunwoody_fp} which gives a bound for the number of edges of a (reduced) splitting over finite edge groups for a given finitely presented group, and by Bestvina and Feighn \cite{BestvinaFeighn} who extended this to \emph{small} edge groups. (A group is \emph{small} if it doesn't act hyperbolically on any tree.) These results do not extend to finitely generated groups; for instance Dunwoody \cite{Dunwoody_inaccess} gives an example of a finitely generated group which has splittings over finite edge groups with an arbitrary number of edges. 

It's also possible to obtain bounds by imposing restrictions other than restricting the class of edge groups. For example Sela \cite{Sela} showed that there is a bound for the size of a (minimal) splitting where the action on the corresponding Bass-Serre tree is $k$--acylindrical, assuming that $G$ is freely indecomposable and finitely generated. Weidmann \cite{WeidmannInitial} later reproved this and gave a nice bound of at most $2k(\rank G - 1)$ edges for the splitting. Later Delzant \cite{DelzantAcylindrical} showed that a bound for $(k,C)$--acylindrical actions exists provided that the acting group is finitely presented. Weidmann \cite{Weidmann_kC} then extended this to finitely generated groups and conjectured that there should be a common generalisation between this and Bestvina and Feighn's aforementioned result on actions with small edge stabilisers. More precisely they suggest that the currently known techniques should be enough to show that we can obtain a bound for a finitely presented group acting on a tree where \emph{large} subgroups fix at most region of bounded diameter \cite[pg.213]{Weidmann_kC}. (A group is \emph{large} if it's not small.) 

In this paper we extend Weidmann's result on $(k,C)$--acylindrical actions to actions which are $k$--acylindrical except on a family of groups with ``finite height''. Roughly speaking this means that if there is a family of subgroups $\mathcal{P}$ with a bound on the length of chains of subgroups and the diameter of the fixator of any group not in $\mathcal{P}$ is bounded by $k$, then we obtain a bound on the number of edges. (A more precise statement can be found in Section~\ref{sec:statements}.) In Section~\ref{sec:folds} we recall the definition and basic properties of Stallings folds \cite{Stallings}. In Section~\ref{sec:example} we construct examples which show that Weidmann's aforementioned conjecture from \cite[pg.213]{Weidmann_kC} is false. In Section~\ref{sec:influence} we introduce the notion of a \emph{forest of influence} and show that it interacts nicely with Stallings folds. We then apply this machinery in Sections~\ref{sec:reduced} and \ref{sec:extension} to prove the main positive results. Finally in Section~\ref{sec:sharp} we improve Weidmann's bound from \cite{WeidmannInitial} for actions which are $k$--acylindrical to have at most $2k \left( \rank G - \frac{5}{4} \right)$ edges (for non-cyclic groups) and show that this bound is sharp.

\section{Statement of positive results}\label{sec:statements}

We begin by recalling the natural correspondence between graph of groups decompositions of $G$ and trees on which $G$ acts \cite{Serre_trees}. If we take the ``universal cover'' of graph of groups we obtain a tree on which the fundamental group of the graph of groups acts. We call this tree the \emph{Bass-Serre tree}. Likewise we can quotient an (orientation preserving) action of $G$ on a tree to obtain a graph of groups whose fundamental group is $G$. Throughout we will implicitly use this correspondence. 

We now give a series of basic definitions.

\begin{definition}
Let $G$ be a group and $k$ be a non-negative integer. Let $\mathcal{Q}$ be a class of subgroups of $G$ which is closed under conjugation. An action of $G$ on a tree $T$ is \emph{(partially) $k$--acylindrical on $\mathcal{Q}$} if whenever some $H \in \mathcal{Q}$ fixes every edge in a reduced edge path $p$ then $p$ contains at most $k$ edges. 

If $\mathcal{Q}$ isn't specified then it's assumed to contain all the non-trivial subgroups of $G$ and we say the action is \emph{$k$--acylindrical}. An action is \emph{$(k,C)$--acylindrical} if it's $k$--acylindrical on the subgroups of $G$ with size strictly greater than $C$.
\end{definition}

\begin{definition}
The \emph{rank} of a finitely generated group $G$ is the minimal size of a generating set for $G$. Denote this quantity by $\rank G$.
\end{definition}

There a couple of trivial ways in which we can add arbitrarily many orbits of edges to a tree. The first is to add additional ``hanging'' edges which tell us nothing about the structure of the group; for example the splitting $G \cong G *_H H$ where $H \leqslant G$. The following definition prevents this.

\begin{definition}
The action of a group on a tree is said to be \emph{minimal} if there are no proper subtrees which are invariant under the action. 
\end{definition}
 
Another thing that we need to prevent is the possibility of arbitrary subdivision of edges. The notion of a tree being \emph{reduced} stops this behaviour; however a tree being $k$--acylindrical also stops this happening on edges whose stabiliser contains a member of $\mathcal{Q}$. As such we can introduce the following less restrictive notion.
 
\begin{definition}
Suppose $\mathcal{P}$ is a class of subgroups of $G$ which is closed under conjugation. A minimal action is said to be \emph{partially-reduced} over $\mathcal{P}$ if either

\begin{itemize}
\item $T/G$ is a circle consisting of a single vertex and edge; or
\item whenever a vertex $v$ of $T$ has stabiliser equal to that of an edge of $T$ and which is contained in a subgroup of a member of $\mathcal{P}$ then $v/G$ has valence at least $3$ in $T/G$.
\end{itemize} 

We say an action is \emph{reduced} if it's partially-reduced over the class of all subgroups of $G$. We also say an action is \emph{$C$--partially-reduced} if it's partially-reduced over the class of all subgroups of size at most $C$. 
\end{definition}

A critical element of the proof will be the idea of measuring how ``large'' a given group is relative to $\mathcal{P}$. 

\begin{definition}\thlabel{def:p_weight}
Let $\mathcal{P}$ be a conjugation invariant set of subgroups of $G$. For a subgroup $K \leqslant G$ suppose there is some maximal integer $n$ such that there are $H_1, \cdots, H_n \in \mathcal{P}$ with 
\begin{equation*}
K \leqslant H_1 < H_2 < \cdots < H_n
\end{equation*}
We define the \emph{$\mathcal{P}$--weight} of $K$ to be $2^n$ and we denote this quantity by $W_{\mathcal{P},K}$. (We say this is equal to $\infty$ if chains of arbitrary length exist.) We say that $K \leqslant G$ is \emph{larger than} $\mathcal{P}$ if it's not a subgroup of a member of $\mathcal{P}$; equivalently if $W_{\mathcal{P},K} = 1$. 

If $G$ acts on $T$ then we define the $\mathcal{P}$--weight of each edge to be the $\mathcal{P}$--weight of its stabiliser. We say that $\mathcal{P}$ has \emph{height} $M$ if the maximal weight of any $K \leqslant G$ is $2^M$; equivalently if $W_{\mathcal{P},1} = 2^M$.
\end{definition}

\begin{remark}
Since we insist that $\mathcal{P}$ is conjugation invariant we see that the $\mathcal{P}$-weight of a subgroup is conjugation invariant. As such we define the $\mathcal{P}$-weight of a conjugacy class of subgroups to be the $\mathcal{P}$-weight of any representative of that class.
\end{remark}

We now state an easier version of our main results. We will prove this before moving on to the full theorems as it will demonstrate the important ideas of the argument without being obscured by as many technical details.

\begin{theorem}\thlabel{res:main_simple}
Let $G$ be a finitely presented group and let $\mathcal{P}$ be a set of subgroups for $G$ with height $M$ and which is closed under conjugation and taking subgroups. Suppose $G$ acts on a tree $T$ and that this action is both $k$--acylindrical on groups larger than $\mathcal{P}$ and partially reduced on groups in $\mathcal{P}$. Then there is some $C(G)$ (which depends only on $G$) such the that number of edges of $T/G$ is bounded above by $(2k+1)2^M C(G)$.
\end{theorem}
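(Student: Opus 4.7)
My strategy is to combine Weidmann's fold-based argument for $k$-acylindrical actions with the forest-of-influence bookkeeping introduced in Section~\ref{sec:influence}, using the $\mathcal{P}$-weight of Definition~\ref{def:p_weight} as a potential function that converts the height $M$ of $\mathcal{P}$ into the factor $2^M$ in the final bound.

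First I would use finite presentability of $G$ to construct an initial $G$-tree $T_0$ together with an equivariant morphism $\varphi_0 \colon T_0 \to T$, where $T_0/G$ is a finite graph of groups whose edge count $C(G)$ is determined by a chosen finite presentation $\langle S \mid R \rangle$ of $G$ (for example, from the $1$-skeleton of the presentation $2$-complex pulled back along an equivariant map to $T$). I would then factor $\varphi_0$ through a sequence of Stallings folds $T_0 \to T_1 \to \dots \to T_N = T$ using the folding theorem from Section~\ref{sec:folds}, always passing to the minimal $G$-invariant subtree at each stage so that the process terminates at $T$ itself rather than at some strictly larger cover.

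Next I would attach the forest of influence to this folding sequence, which, loosely speaking, assigns to each edge orbit of $T/G$ a finite rooted tree of ``ancestor'' edge orbits in the earlier $T_i/G$. This partitions $E(T/G)$ into at most $C(G)$ influence classes, one rooted at each initial edge of $T_0/G$, and the remaining task is to bound each class by $(2k+1)2^M$. To do so I would stratify the edges of a single class by the $\mathcal{P}$-weight of their stabiliser. Edges of weight $1$, i.e.\ with stabiliser larger than $\mathcal{P}$, see $k$-acylindricity, which bounds any run of such edges along a reduced edge-path by $2k+1$ (the $+1$ absorbing a transitional edge at the interface with the $\mathcal{P}$-regime). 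Moving between weight strata corresponds to an edge-stabiliser growing strictly inside $\mathcal{P}$, which by the height-$M$ hypothesis can happen at most $M$ times along any descending chain of folds and so contributes the factor $2^M$. Partial-reducedness on $\mathcal{P}$ eliminates the trivial subdivisions that would otherwise pad the count within a single stratum. Combining these observations gives $(2k+1)2^M$ per influence class, and summing over the at most $C(G)$ classes yields the claimed global bound.

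The hardest step will be the fold-by-fold accounting: a single Stallings fold can affect several influence classes simultaneously and may raise the stabiliser of an edge from one $\mathcal{P}$-weight stratum to another, so one must verify that both the partition into classes and the stratification by weight behave well under every type of fold. Setting up the forest of influence in Section~\ref{sec:influence} is precisely what allows this compatibility to be established; once that machinery is in place, the stratified acylindricity argument outlined above converts it into the quantitative bound stated in the theorem.
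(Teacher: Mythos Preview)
Your high-level plan --- Dunwoody resolution to get $T'$ with at most $C(G)$ edge orbits, then Stallings folds, then the forest-of-influence machinery with $\mathcal{P}$-weight as potential --- matches the paper's. But your description of what the forest of influence \emph{is} does not, and the mismatch leads to a claimed bound that does not follow. In the paper the forest of influence is not an ancestry structure on edges through the folding sequence; it is a $G$-invariant subforest $\Gamma$ of a \emph{single} tree whose components each contain exactly one \emph{seed vertex} $u$, with every vertex $v$ in that component satisfying $\stab v \leqslant \stab u$. The key data are the \emph{connecting edges} (those not in $\Gamma$) and the potential is the \emph{sum} of the $\mathcal{P}$-weights of their stabilisers, starting at at most $2^M C(G)$. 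There is no partition of $E(T/G)$ into $C(G)$ classes each of size $\le (2k+1)2^M$: the number of connecting edges can \emph{increase} under a fold (one old connecting edge may be replaced by two new ones), so no stable per-initial-edge partition exists to sum over. Only the total weight is monotone.

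The actual mechanism for the $2^M$ is also different from your ``at most $M$ stratum transitions'' (which would give a linear factor in $M$, not exponential). The point is that whenever a fold forces a new seed vertex, each new connecting edge that appears has stabiliser \emph{strictly containing} an old connecting group, so its weight is at most half the old one; since one old connecting edge can spawn two new ones, the total weight stays bounded. When a connecting group becomes larger than $\mathcal{P}$ (weight $1$), one collapses a path of length $\le k$ in $T$ and inducts on the total weight. If this never happens one passes to the limit and obtains a seed set for $T$ itself with at most $\tfrac12 W_{\mathcal{P},S}$ connecting edges; a separate lemma (\thref{res:reduced_key_lemma}) then uses partial reducedness and $k$-acylindricity to bound $|E(T/G)|$ by $(2k+1)$ times that number. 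Your sketch conflates this endgame lemma with the fold-by-fold weight argument, and does not supply the ``collapse a $k$-path and induct'' step that handles connecting groups escaping $\mathcal{P}$.
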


Our main results are two different generalisations of the above. In the first we extend the result to certain cases where $\mathcal{P}$ isn't closed under taking subgroups, which is necessary for including infinite subgroups in $\mathcal{P}$. In the second we extend the result to groups which are merely finitely generated instead of just those which are finitely presented. In order to state the former we first need to make the following definitions.

\begin{definition}
Let $\mathcal{P}$ be a set of subgroups for $G$. Let $K$ be a subgroup of $G$. We say $H \in \mathcal{P}$ is a \emph{minimal extension of $K$ (to $\mathcal{P}$)} if $K \leqslant H$ and whenever $\tilde{H} \in \mathcal{P}$ with $K \leqslant \tilde{H} \leqslant H$ then $\tilde{H} = H$. We say that $K$ is \emph{$\mathcal{P}$--closed} if every minimal extension of any subgroup of $K$ is contained in $K$. We say an action of $G$ on a tree $T$ is \emph{$\mathcal{P}$--closed} if all its edge stabilisers are $\mathcal{P}$--closed. 
\end{definition}

\begin{remark}
If $\mathcal{P}$ has finite height then minimal extensions always exist for any group which isn't larger than $\mathcal{P}$.
\end{remark}

% The \emph{$\mathcal{P}$--closure} of $K$ (denoted $K_{\mathcal{P}}$) is defined to be the unique minimal subgroup of $G$ which contains $K$ and is $\mathcal{P}$--closed. (This is equal to the intersection of the subgroups of $G$ which are $\mathcal{P}$--closed and contain $K$. This intersection is non-empty as $G$ satisfies these conditions.) 

\begin{definition}\thlabel{def:condition_dagger}
We say that $\mathcal{P}$ satisfies condition $(\dagger)$ if the following conditions hold.
\begin{itemize}
\item $\mathcal{P}$ has finite height.
\item Suppose $G$ acts on a tree $T$ and let $e$ be an edge of $T$. Then for any subgroup $K \leqslant \stab e$ which is not larger than $\mathcal{P}$ there is a vertex $v$ of $T$ which is fixed by some minimal extension of $K$ to $\mathcal{P}$. In particular this always holds if each minimal extension is of finite index as a finite index extension of an elliptically acting group also fixes a point \cite{Serre_trees}.
\item Every member of $\mathcal{P}$ is $\mathcal{P}$-closed. Equivalently if $H_1$ and $H_2$ are in $\mathcal{P}$ then so is $H_1 \cap H_2$. A third equivalent condition is that minimal extensions to $\mathcal{P}$ are unique.
\end{itemize} 

% In particular $(\dagger)$ always holds if each minimal extension to $\mathcal{P}$ is of finite index as a finite index extension of an elliptically acting group also fixes a point \cite{Serre_trees}.

% Moreover we say $T$ satisfies condition $(\dagger\dagger)$ (on $\mathcal{P}$) if $K_{\mathcal{P}}$ also fixes the edge $e$. In other words $T$ satisfies $(\dagger\dagger)$ iff all its edge groups are $\mathcal{P}$--closed.
\end{definition}

\begin{example}\thlabel{ex:hyperbolic_dagger}
Suppose that $G$ is a torsion-free hyperbolic group. Take $\mathcal{P}$ to be the set of cyclic subgroups of $G$ which are root-closed. Equivalently $\mathcal{P}$ is the set of maximal cyclic subgroups of $G$. This $\mathcal{P}$ satisfies $(\dagger)$ since every cyclic subgroup is contained (with finite index) in a unique maximal cyclic subgroup. Moreover a tree which $G$ acts on is $\mathcal{P}$--closed iff all its edge stabilisers are root-closed.
\end{example}

We are now ready to state our main result, which as mentioned before is a pair of extensions of \thref{res:main_simple}.

\begin{theorem}\thlabel{res:main_result}
Let $G$ be a finitely generated group and let $\mathcal{P}$ be a set of subgroups for $G$ which is closed under conjugation and has height $M$. Suppose $G$ acts on a tree $T$ and that this action is both $k$--acylindrical on groups larger than $\mathcal{P}$ and partially reduced on subgroups of members of $\mathcal{P}$. Then the following statements hold.
\begin{enumerate}[label=(\alph*)]
\item \label{pt:main_fp} Suppose $G$ is finitely presented, $\mathcal{P}$ satisfies $(\dagger)$ and $T$ is $\mathcal{P}$--closed. Then there is some integer $C(G)$ such that the number of edges of $T/G$ is bounded above by $(2k+1)2^M C(G)$.
\item \label{pt:main_P_closed} If $\mathcal{P}$ is closed under taking subgroups then the number of edges of $T/G$ is bounded above by $\left(\frac{2k+1}{2}\right) 2^{M} (\rank G - 1)$. Moreover suppose either of the following conditions hold 
\begin{itemize}
\item $T$ is reduced and $k > 1$; or
\item every edge stabiliser of $T$ is not in $\mathcal{P}$;
\end{itemize}
then if $G$ isn't cyclic the number of edges of $T/G$ is bounded above by ${2^{M} k (\rank G - 1)}$.
\end{enumerate}
\end{theorem}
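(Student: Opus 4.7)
The plan is to treat the two parts separately. Part \ref{pt:main_P_closed} carries the bulk of the combinatorial work via Stallings folds, while part \ref{pt:main_fp} will be reduced, using $(\dagger)$ together with the $\mathcal{P}$--closedness of $T$, to (a variant of) the argument already used for \thref{res:main_simple}.

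For part \ref{pt:main_P_closed}, I would follow Weidmann's folding strategy from \cite{WeidmannInitial}. Fix a generating set $\{g_1,\dots,g_n\}$ of $G$ with $n = \rank G$ and form the equivariant map $f_0 \colon \widetilde{R}_n \to T$ from the universal cover of an $n$--petalled rose; its image is a finite union of $G$--translates of arcs. I then fold equivariantly (using Section~\ref{sec:folds}) until the image is a $G$--invariant subtree, hence by minimality equal to $T$. The forest of influence from Section~\ref{sec:influence} lets me trace every edge of the final $T/G$ back to one of the $n$ initial arcs, so each generator is ``responsible'' for only a bounded number of edges in the quotient.

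The main obstacle here is controlling how the $\mathcal{P}$--weight of edge stabilisers changes during folds: a fold can enlarge the stabiliser of the surviving edge and push it upward in a chain $H_1 < H_2 < \cdots$ drawn from $\mathcal{P}$. Because $\mathcal{P}$ is closed under taking subgroups every intermediate group lies in $\mathcal{P}$, and the height assumption caps any such chain at length $M$, producing the factor $2^M$. The $k$--acylindricity hypothesis on groups larger than $\mathcal{P}$ bounds the ``reach'' of a non-$\mathcal{P}$ fixer to $k$ edges on either side of an identification, giving the factor $2k+1$, and partial-reducedness on $\mathcal{P}$--subgroups removes the only remaining source of spurious subdivisions. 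Combining these with a Grushko-style accounting of the folds (one generator essentially gets absorbed into a base loop, leaving $\rank G - 1$ independent contributions) yields the bound $\frac{2k+1}{2} 2^M(\rank G - 1)$. For the sharper bound $2^M k(\rank G - 1)$ I would argue that one of the two $k$--sides in the acylindricity count is never actually used: when $T$ is reduced and $k>1$ the degenerate fold configuration that uses it is incompatible with reducedness, and when no edge stabiliser lies in $\mathcal{P}$ the acylindricity hypothesis applies to every edge, so the ``$+1$'' term in $2k+1$ can be dropped.

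For part \ref{pt:main_fp} I would enlarge $\mathcal{P}$ to its downward closure $\mathcal{P}' = \{K : K \leqslant H \text{ for some } H \in \mathcal{P}\}$ and then apply \thref{res:main_simple}. Condition $(\dagger)$ guarantees that every subgroup $K$ of an edge stabiliser that is not larger than $\mathcal{P}$ has a unique minimal extension which fixes a vertex, and the $\mathcal{P}$--closedness of $T$ ensures that this extension already sits inside $\stab e$; consequently no $\mathcal{P}$--weight of any edge changes when passing from $\mathcal{P}$ to $\mathcal{P}'$, and partial-reducedness is preserved. The delicate point will be verifying that folds preserve $\mathcal{P}$--closedness throughout the fold sequence, so that the substitution $\mathcal{P} \leadsto \mathcal{P}'$ remains valid until we reach the final tree; this uses the uniqueness of minimal extensions from the third bullet of $(\dagger)$ together with the fact that folds amalgamate edge stabilisers along a common subgroup. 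Once this is in place the bound $(2k+1)2^M C(G)$ transfers verbatim from \thref{res:main_simple}.
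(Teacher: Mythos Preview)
Your approach to part \ref{pt:main_fp} has a fatal gap. Passing to the downward closure $\mathcal{P}'$ of $\mathcal{P}$ typically destroys finite height: in the motivating case of \thref{ex:hyperbolic_dagger}, $\mathcal{P}$ consists of the maximal cyclic subgroups of a torsion-free hyperbolic group and has height $1$, but $\mathcal{P}'$ is the set of \emph{all} cyclic subgroups, which contains arbitrarily long chains $\langle a\rangle>\langle a^2\rangle>\langle a^4\rangle>\cdots$ and so has infinite height. Thus \thref{res:main_simple} is simply inapplicable to $\mathcal{P}'$, and your claim that ``no $\mathcal{P}$--weight of any edge changes'' is false, since $\mathcal{P}'$--weights are computed via chains in $\mathcal{P}'$, not $\mathcal{P}$. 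The paper does \emph{not} enlarge $\mathcal{P}$; instead it keeps $\mathcal{P}$ fixed and, at each step of the fold sequence, inserts extra type II folds (\thref{res:extra_mod_fp}) that push each connecting group up to its unique minimal extension $H\in\mathcal{P}$. Condition $(\dagger)$ supplies such an $H$ acting elliptically, and $\mathcal{P}$--closedness of the \emph{target} tree $T$ (not of the intermediate trees) is what guarantees these extra folds factor through $\Psi$. You have the role of $\mathcal{P}$--closedness backwards: it is not preserved along the fold sequence, but only used to certify that the auxiliary folds are compatible with the map to $T$.

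For part \ref{pt:main_P_closed} your outline is in the right spirit but omits a genuine technical issue. When you fold from the rose the acting group is $F(X)$, not $G$, and after a fold an element of $\ker(\phi\colon F(X)\to G)$ may start fixing a vertex; this breaks the hypothesis needed in \thref{res:forests_folds_P_closed} that the kernel meets every vertex stabiliser trivially, which is what makes the $\mathcal{P}$--weight (defined via $\phi$) behave. The paper handles this with \thref{res:extra_mod_P_closed}: at each step one replaces the acting group by the fundamental group of the current graph of groups with vertex labels replaced by their $\phi$--images, killing the offending kernel elements without changing the weights. Without this, your ``Grushko-style accounting'' cannot be carried out.
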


\begin{remark}
In \thref{res:main_simple} and \thref{res:main_result} \ref{pt:main_fp} the bound $C(G)$ is given by Dunwoody's resolution lemma (\thref{res:dunwoody_resolution}). Dunwoody's resolution lemma holds for so called \emph{almost finitely presented groups} and this extends to both \thref{res:main_simple} and \thref{res:main_result} \ref{pt:main_fp}. (A group is \emph{almost finitely presented} if it's both finitely generated and acts freely, simplicially and cocompactly on a simplicial complex $X$ with $H^1(X,\mathbb{Z}_2) = 0$.)
\end{remark}

The following is an immediate consequence of \thref{res:main_result} \ref{pt:main_P_closed} and is an extension of Weidmann's result on $(k,C)$--acylindrical actions \cite{Weidmann_kC}. In particular this shows that the number of prime factors is the limiting factor, not the absolute size of the group.

\begin{corollary}\thlabel{res:main_finite_edges}
Let $G$ be a finitely generated group and $M \in \mathbb{N}$. Suppose $G$ acts on a tree $T$ and that this action is both $k$--acylindrical on groups which are infinite or have at least $M$ prime factors and partially reduced on subgroups with at most $M-1$ prime factors. (Where the number of prime factors is counted with multiplicity.) Then the number of edges of $T/G$ is bounded above by $\left(\frac{2k+1}{2}\right) 2^{M} (\rank G - 1)$. Moreover if either $T$ is reduced and $k > 1$ or every edge stabiliser of $T$ is either infinite or has at least $M$ prime factors then the number of edges of $T/G$ is bounded above by $2^{M} k (\rank G - 1)$.
\end{corollary}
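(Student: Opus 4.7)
The plan is to deduce this immediately from \thref{res:main_result} \ref{pt:main_P_closed} by choosing $\mathcal{P}$ to be the class of all finite subgroups $H \leqslant G$ whose order has at most $M-1$ prime factors counted with multiplicity. This class is closed under conjugation (orders are preserved) and closed under taking subgroups (the order of a subgroup of a finite group divides the order of the ambient group, so the number of prime factors with multiplicity is monotone).

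Next I would verify that $\mathcal{P}$ has height at most $M$. Given any chain $H_1 < H_2 < \cdots < H_n$ in $\mathcal{P}$, each strict inclusion of finite groups forces the order to grow by a factor of at least two, so $\Omega(|H_{i+1}|) \geqslant \Omega(|H_i|) + 1$ at each step. Starting from $H_1$ with $\Omega(|H_1|) \geqslant 0$, this gives $\Omega(|H_n|) \geqslant n-1$, and since $H_n \in \mathcal{P}$ forces $\Omega(|H_n|) \leqslant M - 1$ we conclude $n \leqslant M$, i.e.\ $W_{\mathcal{P},1} \leqslant 2^M$, which is all that is needed for the bound.

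The last step is to translate the hypotheses. A subgroup $K \leqslant G$ is larger than $\mathcal{P}$ precisely when it fails to sit inside any member of $\mathcal{P}$, which by our choice of $\mathcal{P}$ means $K$ is either infinite or finite with at least $M$ prime factors; so the $k$--acylindricity hypothesis transfers verbatim. Since $\mathcal{P}$ is closed under subgroups, ``subgroups of members of $\mathcal{P}$'' coincides with $\mathcal{P}$ itself, so the partial reduction hypothesis also matches directly. Feeding this into \thref{res:main_result} \ref{pt:main_P_closed} yields both stated bounds, the ``moreover'' clauses corresponding one-to-one with those of the theorem. The main obstacle is nothing more than this bookkeeping; the only points requiring care are the prime-factor counting used in the height estimate and the observation that the theorem's improved bound assumes $G$ is non-cyclic, but this case is vacuous since then $\rank G - 1 = 0$ and the first bound already forces $T/G$ to have no edges.
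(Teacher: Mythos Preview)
Your proposal is correct and follows essentially the same approach as the paper: define $\mathcal{P}$ to be the finite subgroups whose order has at most $M-1$ prime factors, check that it is closed under subgroups and has height at most $M$, and invoke \thref{res:main_result}\ref{pt:main_P_closed}. You have simply filled in more of the bookkeeping (the height estimate and the translation of hypotheses) than the paper does, and you also explicitly address the non-cyclic caveat, which the paper leaves implicit.
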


\begin{proof}
Let $\mathcal{P}$ be the set of finite subgroups of $G$ whose order has at most $M-1$ prime factors. Observe that $\mathcal{P}$ has height of at most $M$ and is closed under taking subgroups. The result now immediately follows from \thref{res:main_result} \ref{pt:main_P_closed}. $\square$
\end{proof}

We also apply \thref{res:main_result} \ref{pt:main_fp} to a couple of specific cases to get some interesting results. The first case is a generalisation of \thref{ex:hyperbolic_dagger}; where a torsion-free hyperbolic group acts on a tree with root-closed edge stabilisers. We now allow the group is to have finite order elements and the root closed condition is replaced by one which says that the maximal virtually $\mathbb{Z}$ subgroups of edge stabilisers should be ``almost'' maximal in $G$. In the second we consider splittings of RAAGs which are $k$--acylindrical on its non-abelian subgroups and the maximal abelian subgroups of edge stabilisers should be maximal for their rank.

\begin{corollary}\thlabel{res:main_hyperbolic}
Let $G$ be a hyperbolic group. We say a virtually $\mathbb{Z}$ subgroup $H \leqslant G$ is \emph{$m$-almost maximal} if whenever we have a virtually cyclic $K \leqslant G$ with $H \leqslant K$ then $\left[ H:K \right] \leq m$. Let $\mathcal{P}_m$ be the collection of subgroups of $G$ which are either finite or $m$-almost maximal. 

Suppose $G$ acts on a tree $T$ which is partially reduced on $\mathcal{P}_m$ and $k$--acylindrical on groups larger than $\mathcal{P}_m$. Suppose also that $T$ is $\mathcal{P}_m$-closed. Then the number of edges of $T/G$ is bounded above by $(2k+1) C'(G)$. (Where $C'(G) = 2^{n}C(G)$ for some $n \in \mathbb{N}$.)
\end{corollary}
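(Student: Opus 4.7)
The plan is to apply Theorem \thref{res:main_result}\ref{pt:main_fp} after replacing $\mathcal{P}_m$ by a slightly larger family. The obstruction to using $\mathcal{P}_m$ directly is that it need not satisfy condition $(\dagger)$: two $m$--almost maximal virtually cyclic subgroups of the same maximal virtually cyclic $V$ can intersect in a subgroup of index as large as $m^{2}$ in $V$, which need not itself be $m$--almost maximal. I therefore take $\mathcal{P}$ to be the intersection closure of $\mathcal{P}_m$, i.e.\ the family obtained from $\mathcal{P}_m$ by closing under finite intersection.

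Because $\mathcal{P}$ still contains every maximal virtually cyclic subgroup, the class of subgroups larger than $\mathcal{P}$ (the non--virtually--cyclic ones) and the class of subgroups of members of $\mathcal{P}$ (the subgroups of maximal virtually cyclic subgroups) agree with the corresponding classes for $\mathcal{P}_m$, so the $k$--acylindricity and partial reducedness hypotheses of the corollary transfer verbatim to the hypotheses of the theorem. For $\mathcal{P}$--closedness of $T$: since $\mathcal{P}$ is closed under intersection by construction, the unique minimal $\mathcal{P}$--extension of a $K \leqslant \stab e$ equals $\bigcap_i H_i$, where the $H_i$ run over the minimal $\mathcal{P}_m$--extensions of $K$; the assumed $\mathcal{P}_m$--closedness places each $H_i$ inside $\stab e$, so $\bigcap_i H_i$ lies in $\stab e$ too.

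It remains to verify $(\dagger)$ for $\mathcal{P}$. Closure under intersection, and hence uniqueness of minimal extensions, is built in. The vertex--fixing requirement holds because the minimal $\mathcal{P}$--extension of any $K \leqslant \stab e$ not larger than $\mathcal{P}$ has finite index over $K$: it equals $K$ when $K$ is finite, and is otherwise a finite index subgroup of the unique maximal virtually cyclic containing $K$; the Serre fact recalled in \thref{def:condition_dagger} then makes it elliptic. For bounded height I use two standard facts for hyperbolic groups: finite subgroups of $G$ have uniformly bounded order, and every infinite virtually cyclic subgroup lies in a unique maximal virtually cyclic. Any chain in $\mathcal{P}$ therefore begins with a bounded initial segment of finite subgroups and, after at most one transition, is confined to a single maximal virtually cyclic $V$; finite generation of $V$ gives only finitely many subgroups of index $\leqslant m$, so $\mathcal{P} \cap V$ is finite and bounds the remaining chain length, with the bound uniform over $V \leqslant G$ by the torsion bound on $G$.

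Applying Theorem \thref{res:main_result}\ref{pt:main_fp} to this $\mathcal{P}$ (and using that hyperbolic groups are finitely presented) then yields a bound of $(2k+1) 2^{M} C(G)$, where $M$ is the height of $\mathcal{P}$; setting $C'(G) := 2^{M} C(G)$ recovers the stated form. I expect the main technical point to be pinning down the uniform bound on $|\mathcal{P} \cap V|$ across all maximal virtually cyclic $V \leqslant G$, particularly when $V$ has infinite dihedral rather than infinite cyclic quotient; this needs slightly more care but is handled by the same finite--index--subgroup principle together with the torsion bound on $G$.
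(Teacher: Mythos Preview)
Your proposal is correct and follows essentially the same route as the paper: replace $\mathcal{P}_m$ by its intersection closure $\mathcal{P}'_m$, check $(\dagger)$, and apply \thref{res:main_result}\ref{pt:main_fp}. You are in fact more explicit than the paper about why the acylindricity, partial-reducedness, and $\mathcal{P}$--closedness hypotheses transfer from $\mathcal{P}_m$ to $\mathcal{P}'_m$; the paper leaves these implicit. Conversely, for the one point you flag as needing care---the uniform bound on $|\mathcal{P}'_m \cap V|$ across all maximal virtually cyclic $V$---the paper supplies exactly the missing ingredient: it invokes the structure theorem that every infinite virtually cyclic group is either $K*_{K}$ or $A*_{C}B$ with $K,A,B,C$ finite, and then uses the uniform torsion bound in $G$ to control $|K|$ and $|C|$, hence the subgroup count.
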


\begin{proof}
Observe that $\mathcal{P}_m$ is not closed under intersections and so doesn't satisfy condition $(\dagger)$. Instead define $\mathcal{P}'_m$ to be the set of subgroups which are a (finite) intersection of groups in $\mathcal{P}_m$. It's clear that if $\mathcal{P}'_m$ has finite height then it satisfies $(\dagger)$. As a hyperbolic group has only finitely many conjugacy classes of finite subgroups \cite{FiniteHyperbolic} we just need to show that chains of infinite subgroups in $\mathcal{P}'_m$ have bounded length. It therefore suffices to show that given any index in $\mathbb{N}$ there is a uniform bound on the number of subgroups of that index for any virtually cyclic subgroup of $G$. 

We have that every virtually cyclic $H \leqslant G$ is either of the form $H \cong K*_{K}$ where $K$ is finite or $H \cong A*_{C}B$ where $A,B,C$ are finite and $C$ is an index $2$ subgroup of both $A$ and $B$ \cite{VirCycStructure}. The general description of a subgroup of the fundamental group of a graph of groups \cite{Serre_trees} now tells us there's a bound on the number of subgroups of $H$ of a given index which depends only on the index and the size of either $K$ or $C$ respectively. Again a hyperbolic group has finitely many conjugacy classes of finite subgroups, which uniformly bounds the order of $K$ and $C$.  This implies the result. $\square$
\end{proof}

%
%**********************************************************************************************************
%
% WE CAN MAKE THIS SATISFY THE NEW DAGGER IF WE REPLACE WITH AT MOST M PRIME FACTORS WITH DIVIDES SOME M.
%
% MIGHT NEED TO EXCLUDE TORSION AGAIN...
%
%**********************************************************************************************************

Before stating the other application we briefly recall the definition of a RAAG.

\begin{definition}
Let $\Gamma$ be a finite graph. Let $v_1, \cdots , v_n$ be the vertices of $\Gamma$. The \emph{right-angled Artin group} (RAAG) associated to $\Gamma$ is the group $A(\Gamma)$ where
\begin{equation*}
A(\Gamma) := \left\langle v_1, \cdots , v_n \:\: | \:\: [v_i , v_j] \:\: \text{wherever there's an edge between } v_i \text{ and } v_j \text{ in } \Gamma. \right\rangle
\end{equation*}
\end{definition}

\begin{corollary}\thlabel{res:main_raag}
Let $G = A(\Gamma)$ be a RAAG. An abelian subgroup $H \leqslant G$ is said to be \emph{rank maximal} if whenever we have an abelian subgroup $K \leqslant G$ which contains $H$ with finite index we have $K = H$. Let $\mathcal{P}$ be the collection of rank maximal abelian subgroups of $G$. Suppose $G$ acts on a tree $T$ which is partially reduced on abelian subgroups and $k$--acylindrical on non-abelian subgroups. Suppose also that $T$ is $\mathcal{P}$-closed. Then the number of edges of $T/G$ is bounded above by $(2k+1) 2^{n}C(G)$ (where $n$ is the size of the largest complete subgraph of $\Gamma$).
\end{corollary}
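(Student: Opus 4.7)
The plan is to apply Theorem~\thref{res:main_result}~\ref{pt:main_fp} to $G = A(\Gamma)$ with $\mathcal{P}$ the set of rank maximal abelian subgroups. A RAAG is finitely presented and $\mathcal{P}$ is manifestly conjugation invariant. Since every abelian subgroup of $G$ is contained in a rank maximal abelian subgroup (its root closure, introduced below), the subgroups of members of $\mathcal{P}$ are exactly the abelian subgroups of $G$, and the subgroups larger than $\mathcal{P}$ are exactly the non-abelian ones; so the action hypotheses of the corollary translate directly into those of the theorem, and $\mathcal{P}$-closedness of $T$ is part of the assumptions. The substantive content is therefore to bound the height of $\mathcal{P}$ by $n$ and to verify condition $(\dagger)$.

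For the height bound, every abelian subgroup of $A(\Gamma)$ is free abelian of rank at most $n$, since it lies in the free abelian subgroup spanned by a clique of $\Gamma$. In any chain $H_1 < H_2 < \cdots < H_m$ of rank maximal abelian subgroups, each inclusion must have infinite index (otherwise $H_{i+1}$ would violate the rank maximality of $H_i$), so the ranks strictly increase and $m$ is bounded by $n+1$, giving $M \leqslant n$ up to the usual off-by-one depending on whether the trivial subgroup is kept in $\mathcal{P}$.

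For condition $(\dagger)$ I would invoke two classical facts from the structure theory of RAAGs: maximal abelian subgroups of $G$ are root closed, and centralisers are root closed, that is $Z(g^a) = Z(g)$ for $a \neq 0$. Given these, for any abelian $K \leqslant G$ the \emph{root closure} $\hat{K} := \{g \in G : g^j \in K \text{ for some } j > 0\}$ is an abelian subgroup of $G$: root closedness of centralisers promotes the pairwise commutativity of the finitely many $g^j \in K$ to pairwise commutativity of the $g$'s themselves. Moreover $\hat{K}$ is itself root closed in $G$ and so rank maximal. Thus $\hat{K}$ is the unique minimal rank maximal abelian extension of $K$, which gives uniqueness of minimal extensions (equivalent to the intersection closure required by the third bullet of $(\dagger)$). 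Since $\hat{K}/K$ is finitely generated torsion, hence finite, a minimal extension of any $K \leqslant \stab(e)$ fixes a vertex by Serre \cite{Serre_trees}, verifying the second bullet; the first bullet (finite height) is already established.

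Applying Theorem~\thref{res:main_result}~\ref{pt:main_fp} then yields the stated bound. The main obstacle is the verification of $(\dagger)$, and within that the root closedness statements for RAAGs, but these are classical (e.g.\ from Servatius's work on centralisers) and can simply be cited rather than reproved.
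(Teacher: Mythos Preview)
Your proposal is correct and follows essentially the same approach as the paper: both reduce to \thref{res:main_result}~\ref{pt:main_fp} by bounding the height of $\mathcal{P}$ via the rank bound on abelian subgroups of a RAAG and verifying $(\dagger)$ via root-closedness, so that minimal extensions are unique and finite-index (hence elliptic). The paper cites the Flat Torus Theorem for the rank bound and Baudisch's free/free-abelian dichotomy for root-closedness where you invoke clique subgroups and Servatius-style centraliser facts, but the logic is identical; one small imprecision to fix is that an arbitrary abelian subgroup is only \emph{conjugate into} a clique subgroup rather than literally contained in one, though the rank bound you need follows either way.
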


\begin{proof}
Recall that a RAAG acts freely and cocompactly on a simply connected $\CZ$-cube complex $X_{\Gamma}$ whose dimension is equal to the size of the largest complete subgraph of $\Gamma$.  The Flat Torus Theorem \cite[Theorem II.7.1]{BridsonHaefliger} says that an abelian subgroup $H \leqslant A(\Gamma)$ must act properly and cocompactly by isometries on a Euclidean hyperplane of $X_{\Gamma}$. In particular $H$ must have rank at most equal to the size of the largest complete subgraph of $\Gamma$. Hence $\mathcal{P}$ has finite height.

Now every rank $2$ subgroup $\left\langle u,v \right\rangle \leqslant A(\Gamma)$ is either free abelian or free \cite[Theorem~1.2]{Baudisch}. Suppose $H \leqslant A(\Gamma)$ is an abelian subgroup, $u$ is a root of an element of $H$ and pick any $v \in H$. Since a power of $u$ is in $H$ and $H$ is abelian we see that $\left\langle u,v \right\rangle$ cannot be non-abelian free and so $u$ and $v$ must commute. Hence every member of $\mathcal{P}$ is root closed.

%Consider $G$ as a series of HNN extensions starting from the trivial group. Observe that Britton's lemma implies (by induction on the number of vertices of $\Gamma$) that every non-cyclic abelian subgroup of $G$ is conjugate into some $A(\Gamma') \leqslant A(\Gamma)$ where $\Gamma'$ is a complete subgraph of $\Gamma$. In particular this implies that $\mathcal{P}$ has finite height and that each abelian subgroup of $G$ has a finite index extension to one which is rank maximal. In addition we see that every member of $\mathcal{P}$ is root closed as $A(\Gamma')$ is a root closed subgroup of $A(\Gamma)$ where $\Gamma'$ is a subgraph of $\Gamma$. 

It remains to check that $\mathcal{P}$ satisfies condition $(\dagger)$, then we can apply \thref{res:main_result} \ref{pt:main_fp} to get the result. Pick any $H \in \mathcal{P}$ and $K \leqslant H$ and let $M \in \mathcal{P}$ be a minimal extension of $K$. We must have $M \leqslant H$ as $H$ is root closed and $K$ is a finite index subgroup of $M$. Hence $\mathcal{P}$ is $\mathcal{P}$-closed and hence satisfies $(\dagger)$. $\square$
\end{proof}

\begin{remark}
For a general group $G$ it need not be the case that $\mathcal{P}$ as defined in \thref{res:main_raag} satisfies $(\dagger)$. For example if $G \cong \mathbb{Z} *_{2\mathbb{Z}} \mathbb{Z}$ then $G$ acts freely and cocompactly on a $\CZ$ space; but contains two rank maximal copies of $\mathbb{Z}$ whose intersection is another copy of $\mathbb{Z}$ which is not rank maximal.
\end{remark}

\thref{res:main_result} \ref{pt:main_P_closed} also immediately implies Weidmann's earlier result on $k$--acylindrical actions \cite{WeidmannInitial}; which says that a finitely generated group acting $k$-acylindrically on a tree without edges with trivial stabiliser has at most $2(\rank G - 1)k$ orbits of edges. Indeed with slightly more work we'll show it's possible to improve their bound sightly further to one which we'll show is the best possible.

\begin{theorem}\thlabel{res:sharp_bound}
Let $G$ be a (non-cyclic) finitely generated group acting $k$--acylindrically on a minimal tree $T$ (where $k \geq 1$.) Suppose that each edge of $T$ has non-trivial stabiliser. Then $T/G$ has at most $\left\lfloor \left(2\rank G - \frac{5}{2}\right)k \right\rfloor$ edges. If $G$ is torsion-free then this bound can be improved to $\left( 2 \rank G - 3 \right) k$.
\end{theorem}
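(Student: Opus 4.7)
The approach is to refine Weidmann's Stallings-fold argument, which already gives the bound $2k(\rank G - 1)$ and can be recovered by applying \thref{res:main_result} \ref{pt:main_P_closed} with $\mathcal{P} = \{1\}$ (so that $M = 1$, and the hypothesis ``every edge stabiliser is not in $\mathcal{P}$'' matches the standing assumption of nontrivial edge stabilisers). The first step is to reduce to the case where the action is reduced: collapsing an orbit of edges whose stabiliser coincides with a vertex stabiliser strictly decreases the edge count without affecting $\rank G$ or $k$-acylindricality, so it suffices to bound the edges of a reduced tree.

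The main work is a refined fold count. Pick a rose $R$ with $n = \rank G$ petals together with a $\pi_1$-surjective morphism $R \to T/G$ and run Stallings folds. Each elementary fold either drops the first Betti number of the source by $1$ (rank-reducing) or preserves it (rank-preserving). The interaction of the forest of influence with folds developed in Section~\ref{sec:influence} shows that between two consecutive rank-reducing folds the number of rank-preserving folds with a common edge stabiliser is at most $2k$. Charging edges appropriately recovers the Weidmann bound of $2k(n-1)$.

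To sharpen this, note that since $T$ is minimal and reduced and $G$ is noncyclic, the image of $R$ in $T/G$ must contain a vertex where two distinct petals first meet. Analysing the last fold that creates this identification shows that the chain of rank-preserving folds adjacent to at least one rank drop has length at most $\frac{3k}{2}$ rather than $2k$, saving $\frac{k}{2}$ edges overall and yielding $\left(2n - \frac{5}{2}\right)k$; the floor accounts for the case when $k$ is odd. In the torsion-free case the acylindricality constraint is effectively two-sided because no nontrivial element can half-reverse a long chain, so the short chain has length at most $k$, doubling the saving to $k$ and giving $(2n-3)k$.

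The main obstacle will be making the ``one short chain'' claim precise and verifying it in every possible configuration of folds, not merely the generic one; this requires a careful case analysis of how folds can align near the rank-dropping fold in question, and in the general case it is exactly the possibility of order-$2$ stabilisers acting as half-turns that prevents one from simply saving the full $k$. Sharpness of both bounds is then established in Section~\ref{sec:sharp} via explicit constructions realising them.
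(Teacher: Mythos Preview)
Your overall strategy is right---refine the Weidmann fold argument by locating a single ``inefficient'' step where the weight drops by $2$ but only $\lfloor 3k/2\rfloor$ (resp.\ $k$) orbits of edges need to be collapsed, then apply the $2k(\rank G-1)$ bound to the remaining tree. This is exactly what the paper does. However, your identification of \emph{where} this inefficiency occurs is not correct, and this is the heart of the argument.

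The paper does not look for ``a vertex where two distinct petals first meet'' or the ``last fold creating an identification''. The relevant moment is the first fold after which some vertex stabiliser becomes \emph{non-cyclic}; until then one maintains seed vertices with trivial connecting groups and cyclic vertex stabilisers (\thref{res:cyclic_inefficiency}). At that fold, say identifying $y_1,y_2$ with seed vertices $u_1,u_2$, the key observation is that since $\stab u_i$ is still cyclic, there is $g_i\in\stab u_i$ fixing no edge of the branch $f'_i$, so $f'_i\cup g_i f'_i$ is fixed by $\stab y_i$ and hence has length $\le k$; thus each $f'_i$ has length $\le k/2$. When $u_1$ and $u_2$ are in the same orbit an extra arc $\tilde f$ appears, and $f'_1\cup\tilde f$ is fixed by a nontrivial element, giving the bound $k+k/2=3k/2$. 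Your phrase ``chains of rank-preserving folds'' does not capture this; the saving comes from the cyclic-stabiliser reflection trick, not from a topological constraint on how petals meet.

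Your explanation of the torsion-free improvement is also off. It is not about ``half-turns'' or order-$2$ elements specifically. The point is that when $\stab u_1\cong\mathbb{Z}$, the two nontrivial subgroups $\stab y_1$ and $h^{-1}(\stab y_2)h$ of it automatically have nontrivial intersection, so the \emph{entire} path $f'_1\cup\tilde f\cup h^{-1}f'_2$ is fixed and has length $\le k$. For finite cyclic $\stab u_1$ (of any order, not just $2$) these two subgroups may intersect trivially, and one only gets $f'_1\cup\tilde f$ fixed, whence $3k/2$.

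Finally, your preliminary reduction to the case where $T$ is reduced is unnecessary and the paper explicitly avoids it; the hypotheses ``$k$--acylindrical'' and ``no trivial edge stabilisers'' already prevent unbounded subdivision, and the argument goes through for minimal $T$ directly.
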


\begin{theorem}\thlabel{res:sharp_example}
For any $k > 0$ and $r \geq 2$ there is a finitely presented group $G$ with $\rank G = r$ which acts $k$--acylindrically on a minimal tree $T$ where each edge of $T$ has non-trivial stabiliser and $T/G$ has exactly $\left\lfloor \left(2\rank G - \frac{5}{2}\right)k \right\rfloor$ edges. 

Similarly $F_r$ admits a $k$--acylindrical action on a minimal tree $T$ where each edge of $T$ has non-trivial stabiliser and $T/F_r$ has exactly $\left(2r - 3 \right)k$ edges.
\end{theorem}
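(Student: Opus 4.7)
The plan is to prove the theorem by explicit construction, producing graphs of groups whose Bass-Serre trees realise both bounds of \thref{res:sharp_bound}. Two constructions are required: one whose fundamental group is $F_r$ achieving $(2r-3)k$ edges in the quotient, and one with a finitely presented rank-$r$ group admitting torsion achieving $\lfloor(2r - 5/2)k\rfloor$ edges. The strategy is to saturate each inequality used in the proof of \thref{res:sharp_bound}, so that equality holds throughout.

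For the $F_r$ construction I would build a graph of groups $\mathbb{G}$ whose underlying graph $\Lambda$ is a chain of $r-1$ ``blocks'' glued at endpoints: $r-2$ blocks of $2k$ edges and one final block of $k$ edges, giving $|E(\Lambda)| = (r-2)(2k) + k = (2r-3)k$. Each block would use cyclic vertex and edge groups arranged so that (i) the block collapses to contribute a single $\mathbb{Z}$-factor to $\pi_1(\mathbb{G})$, making $\pi_1(\mathbb{G}) \cong F_r$; and (ii) the edge stabilisers rotate their cyclic generator every $k$ edges within a block. Then any non-trivial element fixing $k+1$ consecutive edges of the Bass-Serre tree would lie in the intersection of two cyclic subgroups generated by non-commensurable elements of a free group, which is trivial, giving $k$--acylindricality. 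The action would be minimal provided every valence-one vertex of $\Lambda$ has vertex group strictly containing its edge group; this can be arranged by overlapping generators between adjacent blocks.

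For the torsion construction I would modify the $F_r$ example by replacing one cyclic vertex group with one of the form $\mathbb{Z} \oplus \mathbb{Z}/2\mathbb{Z}$ and extending an adjacent block by $\lfloor k/2 \rfloor$ edges, with the order-two element appearing in the stabilisers of the new edges. The additional $\lfloor k/2 \rfloor$ edges correspond precisely to the $k/2$ slack between the two bounds of \thref{res:sharp_bound}; they can be inserted without violating $k$--acylindricality by a careful choice of how the torsion element acts on the extended block, ensuring that no non-trivial element fixes a reduced path of length $k+1$. The rank remains $r$ because the torsion generator replaces one of the free generators in a minimal generating set, yielding exactly $\lfloor(2r - 5/2)k\rfloor$ edges as claimed.

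The hard part will be balancing three constraints simultaneously: (a) $\pi_1(\mathbb{G})$ must have rank \emph{exactly} $r$, since non-trivial edge identifications tend to kill generators (pushing rank down) while minimality can force extra generators into vertex groups (pushing rank up); (b) $k$--acylindricality must hold, despite cyclic edge subgroups of a free group having non-trivial cyclic intersections that can propagate across many edges unless generators are rotated carefully; and (c) the action must be minimal. Verifying $k$--acylindricality will be the most delicate step, requiring careful tracking of stabiliser intersections along every reduced edge path in the Bass-Serre tree, with the counting having to match the floor-function numerology of the bound precisely.
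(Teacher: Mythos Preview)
Your overall plan—explicit graph-of-groups constructions saturating the bounds of \thref{res:sharp_bound}—is correct and matches the paper's strategy, but your specific constructions differ and your torsion mechanism has a gap.

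The paper does not build a chain. It starts from a rose: $r-1$ petals labelled by stable letters $h_1,\dots,h_{r-1}$ based at a vertex carrying a cyclic group $\langle a\rangle$, so $\pi_1=\langle a\rangle * F_{r-1}$ and the rank is visibly $r$. It then subdivides petals and applies type~II folds to push cyclic subgroups along the edges; since these operations preserve $\pi_1$, the rank computation is automatic and only $k$--acylindricality needs checking. Your chain of segments with purely cyclic vertex and edge groups is an iterated amalgam of copies of $\mathbb{Z}$ over proper cyclic subgroups, and such a group is never free of rank $\geq 2$; to obtain $F_r$ you need either loops in the underlying graph (which is what the rose supplies) or non-cyclic vertex groups at the block joints. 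The paper in fact acquires a rank-$2$ free vertex group $\langle b,c\rangle$ at the centre of the $h_1$-petal after folding, but this comes from the loop, not from the initial data.

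The torsion step is where the real problem lies. A vertex group $\mathbb{Z}\oplus\mathbb{Z}/2\mathbb{Z}$ has rank~$2$. If you adjoin an order-$2$ element $t$ commuting with one of your existing cyclic generators $z$, Grushko pushes the rank to $r+1$; if instead you compensate by dropping a free generator, you are left with only $r-2$ petals and cannot reach the required edge count. (And a group of the shape $(\mathbb{Z}/2\mathbb{Z})*F_{r-1}$, which does have rank $r$, contains no copy of $\mathbb{Z}\oplus\mathbb{Z}/2\mathbb{Z}$ at all, since centralisers of torsion in a free product lie in conjugates of the factors.) The paper's device is to take $G=(\mathbb{Z}/pq\mathbb{Z})*F_{r-1}$ with $p,q$ distinct primes: the torsion factor $\langle a\rangle$ is cyclic, hence contributes rank~$1$, yet it contains two non-trivial subgroups $\langle a^p\rangle$ and $\langle a^q\rangle$ with trivial intersection. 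The $h_1$-petal is then built from three consecutive segments with edge groups $\langle a\rangle$, $\langle a^p\rangle$, $\langle a^q\rangle$ of lengths $\lceil k/2\rceil$, $\lfloor k/2\rfloor$, $\lfloor k/2\rfloor$ (total $\lfloor 3k/2\rfloor$), and because $\langle a^p\rangle\cap\langle a^q\rangle=1$ the fixed subtree of every non-trivial power of $a$ has diameter exactly~$k$. A rank-$1$ cyclic group admitting two trivially-intersecting non-trivial subgroups is precisely the mechanism you are missing.
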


\begin{remark}
Unlike in the previous results there is no requirement that $T$ needs to be reduced. Instead the conditions that $T$ is $k$--acylindrical and has no edges with trivial stabiliser are enough to completely prevent the unrestricted edge subdivision which motivated the definition of a reduced action.
\end{remark}

\section{Stallings folds}\label{sec:folds}

The idea of a fold will be of vital importance. Recall the following.

\begin{definition}
Let $G$ act on a tree $T$. Let $e_1$, $e_2$ be distinct edges with a common endpoint $x$ and let $\phi:e_1 \rightarrow e_2$ be the linear map which leaves $x$ fixed. Let $\sim$ be the minimal equivalence relation on $T$ such that $x \sim \phi(x)$ for each $x \in e_1$ and such that $T/\sim$ is a naturally a tree on which $G$ acts. A \emph{fold} is the map $T \rightarrow T/\sim$.
\end{definition}

Folds were introduced by Stallings in \cite{Stallings}. In particular they showed that maps between trees with finitely generated edge stabilisers can be decomposed into a finite sequence of folds. We need something similar for trees whose edge groups need not be finitely generated. Fortunately we can ``add in'' generators of each edge group one at a time, then take a limit to see that our maps are a composition of (potentially infinitely many) folds. This is formalised into the following theorem, the proof of which is largely the same as the one given in \cite[p.455]{BestvinaFeighn} with changes to deal with the fact that the edge stabilisers aren't necessarily finitely generated.

\begin{theorem}\thlabel{res:stallings_folding_thm}
Let $G$ be a countable group. Suppose $\Psi: S \rightarrow T$ is a surjective simplical equivariant map between trees which $G$ acts on with $S/G$ finite and where no edge of $S$ gets mapped to a point by $\Psi$. Then $\Psi$ can be viewed as a (possibly infinite) composition of folds. i.e. $\Psi = \cdots\alpha_2\alpha_1$ where each $\alpha_i$ is an (orientation preserving) fold.
\end{theorem}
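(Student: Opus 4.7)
The plan is to mimic the folding argument of Bestvina and Feighn \cite{BestvinaFeighn}, with an added limiting step to handle infinitely generated edge stabilisers. In the case where every edge stabiliser of $S$ and $T$ is finitely generated one defines a complexity — say, the number of edges of $S/G$ together with the total number of generators of all edge stabilisers of $S$ — and observes that so long as $\Psi$ fails to be an isomorphism there must be two distinct adjacent edges $e_1, e_2$ of $S$ with a common endpoint and $\Psi(e_1) = \Psi(e_2)$. Such a pair gives a fold $\alpha_1 : S \to S_1$ and a factorisation $\Psi = \Psi_1 \circ \alpha_1$, and the complexity strictly drops under the fold, so the process terminates after finitely many folds.

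In our generality this complexity may be infinite, so the idea is to approximate instead. Since $G$ is countable and $S/G$ is finite, both the set of $G$-orbits of adjacent edge pairs $(e_1, e_2)$ of $S$ with $\Psi(e_1) = \Psi(e_2)$ and the generators of each edge stabiliser of $T$ form countable sets; these are enumerated in a fair, interleaved order. One then constructs inductively trees $S = S_0, S_1, S_2, \ldots$ together with equivariant simplicial surjections $\Psi_n : S_n \to T$ with $\Psi = \Psi_n \circ \alpha_{n-1} \circ \cdots \circ \alpha_0$; at stage $n$, carry out the fold $\alpha_n$ prescribed by the next entry of the enumeration (if that entry still represents a genuine fold pair in $S_n$), or take $\alpha_n = \id$ otherwise.

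Let $S_\infty = \varinjlim S_n$, with induced equivariant simplicial map $\Psi_\infty : S_\infty \to T$. Surjectivity of $\Psi_\infty$ is inherited from each $\Psi_n$. Injectivity is the crux: one needs any two points of $S$ with the same $\Psi$-image to become identified in some $S_n$. This is arranged by the fairness of the enumeration, using the fact that the equivalence relation on $S$ of having the same $\Psi$-image is generated, as a $G$-invariant relation, by the elementary fold identifications that appear in the various $S_n$. A surjective simplicial equivariant map between trees with no remaining fold pair (that is, an immersion) must be an isomorphism, so $\Psi_\infty$ is a $G$-equivariant isomorphism. Discarding the identity steps expresses $\Psi$ as a (possibly infinite) composition of folds.

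The main obstacle lies in the bookkeeping: a fold performed at stage $n$ can create new fold pairs in $S_{n+1}$ that did not come from fold pairs in $S$, so the list of identifications to perform cannot simply be fixed in advance but must be updated on the fly. Setting up the enumeration so that every identification forced by $\Psi$ is carried out at some finite stage is the technical heart of the argument and is precisely where the proof departs from the finitely generated case treated in \cite{BestvinaFeighn}.
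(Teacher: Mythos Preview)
Your outline is in the right direction, and the paper's argument is indeed a variant of Bestvina--Feighn's, but you have essentially named the gap yourself without closing it. You say the fairness of the enumeration is what makes $\Psi_\infty$ injective, and then in the final paragraph you concede that folds create new fold pairs and that arranging the enumeration is ``the technical heart of the argument''. That is exactly the point that needs a concrete mechanism, and your proposal does not supply one: a list of fold pairs present in $S_0$ is not sufficient, and ``update on the fly'' is a description of the difficulty rather than a solution to it. Your mention of generators of edge stabilisers of $T$ hints at the right fix but is never tied into the fold procedure.

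The paper avoids this bookkeeping problem by \emph{not} enumerating fold pairs at all. It first records a finite fact $(\star)$: any simplicial surjection between finite trees (no collapsed edges) factors into finitely many folds. It then organises the process around \emph{target data} rather than fold operations. First, apply $(\star)$ to a closed fundamental domain $F\subseteq S$ so that after finitely many folds the quotient graphs agree. Second, enumerate the countable set of pairs $(v,g)$ with $v$ a vertex of $K=\Psi(F)$ and $g\in\stab(v)$; for each such pair, take a preimage $v_i$ of $v$ in the current tree and apply $(\star)$ to the finite path from $v_i$ to $gv_i$. Each pair contributes finitely many folds, and after processing all pairs the induced map is a bijection on orbits of vertices with matching stabilisers, hence an isomorphism. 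Because the enumeration is over vertices and stabiliser elements in $T$---quantities that do not change as folds are performed---the proliferation problem you flag simply does not arise.
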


\begin{remark}
The codomain of $\cdots\alpha_2\alpha_1$ is $S/\sim$ where $\sim$ is the equivalence relation generated by all the $\alpha_i$. This is a tree which $G$ acts on in the obvious way with vertex and edge stabilisers equal to the natural direct limit of their preimages.
\end{remark}

\begin{remark}
The condition that no edge of $T$ gets collapsed is not a restrictive one in practice. In particular if $\Psi$ maps an edge of $S$ to a point then let $\pi$ be the map which collapses each edge of $S$ which is sent to a point by $\Psi$. Then there is a natural composition $\Psi = \Psi' \circ \pi$ where no edge in the domain of $\Psi'$ is sent to a point in $T$.
\end{remark} 

\begin{proof}
Throughout we'll let $\alpha_i$ fold the tree $S_{i-1}$ into the tree $S_i$. Also we let $\beta_i := \alpha_i \circ \cdots \circ \alpha_1$ and $\gamma_i$ be the map such that $\Psi = \gamma_i \circ \beta_i$. 

First suppose that we have a surjective simplical map $m: A \rightarrow B$ between finite trees where no edge gets mapped to a point. Claim that $m$ can be considered to be a finite series of folds; a fact we shall refer to as $(\star)$. If $m$ is injective then this is trivial. Otherwise we have distinct vertices $x,y \in A$ with $m(x) = m(y)$. Let $p$ be the reduced edge path from $x$ to $y$. Since every edge of $A$ is mapped to an edge in $B$ and $B$ is a tree we see that there must be a vertex $z \in p$ such that $m |_p$ is not locally injective at $z$. Let $e_1$ and $e_2$ be the edges in $p$ which contain $z$ as an endpoint and observe that $m(e_1) = m(e_2)$. Thus $m$ factors though the fold with edges $e_1$ and $e_2$. Repeat this process until the map is injective, which must happen as the number of edges is finite and decreasing at each stage. This completes the decomposition of $m$ into folds. 

Now suppose we have an equivarient simplical map $\delta: R \rightarrow R'$. Let $A$ be a finite subtree of $R$. We can apply $(\star)$ to $\delta |_{A}$ to obtain a finite series of folds $\delta': R \rightarrow R''$ which factors through $\delta$ and where the corresponding $\delta'': R'' \rightarrow R'$ is injective on $\delta'(A)$. This is how we will apply $(\star)$ in practice. 

Our initial folds of $\Psi$ will be to set $S_i/G$ isomorphic as a graph to $T/G$ for all sufficiently large $i$. Let $F$ be the closure of a fundamental domain of $S = S_0$. We now use $(\star)$ to find a series of folds $\alpha_1, \cdots, \alpha_N$ such that $\gamma|_{\beta(F)}$ is a homeomorphism onto its image. Thus $\gamma_N / G$ is a homeomorphism of graphs. 

Let $K = \Psi(F)$. Let $v$ be a vertex in $K$ and $g \in \stab(v)$. Let $v_i$ be a preimage of $v$ in $F_i$ and let $p_i$ be the reduced edge path from $v_i$ to $gv_i$. Now we apply $(\star)$ to $p_i$ in order to get folds $\alpha_{i}, \cdots \alpha_{j}$ which get $g$ in the relevant vertex group in $S_j/G$. Now repeat this process for each vertex $v \in K$ and $g \in \stab(v)$. This potentially gives an infinite sequence of folds as $K$ has finitely many vertices and each $\stab(v)$ is countable. 

Now let $\tilde{\gamma}$ be the map such that $\Psi = \tilde{\gamma} \circ (\cdots \circ \alpha_2 \circ \alpha_1)$. Claim that $\tilde{\gamma}$ is a homeomporphism. Indeed by construction we see that $\tilde{\gamma}$ induces a bijection between the orbits of vertices; moreover the stabiliser of each vertex is the same as that of its image. Hence $\tilde{\gamma}$ induces a bijection between the vertices and hence is a homeomorphism between trees. Thus $\cdots \circ \alpha_2 \circ \alpha_1$ is a decomposition of $\Psi$ into folds. $\square$
\end{proof}

\begin{remark}
It should be straightforward to extend this result to the case where $G$ is uncountable and where $S/G$ is not necessarily finite using the well ordering principle. We do not do this here because it is unnecessary to prove our main results.
\end{remark}

%\begin{remark}
%It is possible to insist that the decomposition given by Stallings' folding theorem (\thref{res:stallings_folding_thm}) is finite as long we replace folds with \emph{generalised folds} CITE. We don't do so here because this infinite chain causes no real extra difficulty to work with and because the present author believes there is some value in having both approaches in the literature. We note that all arguments in this paper involving folds work just as well for generalised folds. 
%\end{remark}

In general a simplicial map is too restrictive of a notion. As such we now introduce the idea of a combinatorial map.

\begin{definition}
A \emph{combinatorial map} $\Psi: S \rightarrow T$ is a $G$--equivariant map where each vertex gets sent to a vertex and each edge $e = [u,v]$ gets sent to the reduced edge path from $\Psi(u)$ to $\Psi(v)$. 
\end{definition}

Observe that a combinatorial map can be viewed as a simplical map after subdividing edges in the domain. As such \thref{res:stallings_folding_thm} applies to combinatorial maps as long as we subdivide first. 

The following will be a useful shorthand.

\begin{definition}
Suppose that $\Psi: T \rightarrow T'$ is a combinatorial map. We say that another combinatorial map $\alpha: R \rightarrow R'$ \emph{factors through $\Psi$} if there is some $\beta: T \rightarrow R$ and $\gamma: R' \rightarrow T'$ where $\Psi = \gamma \circ \alpha \circ \beta$.
\end{definition}

Depending which of the vertices and edges are in common $G$--orbits there are a few different cases that can arise from a fold. The following classification of folds is the same as the one found in \cite{BestvinaFeighn}. 

First we make the distinction between whether $x$ is in the same $G$ orbit as one of the $y_i$. If $x$ is not in the same $G$--orbit as either $y_i$ we say that the fold is of type A. Otherwise WLOG we have $gx = y_1$ for some $g \in G$ and we say that the fold is of type B. Note that such a $g$ must act hyperbolicily on $T$, (with translation length $1$,) as it moves a vertex an odd distance. 

Additionally we split each of these cases into three additional categories. We say the fold is of type I if $y_1$ and $y_2$ are in distinct orbits of $G$. We say the fold is of type II if $e_1$ and $e_2$ are in a common orbit of $G$. Finally we say the fold is of type III if $y_1$ and $y_2$ are in a common $G$--orbit, but $e_1$ and $e_2$ are not. We will now go into the specifics of each type of fold. Throughout we let $e_i$ be the vertex between the vertices $x$ and $y_i$ and use capital letters to denote the group associated to the corresponding vertex or edge. 

\begin{remark}
The following diagrams represent what happens to the relevant subgraph of a particular graph of groups decomposition. Crucially the pictures for type I and III folds only give the correct groups if both $e_1$ and $e_2$ are in the fundamental domain for this decomposition. In general we need to conjugate certain groups in the decomposition before these pictures become accurate. 
\end{remark}

\begin{proof}[Type I]
We have $y_1$ and $y_2$ in distinct orbits of $G$. In this case the number of vertices and the number of edges of the graph of groups decomposition both decrease by one so the Euler characteristic of the underlying graph stays the same.  

\begin{figure}[h!]
\centering
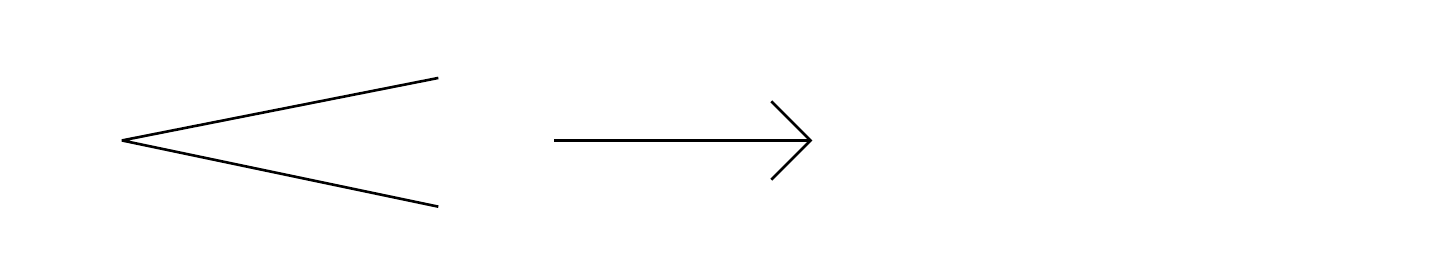 \\
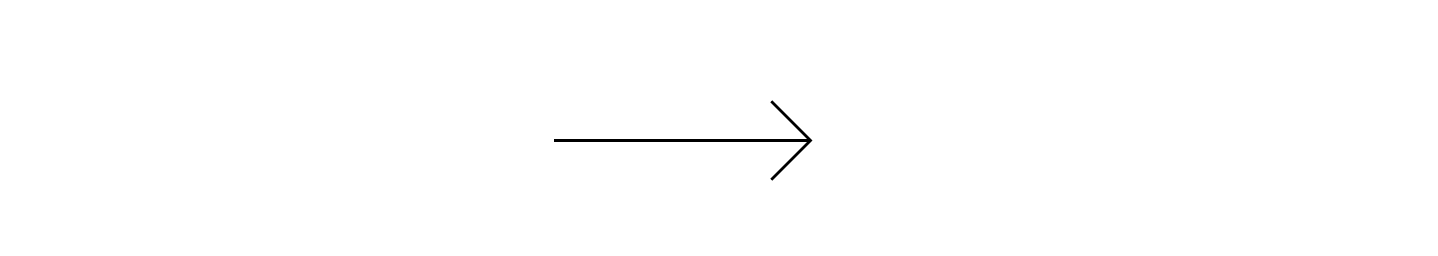 
\caption{A typical example of the effects of a type I fold on a graph of groups. The vertices $y_1$ and $y_2$ are inequivalent so the fold reduces the number of vertices by $1$. Likewise for the edges $e_1$ and $e_2$.}
\end{figure}
\end{proof}

\begin{proof}[Type II]
We have $e_1$ and $e_2$ in a common orbit of $G$, suppose that $he_1 = e_2$. Observe that if $h$ acts hyperbolicily on $T$ then the action of $G$ after the fold is not orientation preserving and so we will ignore this case. Thus we can assume that $h \in X$. In this case the underlying graph of the graph of groups decomposition doesn't change. Instead the element $h$ gets ``pulled'' along the edge in the graph of groups decomposition. 

\begin{figure}[h!]
\centering
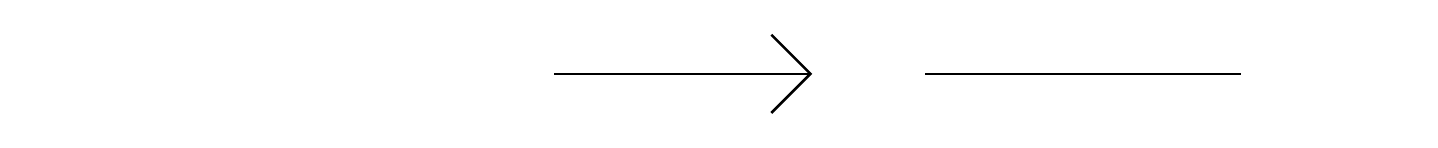 \\
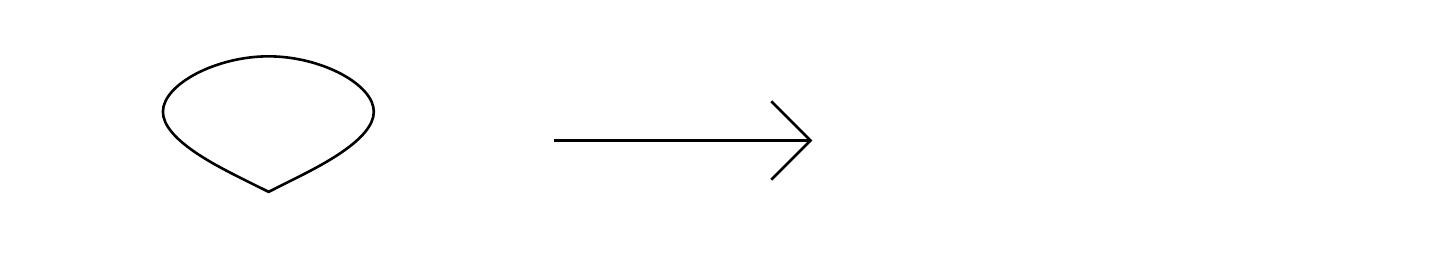  
\caption{A typical example of the effects of a type II fold on a graph of groups. The vertices $y_1$ and $y_2$ are equivalent so the fold keeps the number of vertices the same. Likewise for the edges $e_1$ and $e_2$.}
\end{figure}
\end{proof}

\begin{proof}[Type III]
We have $y_1$ and $y_2$ in a common orbit, but $e_1$ and $e_2$ are not. Suppose that $hy_1 = y_2$. Observe that $h$ has to act hyperbolicily on $T$ with translation length $2$. After the fold this $h$ now fixes the image of $y_1$ and $y_2$ thus no longer acts hyperbolicily. This type of fold reduces the number of edges of the graph of groups by one while keeping the number of vertices fixed. Thus the Euler characteristic  
of the underlying graph increases by one.

\begin{figure}[h!]
\centering
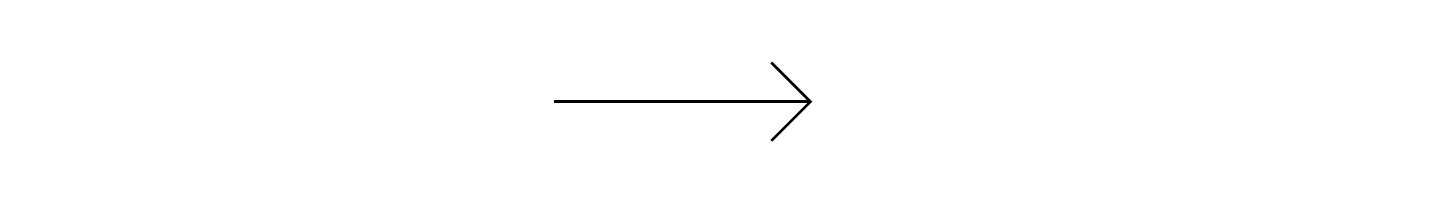 \\
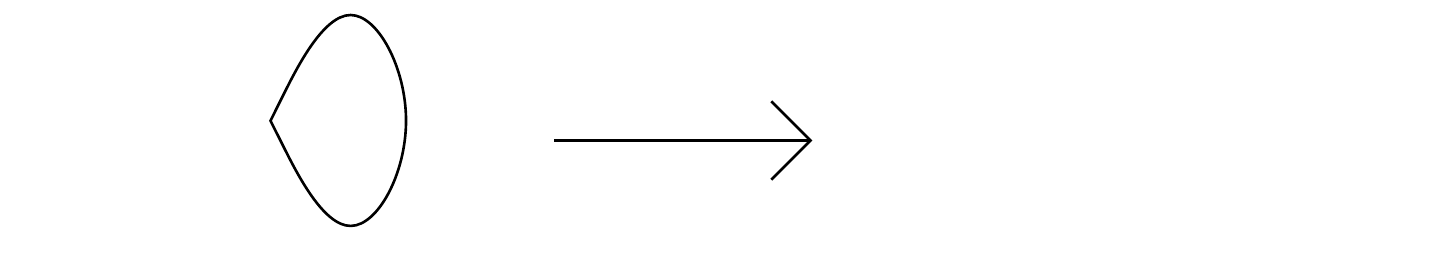 
\caption{A typical example of the effects of a type III fold on a graph of groups. The vertices $y_1$ and $y_2$ are equivalent so the fold keeps the number of vertices the same. However the edges $e_1$ and $e_2$ are inequivalent so the fold reduces the number of edges by $1$.}
\end{figure}
\end{proof}

\section{An inaccessible example}\label{sec:example}

In \cite{BestvinaFeighn} Bestvina and Feighn showed that a reduced tree with small edge stabilisers has a bound on the number of edges as long as the underlying group is (almost) finitely presented. Recall the following.

\begin{definition}
A group $G$ is \emph{large} if it acts on a tree $T$ hyperbolicly. That is to say that there are two elements $g_1, g_2 \in G$ which don't fix a point of $T$ and their axes (lines of minimal displacement) have bounded intersection \cite{Serre_trees}. A group is \emph{small} if it's not large.
\end{definition}

Note that the ping-pong lemma implies that any large group must contain $F_2$ as a subgroup. The converse is not true; for example $SL_3(\mathbb{Z})$ contains many subgroups isomorphic to $F_2$ and has Serre's property (FA) \cite{Serre_trees}, so any tree it acts on has fixed point. 

Later Weidmann \cite{Weidmann_kC} showed that the action of a finitely generated group acting $(k,C)$--acylindrically on a $C$--partially reduced tree also has a bound on the number of edges depending only on the rank of the group and $k$. In the same paper Weidmann then goes on to conjecture that some sort of common generalisation between their result and the aforementioned result of Bestvina and Feighn might exist. More precisely they suggest it should be possible to give a positive answer to the following using known techniques.

\noindent \textbf{Question \cite[pg.213]{Weidmann_kC}} Given a finitely presented group $G$ and $k>0$ is there some $C(G,k)$ such that any reduced action of $G$ which is $k$--acylindrical on large subgroups has at most $C(G,k)$ orbits of edges?

The purpose of this section is to construct an example which shows that the answer to the above question is no. In fact we will construct a counterexample with even stronger properties.

\begin{theorem}\thlabel{res:strongCE}
There is a finitely presented group $G$ which for any $N > 0$ acts on a reduced tree which is $1$--acylindrical on infinite subgroups and has $N$ orbits of edges. 
\end{theorem}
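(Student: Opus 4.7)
The plan is to exhibit a single finitely presented group $G$ together with, for each $N>0$, a graph of groups decomposition whose Bass-Serre tree $T_N$ is reduced, has exactly $N$ orbits of edges, and supports a $G$-action that is $1$-acylindrical on infinite subgroups. By the Dunwoody-Bestvina-Feighn accessibility results, a finitely presented group cannot possess arbitrarily many orbits of edges in a reduced splitting whose edge stabilizers are small, so the essential feature of the construction must be the presence of edges with \emph{large} edge stabilizers. The $1$-acylindricity hypothesis on infinite subgroups then forces each such large stabilizer to fix at most one edge, so the $N$ edges produce $N$ pairwise non-conjugate large subgroups appearing as edge stabilizers.

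My strategy is to endow $G$ with a self-similar splitting structure. Concretely, I would look for a finitely presented $G$ containing a distinguished large subgroup $H$, along with an infinite family of pairwise non-conjugate large subgroups $H_1, H_2, \ldots$ of $G$ with pairwise finite intersections, each of which plays the role of an edge stabilizer in some HNN-type splitting of $G$. For fixed $N$ one assembles $\mathcal{G}_N$ by refining a single vertex group $N$ times, each step being a splitting along a new $H_i$ with extra branches attached to keep every newly created vertex of the quotient of valence at least three, thereby ensuring reducedness by design. The $1$-acylindricity on infinite subgroups is then immediate: any infinite subgroup fixing two consecutive edges lies in the intersection of two distinct $H_i$'s, which is finite by construction.

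The main obstacle is reconciling the existence of arbitrarily many such splittings with the finite presentability of $G$. Dunwoody's inaccessible examples from \cite{Dunwoody_inaccess} are only finitely generated, and the resolution lemma forbids analogous phenomena for finitely presented groups once one restricts to small edge stabilizers. The key insight enabling a finitely presented counterexample is that the complexity measured by the resolution lemma is transported through small edge stabilizers only: large edge stabilizers can carry arbitrarily rich internal structure without violating the lemma, so packaging all the combinatorial variability of the splittings into a family of large edge groups avoids any conflict with finite presentability of $G$ itself. Actually producing a single finitely presented $G$ that houses the family $\{H_i\}$ while genuinely admitting each $\mathcal{G}_N$ as a graph of groups decomposition is the technical heart of the section, and is where I would expect the construction to demand the most care.
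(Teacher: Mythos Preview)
Your proposal is a strategy outline, not a proof, and you concede as much in the final sentence: the construction of $G$ and the verification that each $\mathcal{G}_N$ is a decomposition of that fixed $G$ is ``the technical heart'', and you have not attempted it. Since that construction is the entire content of the theorem, the gap is total. Moreover, parts of the sketch would not work as stated: iteratively ``refining a vertex group'' by HNN splittings does not obviously keep the ambient group equal to $G$ unless you arrange the vertex groups to recur, and ``attaching extra branches'' to force valence at least three would change the fundamental group of the graph of groups, so reducedness cannot be obtained that way.

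The paper's construction is concrete and rather different from what you envisage. One takes $G = A * B$ with $B \cong \mathbb{Z}$ and $A$ any finitely presented group containing the dyadic group $D = \mathbb{Z}[\tfrac12]/\mathbb{Z}$ (for instance Thompson's group $T$). The point is that $D$ contains an infinite chain of finite cyclic subgroups $\langle a_1\rangle \subset \langle a_2\rangle \subset \cdots$. For each $N$ one subdivides the single edge of the free-product splitting $A*B$ into $N$ subedges and then performs explicit type~II folds, pulling the $a_i$ across from the $A$-side and certain carefully chosen elements $b_i$ (built from $b$ and the $a_j$) from the $B$-side. The resulting edge and vertex groups have the shape $(\mathbb{Z}/2^r\mathbb{Z}) * \mathbb{Z}$, hence are large; reducedness holds because each edge-to-vertex inclusion is proper; and $1$-acylindricity on infinite subgroups is verified by a normal-form calculation in $(\mathbb{Z}/2^r\mathbb{Z}) * \mathbb{Z}$ showing that any two adjacent edge stabilizers in the Bass--Serre tree intersect in a finite cyclic group. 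The unbounded chain inside $D$ is exactly what permits arbitrarily many edges, and this mechanism is what motivates the finite-height hypothesis on $\mathcal{P}$ in the paper's positive results.
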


\begin{proof}
Let $D := \left\langle a_1, a_2, \cdots \: | \: a_1^2 = 1, \: a_{i+1}^2 = a_i \:\: \forall i \geq 1 \right\rangle \cong \mathbb{Z}\left[ \frac{1}{2} \right] / \mathbb{Z}$; the additive group of dyadic rationals modulo $\mathbb{Z}$. Let $A$ be any finitely presented group into which $D$ embeds; for example we can take $A$ to be Thompson's group $T$ \cite{ThompsonGroup}. Let $B := \left\langle b \right\rangle \cong \mathbb{Z}$. Take $G := A*B$ and pick any $N > 0$. Start by taking the one edge splitting corresponding to $G \cong A * B$ and subdividing this edge into $N$ subedges. (In the diagrams we take $N = 4$.) 

\noindent%% Creator: Inkscape inkscape 0.92.4, www.inkscape.org
%% PDF/EPS/PS + LaTeX output extension by Johan Engelen, 2010
%% Accompanies image file '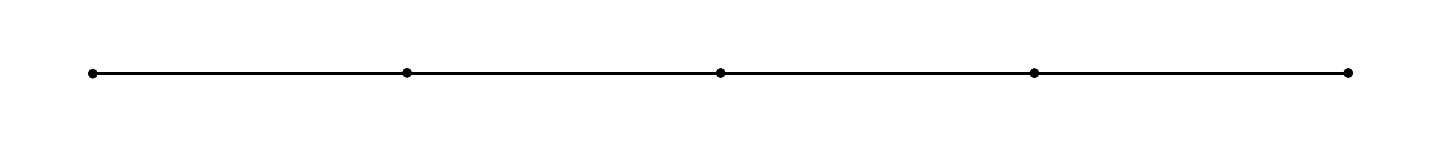' (pdf, eps, ps)
%%
%% To include the image in your LaTeX document, write
%%   \input{<filename>.pdf_tex}
%%  instead of
%%   \includegraphics{<filename>.pdf}
%% To scale the image, write
%%   \def\svgwidth{<desired width>}
%%   \input{<filename>.pdf_tex}
%%  instead of
%%   \includegraphics[width=<desired width>]{<filename>.pdf}
%%
%% Images with a different path to the parent latex file can
%% be accessed with the `import' package (which may need to be
%% installed) using
%%   \usepackage{import}
%% in the preamble, and then including the image with
%%   \import{<path to file>}{<filename>.pdf_tex}
%% Alternatively, one can specify
%%   \graphicspath{{<path to file>/}}
%% 
%% For more information, please see info/svg-inkscape on CTAN:
%%   http://tug.ctan.org/tex-archive/info/svg-inkscape
%%
\begingroup%
  \makeatletter%
  \providecommand\color[2][]{%
    \errmessage{(Inkscape) Color is used for the text in Inkscape, but the package 'color.sty' is not loaded}%
    \renewcommand\color[2][]{}%
  }%
  \providecommand\transparent[1]{%
    \errmessage{(Inkscape) Transparency is used (non-zero) for the text in Inkscape, but the package 'transparent.sty' is not loaded}%
    \renewcommand\transparent[1]{}%
  }%
  \providecommand\rotatebox[2]{#2}%
  \newcommand*\fsize{\dimexpr\f@size pt\relax}%
  \newcommand*\lineheight[1]{\fontsize{\fsize}{#1\fsize}\selectfont}%
  \ifx\svgwidth\undefined%
    \setlength{\unitlength}{413.85826772bp}%
    \ifx\svgscale\undefined%
      \relax%
    \else%
      \setlength{\unitlength}{\unitlength * \real{\svgscale}}%
    \fi%
  \else%
    \setlength{\unitlength}{\svgwidth}%
  \fi%
  \global\let\svgwidth\undefined%
  \global\let\svgscale\undefined%
  \makeatother%
  \begin{picture}(1,0.10273973)%
    \lineheight{1}%
    \setlength\tabcolsep{0pt}%
    \put(0.03662987,0.3576232){\color[rgb]{0,0,0}\makebox(0,0)[lt]{\begin{minipage}{0.93116981\unitlength}\centering \end{minipage}}}%
    \put(0,0){\includegraphics[width=\unitlength,page=1]{Counterexample1.pdf}}%
    \put(0.0637779,0.0638342){\color[rgb]{0,0,0.50196078}\makebox(0,0)[t]{\lineheight{1.25}\smash{\begin{tabular}[t]{c}$A$\end{tabular}}}}%
    \put(0.28230374,0.0638342){\color[rgb]{0,0,0.50196078}\makebox(0,0)[t]{\lineheight{1.25}\smash{\begin{tabular}[t]{c}$1$\end{tabular}}}}%
    \put(0.50033517,0.0638342){\color[rgb]{0,0,0.50196078}\makebox(0,0)[t]{\lineheight{1.25}\smash{\begin{tabular}[t]{c}$1$\end{tabular}}}}%
    \put(0.71935539,0.0638342){\color[rgb]{0,0,0.50196078}\makebox(0,0)[t]{\lineheight{1.25}\smash{\begin{tabular}[t]{c}$1$\end{tabular}}}}%
    \put(0.93689247,0.0638342){\color[rgb]{0,0,0.50196078}\makebox(0,0)[t]{\lineheight{1.25}\smash{\begin{tabular}[t]{c}$B$\end{tabular}}}}%
    \put(0.83026505,0.02342433){\color[rgb]{0,0.50196078,0}\makebox(0,0)[t]{\lineheight{1.25}\smash{\begin{tabular}[t]{c}$1$\end{tabular}}}}%
    \put(0.61404769,0.02342433){\color[rgb]{0,0.50196078,0}\makebox(0,0)[t]{\lineheight{1.25}\smash{\begin{tabular}[t]{c}$1$\end{tabular}}}}%
    \put(0.39783026,0.02342433){\color[rgb]{0,0.50196078,0}\makebox(0,0)[t]{\lineheight{1.25}\smash{\begin{tabular}[t]{c}$1$\end{tabular}}}}%
    \put(0.17436041,0.02342433){\color[rgb]{0,0.50196078,0}\makebox(0,0)[t]{\lineheight{1.25}\smash{\begin{tabular}[t]{c}$1$\end{tabular}}}}%
  \end{picture}%
\endgroup%

Now we apply $N$ folds of type II. The first ``pulls'' $a_N$ across the first edge, the second ``pulls'' $a_{N-1}$ across the second edge and so on, so that the $i \textsuperscript{th}$ fold ``pulls'' $a_{N+1-i}$ across the $i \textsuperscript{th}$ edge. 

\noindent%% Creator: Inkscape inkscape 0.92.4, www.inkscape.org
%% PDF/EPS/PS + LaTeX output extension by Johan Engelen, 2010
%% Accompanies image file '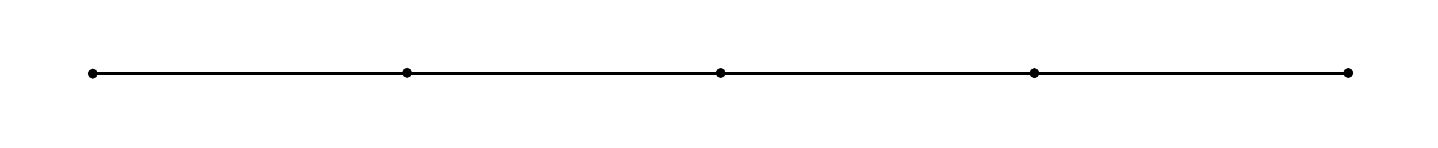' (pdf, eps, ps)
%%
%% To include the image in your LaTeX document, write
%%   \input{<filename>.pdf_tex}
%%  instead of
%%   \includegraphics{<filename>.pdf}
%% To scale the image, write
%%   \def\svgwidth{<desired width>}
%%   \input{<filename>.pdf_tex}
%%  instead of
%%   \includegraphics[width=<desired width>]{<filename>.pdf}
%%
%% Images with a different path to the parent latex file can
%% be accessed with the `import' package (which may need to be
%% installed) using
%%   \usepackage{import}
%% in the preamble, and then including the image with
%%   \import{<path to file>}{<filename>.pdf_tex}
%% Alternatively, one can specify
%%   \graphicspath{{<path to file>/}}
%% 
%% For more information, please see info/svg-inkscape on CTAN:
%%   http://tug.ctan.org/tex-archive/info/svg-inkscape
%%
\begingroup%
  \makeatletter%
  \providecommand\color[2][]{%
    \errmessage{(Inkscape) Color is used for the text in Inkscape, but the package 'color.sty' is not loaded}%
    \renewcommand\color[2][]{}%
  }%
  \providecommand\transparent[1]{%
    \errmessage{(Inkscape) Transparency is used (non-zero) for the text in Inkscape, but the package 'transparent.sty' is not loaded}%
    \renewcommand\transparent[1]{}%
  }%
  \providecommand\rotatebox[2]{#2}%
  \newcommand*\fsize{\dimexpr\f@size pt\relax}%
  \newcommand*\lineheight[1]{\fontsize{\fsize}{#1\fsize}\selectfont}%
  \ifx\svgwidth\undefined%
    \setlength{\unitlength}{413.85826772bp}%
    \ifx\svgscale\undefined%
      \relax%
    \else%
      \setlength{\unitlength}{\unitlength * \real{\svgscale}}%
    \fi%
  \else%
    \setlength{\unitlength}{\svgwidth}%
  \fi%
  \global\let\svgwidth\undefined%
  \global\let\svgscale\undefined%
  \makeatother%
  \begin{picture}(1,0.10273973)%
    \lineheight{1}%
    \setlength\tabcolsep{0pt}%
    \put(0.03662987,0.3576232){\color[rgb]{0,0,0}\makebox(0,0)[lt]{\begin{minipage}{0.93116981\unitlength}\centering \end{minipage}}}%
    \put(0,0){\includegraphics[width=\unitlength,page=1]{Counterexample2.pdf}}%
    \put(0.0637779,0.0638342){\color[rgb]{0,0,0.50196078}\makebox(0,0)[t]{\lineheight{1.25}\smash{\begin{tabular}[t]{c}$A$\end{tabular}}}}%
    \put(0.28230374,0.0638342){\color[rgb]{0,0,0.50196078}\makebox(0,0)[t]{\lineheight{1.25}\smash{\begin{tabular}[t]{c}$\langle a_4 \rangle$\end{tabular}}}}%
    \put(0.50033517,0.0638342){\color[rgb]{0,0,0.50196078}\makebox(0,0)[t]{\lineheight{1.25}\smash{\begin{tabular}[t]{c}$\langle a_3 \rangle$\end{tabular}}}}%
    \put(0.71935539,0.0638342){\color[rgb]{0,0,0.50196078}\makebox(0,0)[t]{\lineheight{1.25}\smash{\begin{tabular}[t]{c}$\langle a_2 \rangle$\end{tabular}}}}%
    \put(0.93689247,0.0638342){\color[rgb]{0,0,0.50196078}\makebox(0,0)[t]{\lineheight{1.25}\smash{\begin{tabular}[t]{c}$\langle a_1,b \rangle$\end{tabular}}}}%
    \put(0.83026505,0.02342433){\color[rgb]{0,0.50196078,0}\makebox(0,0)[t]{\lineheight{1.25}\smash{\begin{tabular}[t]{c}$\langle a_1 \rangle$\end{tabular}}}}%
    \put(0.61404769,0.02342433){\color[rgb]{0,0.50196078,0}\makebox(0,0)[t]{\lineheight{1.25}\smash{\begin{tabular}[t]{c}$\langle a_2 \rangle$\end{tabular}}}}%
    \put(0.39783026,0.02342433){\color[rgb]{0,0.50196078,0}\makebox(0,0)[t]{\lineheight{1.25}\smash{\begin{tabular}[t]{c}$\langle a_3 \rangle$\end{tabular}}}}%
    \put(0.17436041,0.02342433){\color[rgb]{0,0.50196078,0}\makebox(0,0)[t]{\lineheight{1.25}\smash{\begin{tabular}[t]{c}$\langle a_4 \rangle$\end{tabular}}}}%
  \end{picture}%
\endgroup%

Let $b_0 := b$ and for $1 \leq i \leq N$ we define $b_i := b_{i-1} a_{i} b^2_{i-1}$. We now apply $N-1$ folds in the opposite direction. The first ``pulls'' $b_1$ across the first edge, the second ``pulls'' $b_2$ across the second edge and so on, so that the $i \textsuperscript{th}$ fold ``pulls'' $b_i$ across the $i \textsuperscript{th}$ edge. 

\noindent%% Creator: Inkscape inkscape 0.92.4, www.inkscape.org
%% PDF/EPS/PS + LaTeX output extension by Johan Engelen, 2010
%% Accompanies image file '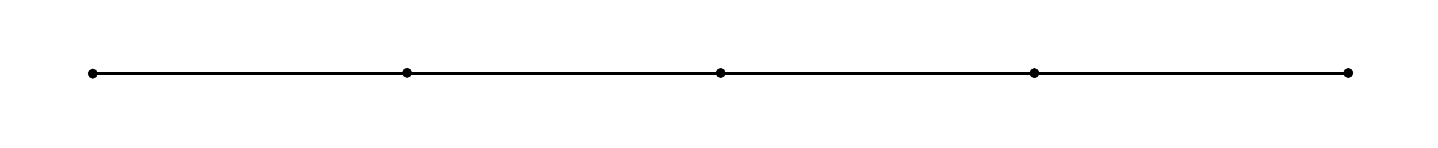' (pdf, eps, ps)
%%
%% To include the image in your LaTeX document, write
%%   \input{<filename>.pdf_tex}
%%  instead of
%%   \includegraphics{<filename>.pdf}
%% To scale the image, write
%%   \def\svgwidth{<desired width>}
%%   \input{<filename>.pdf_tex}
%%  instead of
%%   \includegraphics[width=<desired width>]{<filename>.pdf}
%%
%% Images with a different path to the parent latex file can
%% be accessed with the `import' package (which may need to be
%% installed) using
%%   \usepackage{import}
%% in the preamble, and then including the image with
%%   \import{<path to file>}{<filename>.pdf_tex}
%% Alternatively, one can specify
%%   \graphicspath{{<path to file>/}}
%% 
%% For more information, please see info/svg-inkscape on CTAN:
%%   http://tug.ctan.org/tex-archive/info/svg-inkscape
%%
\begingroup%
  \makeatletter%
  \providecommand\color[2][]{%
    \errmessage{(Inkscape) Color is used for the text in Inkscape, but the package 'color.sty' is not loaded}%
    \renewcommand\color[2][]{}%
  }%
  \providecommand\transparent[1]{%
    \errmessage{(Inkscape) Transparency is used (non-zero) for the text in Inkscape, but the package 'transparent.sty' is not loaded}%
    \renewcommand\transparent[1]{}%
  }%
  \providecommand\rotatebox[2]{#2}%
  \newcommand*\fsize{\dimexpr\f@size pt\relax}%
  \newcommand*\lineheight[1]{\fontsize{\fsize}{#1\fsize}\selectfont}%
  \ifx\svgwidth\undefined%
    \setlength{\unitlength}{413.85826772bp}%
    \ifx\svgscale\undefined%
      \relax%
    \else%
      \setlength{\unitlength}{\unitlength * \real{\svgscale}}%
    \fi%
  \else%
    \setlength{\unitlength}{\svgwidth}%
  \fi%
  \global\let\svgwidth\undefined%
  \global\let\svgscale\undefined%
  \makeatother%
  \begin{picture}(1,0.10273973)%
    \lineheight{1}%
    \setlength\tabcolsep{0pt}%
    \put(0.03662987,0.3576232){\color[rgb]{0,0,0}\makebox(0,0)[lt]{\begin{minipage}{0.93116981\unitlength}\centering \end{minipage}}}%
    \put(0,0){\includegraphics[width=\unitlength,page=1]{Counterexample3.pdf}}%
    \put(0.0637779,0.0638342){\color[rgb]{0,0,0.50196078}\makebox(0,0)[t]{\lineheight{1.25}\smash{\begin{tabular}[t]{c}$A$\end{tabular}}}}%
    \put(0.28230374,0.0638342){\color[rgb]{0,0,0.50196078}\makebox(0,0)[t]{\lineheight{1.25}\smash{\begin{tabular}[t]{c}$\langle a_4,b_3 \rangle$\end{tabular}}}}%
    \put(0.50033517,0.0638342){\color[rgb]{0,0,0.50196078}\makebox(0,0)[t]{\lineheight{1.25}\smash{\begin{tabular}[t]{c}$\langle a_3,b_2 \rangle$\end{tabular}}}}%
    \put(0.71935539,0.0638342){\color[rgb]{0,0,0.50196078}\makebox(0,0)[t]{\lineheight{1.25}\smash{\begin{tabular}[t]{c}$\langle a_2,b_1 \rangle$\end{tabular}}}}%
    \put(0.93689247,0.0638342){\color[rgb]{0,0,0.50196078}\makebox(0,0)[t]{\lineheight{1.25}\smash{\begin{tabular}[t]{c}$\langle a_1,b_0 \rangle$\end{tabular}}}}%
    \put(0.83026505,0.02342433){\color[rgb]{0,0.50196078,0}\makebox(0,0)[t]{\lineheight{1.25}\smash{\begin{tabular}[t]{c}$\langle a_1,b_1 \rangle$\end{tabular}}}}%
    \put(0.61404769,0.02342433){\color[rgb]{0,0.50196078,0}\makebox(0,0)[t]{\lineheight{1.25}\smash{\begin{tabular}[t]{c}$\langle a_2,b_2\rangle$\end{tabular}}}}%
    \put(0.39783026,0.02342433){\color[rgb]{0,0.50196078,0}\makebox(0,0)[t]{\lineheight{1.25}\smash{\begin{tabular}[t]{c}$\langle a_3,b_3\rangle$\end{tabular}}}}%
    \put(0.17436041,0.02342433){\color[rgb]{0,0.50196078,0}\makebox(0,0)[t]{\lineheight{1.25}\smash{\begin{tabular}[t]{c}$\langle a_4 \rangle$\end{tabular}}}}%
  \end{picture}%
\endgroup%

It's clear that this is a reduced decomposition. It remains to show that the action on the corresponding Bass-Serre tree is $1$--acylindrical on infinite subgroups. In other words it suffices to show that the stabilisers of any two distinct edges with a common end vertex are finite. Observe that a generic vertex of this decomposition has label $\left\langle a', b' \: | \: a'^{2^r} \right\rangle \cong \mathbb{Z} * \left( \mathbb{Z} / 2^{r}\mathbb{Z} \right)$ with two edges with labels $\left\langle a'^2, b' \right\rangle$ and $\left\langle a', b'a'b'^2 \right\rangle$ respectively.

\noindent%% Creator: Inkscape inkscape 0.92.4, www.inkscape.org
%% PDF/EPS/PS + LaTeX output extension by Johan Engelen, 2010
%% Accompanies image file '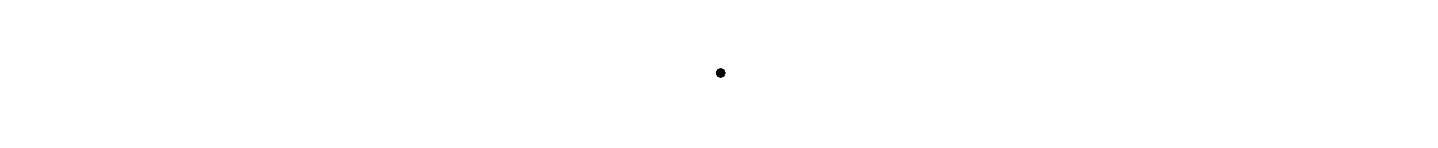' (pdf, eps, ps)
%%
%% To include the image in your LaTeX document, write
%%   \input{<filename>.pdf_tex}
%%  instead of
%%   \includegraphics{<filename>.pdf}
%% To scale the image, write
%%   \def\svgwidth{<desired width>}
%%   \input{<filename>.pdf_tex}
%%  instead of
%%   \includegraphics[width=<desired width>]{<filename>.pdf}
%%
%% Images with a different path to the parent latex file can
%% be accessed with the `import' package (which may need to be
%% installed) using
%%   \usepackage{import}
%% in the preamble, and then including the image with
%%   \import{<path to file>}{<filename>.pdf_tex}
%% Alternatively, one can specify
%%   \graphicspath{{<path to file>/}}
%% 
%% For more information, please see info/svg-inkscape on CTAN:
%%   http://tug.ctan.org/tex-archive/info/svg-inkscape
%%
\begingroup%
  \makeatletter%
  \providecommand\color[2][]{%
    \errmessage{(Inkscape) Color is used for the text in Inkscape, but the package 'color.sty' is not loaded}%
    \renewcommand\color[2][]{}%
  }%
  \providecommand\transparent[1]{%
    \errmessage{(Inkscape) Transparency is used (non-zero) for the text in Inkscape, but the package 'transparent.sty' is not loaded}%
    \renewcommand\transparent[1]{}%
  }%
  \providecommand\rotatebox[2]{#2}%
  \newcommand*\fsize{\dimexpr\f@size pt\relax}%
  \newcommand*\lineheight[1]{\fontsize{\fsize}{#1\fsize}\selectfont}%
  \ifx\svgwidth\undefined%
    \setlength{\unitlength}{413.85826772bp}%
    \ifx\svgscale\undefined%
      \relax%
    \else%
      \setlength{\unitlength}{\unitlength * \real{\svgscale}}%
    \fi%
  \else%
    \setlength{\unitlength}{\svgwidth}%
  \fi%
  \global\let\svgwidth\undefined%
  \global\let\svgscale\undefined%
  \makeatother%
  \begin{picture}(1,0.10273973)%
    \lineheight{1}%
    \setlength\tabcolsep{0pt}%
    \put(0.03662987,0.3576232){\color[rgb]{0,0,0}\makebox(0,0)[lt]{\begin{minipage}{0.93116981\unitlength}\centering \end{minipage}}}%
    \put(0,0){\includegraphics[width=\unitlength,page=1]{Counterexample4.pdf}}%
    \put(0.50033517,0.0638342){\color[rgb]{0,0,0.50196078}\makebox(0,0)[t]{\lineheight{1.25}\smash{\begin{tabular}[t]{c}$\langle a',b' \: | \: a'^{2^r} \rangle$\end{tabular}}}}%
    \put(0.63216985,0.02342433){\color[rgb]{0,0.50196078,0}\makebox(0,0)[t]{\lineheight{1.25}\smash{\begin{tabular}[t]{c}$\langle a'^2,b' \rangle$\end{tabular}}}}%
    \put(0.37608366,0.02342433){\color[rgb]{0,0.50196078,0}\makebox(0,0)[t]{\lineheight{1.25}\smash{\begin{tabular}[t]{c}$\langle a',b'a'b'^2 \rangle$\end{tabular}}}}%
    \put(0,0){\includegraphics[width=\unitlength,page=2]{Counterexample4.pdf}}%
  \end{picture}%
\endgroup%

So there are three different pairs of edges we need to consider. 

\begin{proof}[Case 1]
The intersection of $\left\langle a'^2, b' \right\rangle$ and $\left\langle a'^2, b' \right\rangle^g$ for $g \in \left\langle a', b' \right\rangle \setminus \left\langle a'^2, b' \right\rangle$. 

Let $\tilde{w} = \alpha_1 \alpha_2 \cdots \alpha_n$ and $\tilde{w}^h$ be cyclicly reduced words in $\left\lbrace a'^{\pm 2},b'^{\pm 1} \right\rbrace$ for some $h \in \left\langle a', b' \right\rangle$. Observe by cycling letters in $\left\lbrace a'^{\pm 1},b'^{\pm 1} \right\rbrace$ that either $h \in \left\langle a'^2, b' \right\rangle$ or $\alpha_i = a'^{\pm 2}$ for all $i$ and $h$ is an (odd) power of $a'$. It follows that every element in the intersection is conjugate to a power of $a'$. Moreover we see that $w \in \left\langle a'^2, b' \right\rangle \cap \left\langle a'^2, b' \right\rangle^g$ if and only if there are $g_1$, $g_2 \in \left\langle a'^2, b' \right\rangle$ and $n \in \mathbb{Z}$ such that $g = g_1a'g_2$ and $w = (a'^{2n})^{g_2}$. 

If we can show that $g$ can only be expressed in the form $g_1a'g_2$ in an ``essentially unique'' way then it follows that the intersection is cyclic and hence finite as $a'^2$ has finite order. More precisely it suffices to show that whenever $g_1a'g_2 = g'_1a'g'_2$ (where $g_1, g'_1, g_2, g'_2 \in \left\langle a'^2, b' \right\rangle$) then $a'^{g_2} = a'^{g'_2}$. By the rigidity of reduced words in $\left\langle a', b' \: | \: a'^{2^r} \right\rangle$ observe that this equality only happens if $g'_1 = g_1 a'^{-2r}$ and $g'_2 = a'^{2r} g_2$ for some $r \in \mathbb{Z}$. Thus 
\begin{equation*}
a'^{g'_2} = a'^{\left(a'^{2r}g_2\right)} = \left(a'^{a'^{2r}}\right)^{g_2} = a'^{g_2}
\end{equation*}
As required. 
\end{proof}

\begin{proof}[Case 2]
The intersection of $\left\langle a', b'a'b'^2 \right\rangle$ and $\left\langle a', b'a'b'^2 \right\rangle^g$ for $g \in \left\langle a', b' \right\rangle \setminus \left\langle a', b'a'b'^2 \right\rangle$. \\

Let $\tilde{w} = \alpha_1 \alpha_2 \cdots \alpha_n$ and $\tilde{w}^h$ be cyclicly reduced words in $\left\lbrace a'^{\pm 1}, (b'a'b'^2)^{\pm 1} \right\rbrace$ for some $h \in \left\langle a', b' \right\rangle$. Observe by cycling letters in $\left\lbrace a'^{\pm 1},b'^{\pm 1} \right\rbrace$ that we must have $h \in \left\langle a', b'a'b'^2 \right\rangle$. It follows that every element in the intersection trivial unless $g \in  \left\langle a', b'a'b'^2 \right\rangle$.
\end{proof}

\begin{proof}[Case 3]
The intersection of $\left\langle a'^{2}, b' \right\rangle$ and $\left\langle a', b'a'b'^2 \right\rangle^g$ for $g \in \left\langle a', b' \right\rangle$. \\

Let $\tilde{w}_1 = \alpha_1 \alpha_2 \cdots \alpha_n$ be a cyclicly reduced word in $\left\lbrace a'^{\pm 2}, b'^{\pm 1} \right\rbrace$and let $\tilde{w}_2 = \beta_1 \beta_2 \cdots \beta_m$ be a cyclicly reduced word in $\left\lbrace a'^{\pm 1}, (b'a'b'^2)^{\pm 1} \right\rbrace$. Suppose that $\tilde{w}_1$ and $\tilde{w}_2$ conjugate to each other. Observe that $(b'a'b'^2)^{\pm}$ can't be a subword of any cyclic permutation (in $\left\lbrace a'^{\pm 1}, b'^{\pm 1} \right\rbrace$) of $\tilde{w}_1$ and so $\beta_j \neq (b'a'b'^2)^{\pm}$ for any $j$. Hence $\tilde{w}_1$ and $\tilde{w}_2$ are (even) powers of $a'$. Essentially the same argument also shows that $\left\langle a'^{2}, b' \right\rangle \cap \left\langle a', b'a'b'^2 \right\rangle = \left\langle a'^2 \right\rangle$. 

Now the above says that if $w \in \left\langle a'^{2}, b' \right\rangle \cap \left\langle a', b'a'b'^2 \right\rangle^g$ then we must have $w = (a'^{2n})^{h}$ for some $h \in \left\langle a', b' \right\rangle$. Now arguments from case 1 imply that either $h$ or $a'h$ must be in $\left\langle a'^2, b' \right\rangle$ as $w \in \left\langle a'^{2}, b' \right\rangle$. Likewise arguments from case 2 implies that $k^{-1} := hg^{-1} \in \left\langle a', b'a'b'^2 \right\rangle$ as $w^{g^{-1}} \in \left\langle a'^{2}, b' \right\rangle$. So $g = kh$ and WLOG we have $h \in \left\langle a'^2, b' \right\rangle$; as if $a'h \in \left\langle a'^2, b' \right\rangle$ we can just replace $h$ with $a'^{-1}h$ and $k$ with $ka'$. 

If we can show that $g$ can only be expressed in the form $kh$ in an ``essentially unique'' way then it follows that the intersection is cyclic and hence finite as $a'^2$ has finite order. More precisely we wish to show that if $kh = k'h'$ (where $k, k' \in \left\langle a', b'a'b'^2 \right\rangle$ and $h, h' \in \left\langle a'^2, b' \right\rangle$) then $a'^{h} = a'^{h'}$. Since $\left\langle a'^{2}, b' \right\rangle \cap \left\langle a', b'a'b'^2 \right\rangle = \left\langle a'^2 \right\rangle$ and by the rigidity of reduced words in $\left\langle a', b' \: | \: a'^{2^r} \right\rangle$ we see that this only happens if $k' = ka'^{-2r}$ and $h' = a'^{2r}h$ for some $r \in \mathbb{Z}$. Once again we get $a'^{h} = a'^{h'}$ in the same way as in case 1. $\square$
\end{proof}

\end{proof}

Note that a hyperbolic group $G$ cannot satisfy \thref{res:strongCE}. This is because there are only finitely many conjugacy classes of finite subgroups of a hyperbolic group \cite{Brady}; thus there is some bound on the order of finite subgroups. We can then apply the bound for $(k,C)$--acylindrical actions to get a bound here. One may then wonder if Weidmann's conjecture holds for hyperbolic groups; however a slight tweak to our example shows that this isn't true either, even for free groups.

\begin{theorem}\thlabel{res:hyperbolicCE}
For any $N > 0$ there is an action of $F_2$ on a reduced tree which is $1$--acylindrical on non-cyclic subgroups and has $N$ orbits of edges. 
\end{theorem}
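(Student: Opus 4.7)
The idea is to emulate the construction of Theorem \thref{res:strongCE} inside $F_2$, using a chain of infinite cyclic subgroups in place of the dyadic group $D$. Write $F_2=\langle a,b\rangle$, set $a_i := a^{2^{N+1-i}}$ for $i=1,\dots,N+1$ (so $a_{i+1}^2 = a_i$ and $\langle a_1\rangle < \cdots < \langle a_{N+1}\rangle = \langle a\rangle$), and let $b_0:=b$ and $b_j := b_{j-1}\,a_j\,b_{j-1}^2$ for $j=1,\dots,N$. Beginning with the splitting $F_2 = \langle a \rangle * \langle b \rangle$, subdivide the edge into $N+1$ subedges and then run exactly the sequence of folds used in the proof of Theorem \thref{res:strongCE}: first $N+1$ type II folds (the $i$-th pulling $a_{N+2-i}$ across the $i$-th edge from the $\langle a \rangle$-side), then $N$ reverse type II folds (the $j$-th pulling $b_j$ across the $(N+2-j)$-th edge).

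\paragraph{Collapsing the extra edge.}
The resulting $(N+1)$-edge graph of groups has the same combinatorial shape as the one in Theorem \thref{res:strongCE}. However, because $a_{N+1} = a$ here (whereas in the original construction $A$ was strictly larger than any $\langle a_i \rangle$), the leftmost vertex group $\langle a \rangle$ coincides with the leftmost edge group $\langle a_{N+1} \rangle$. Collapsing this one offending edge merges the leftmost two vertices into a single vertex of group $\langle a, b_N \rangle$, yielding a graph of groups with exactly $N$ edges. At a generic vertex $V$ (with the same from-the-right indexing as in Theorem \thref{res:strongCE}, so that $V = \langle c, d \rangle$ with $c = a_{i+1}$ and $d = b_i$), the vertex group $\langle c, d \rangle$ is free of rank $2$ in $F_2$ because $c$ is a non-trivial power of $a$ and $d$ contains $b$'s, and the two adjacent edge groups are $\langle c^2, d \rangle$ and $\langle c, dcd^2 \rangle$. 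Reducedness at each vertex follows from straightforward abelianisation arguments: these edge groups have images of index $2$ and $3$ in the abelianisation $\mathbb{Z}^2$ of $\langle c, d \rangle$, so each is a proper subgroup.

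\paragraph{Acylindricity.}
It remains to verify that the action on the Bass-Serre tree is $1$--acylindrical on non-cyclic subgroups, equivalently that at each vertex the intersection of the stabilisers of any two distinct edges is cyclic. With the same notation $c, d$ as above, this reduces to exactly the three cases considered in the proof of Theorem \thref{res:strongCE}: (i) the self-conjugate intersection of $\langle c^2, d\rangle$ is cyclic (in fact, conjugate to a power of $c$); (ii) $\langle c, dcd^2\rangle$ is malnormal in $\langle c, d\rangle$; and (iii) the mixed intersection $\langle c^2, d\rangle \cap g\langle c, dcd^2\rangle g^{-1}$ is cyclic. Each of these follows verbatim from the cyclically-reduced-word arguments given in the three cases of Theorem \thref{res:strongCE}, which only use the free product structure of $F(c,d)=\mathbb{Z} * \mathbb{Z}$; the sole difference is that $c$ now has infinite order, so the resulting cyclic intersections are infinite cyclic rather than finite cyclic (still cyclic as required). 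The extremal vertices $\langle a, b_N\rangle$ and $\langle a^{2^N}, b\rangle$ have valence one in the graph of groups, and the relevant acylindricity checks there are instances of cases (i) and (ii) respectively. The only genuinely new step beyond Theorem \thref{res:strongCE} is the initial edge collapse; the acylindricity verification itself is a routine transcription.
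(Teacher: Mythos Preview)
Your proof is correct and follows essentially the same approach as the paper. The one cosmetic difference is that the paper avoids your collapse step entirely by taking $a_i = a^{2^i}$ (so that every $\langle a_i \rangle$ is a proper subgroup of $A = \langle a \rangle$) and subdividing into exactly $N$ edges from the start; your extra subdivision-then-collapse is harmless but unnecessary.
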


\begin{proof}
The construction is mostly the same as \thref{res:strongCE} and so we will only detail the changes. This time we define $A := \left\langle a \right\rangle \cong \mathbb{Z}$ so that $G = \left\langle a,b \right\rangle \cong F_2$. Pick any $N>0$ and define $a_i = a^{2^{i}}$. We now define the tree and see that it satisfies the necessary conditions in the same way as before. $\square$
\end{proof}

In both of these constructions we exploit chains of subgroups with arbitrary length. More precisely we have the chain of subgroups $\left\langle a_0 \right\rangle > \left\langle a_1 \right\rangle > \cdots > \left\langle a_N \right\rangle$ and build the tree in such a way that each group in this chain fixes a vertex which isn't fixed by any of the larger ones. Forcing $\mathcal{P}$ to have finite height and insisting the tree is $\mathcal{P}$-closed ensures that we cannot use these long chains to make arbitrarily complicated decompositions in the same way.

\section{Forests of Influence}\label{sec:influence}

We begin by stating Dunwoody's resolution lemma.

\begin{theorem}[\cite{Dunwoody_fp}]\thlabel{res:dunwoody_resolution}
Suppose that $G$ is an (almost) finitely presented group. Then there is some $C(G) \in \mathbb{N}$ with the following properties. Whenever $G$ acts on a minimal tree $T$ there is some minimal tree $T'$ with at most $C(G)$ orbits of edges and a combinatorial map $\Psi: T' \rightarrow T$ where no edge gets mapped to a point. 
\end{theorem}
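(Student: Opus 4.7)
The plan is to use Dunwoody's original ``track'' (pattern) approach, whose only input is that $G$ acts freely, simplicially and cocompactly on a simply connected $2$-complex $X$ with $H^1(X,\mathbb{Z}_2)=0$; this is exactly the definition of almost finitely presented, and it is all the hypothesis one needs.

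First, I would produce a $G$--equivariant combinatorial map $f: X \to T$. On the $0$--skeleton this is done by choosing an arbitrary image vertex in $T$ for each representative of a $G$--orbit of vertices of $X$ and extending equivariantly. On each $1$--cell of $X$, send it linearly onto the reduced edge path between the images of its endpoints. For each $2$--cell, the boundary maps to a closed combinatorial loop in $T$, which is nullhomotopic because $T$ is a tree, so one can extend $f$ over the $2$--cell in such a way that the preimage of the midpoints of edges of $T$ is a properly embedded $1$--dimensional subcomplex (a \emph{pattern}). After a standard transversality adjustment near vertices and edges of $T$, each connected component of the preimage of an edge-midpoint is what Dunwoody calls a \emph{track} in $X$.

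The next step is to build $T'$ as the dual of this pattern. The key point is that the vanishing $H^1(X,\mathbb{Z}_2)=0$ forces each track to be two-sided in $X$, so cutting $X$ along all tracks separates it into pieces (``chambers''). Because $X$ is simply connected, the dual graph whose vertices are chambers and whose edges are tracks is a tree $T'$. The $G$--action on $X$ permutes chambers and tracks, giving a $G$--action on $T'$, and sending each chamber to the common image in $T$ of its vertices (together with sending each track to the corresponding edge-midpoint) produces a $G$--equivariant combinatorial map $\Psi: T' \to T$ which by construction collapses no edge to a point. Passing to the $G$--minimal subtree of $T'$ and collapsing pairs of parallel tracks (i.e.\ pairs bounding an annular region in $X$ with no other tracks) yields a minimal $T'$ whose edges correspond to non-parallel orbits of tracks.

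The final and genuinely hard step is to bound the number of $G$--orbits of pairwise non-parallel two-sided tracks in $X$ by a constant $C(G)$ depending only on $X/G$. This is the combinatorial accessibility estimate that lies at the heart of Dunwoody's paper \cite{Dunwoody_fp}: one introduces a complexity on finite disjoint unions of non-parallel tracks (essentially counting how many $2$--cells of $X/G$ are cut by each orbit) and shows via an Euler characteristic / linear algebra argument over $\mathbb{Z}_2$ that this complexity is bounded in terms of the number of $2$--cells of $X/G$. Everything before this step is essentially formal; this is the step I would expect to be the obstacle and where the hypothesis on $H^1(X,\mathbb{Z}_2)$ is genuinely used, both to ensure tracks are two-sided and to make the $\mathbb{Z}_2$--cycle argument go through. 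Once this bound is in hand, together with minimality of $T'$, we get at most $C(G)$ orbits of edges of $T'$, finishing the proof.
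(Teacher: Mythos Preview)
The paper does not give its own proof of this statement: Theorem~\ref{res:dunwoody_resolution} is simply stated with a citation to \cite{Dunwoody_fp} and then used as a black box in the proofs of \thref{res:main_simple} and \thref{res:main_result}~\ref{pt:main_fp}. So there is nothing in the paper to compare your argument against.

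That said, your outline is a faithful sketch of Dunwoody's original track/pattern argument and is the standard way one would justify this result. The steps you identify are the right ones: build an equivariant map $X\to T$, pull back edge-midpoints to get a pattern of tracks, use $H^1(X,\mathbb{Z}_2)=0$ to ensure two-sidedness, take the dual tree, and then invoke Dunwoody's bound on the number of non-parallel track orbits in a cocompact $2$-complex. You correctly flag that this last combinatorial bound is the substantive content and everything before it is essentially formal. If you were asked to \emph{prove} this in a paper rather than cite it, you would need to actually carry out that last step (or at least state Dunwoody's track-counting theorem precisely and cite it), but as a summary of the method your proposal is accurate.
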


We'll now give an \emph{extremely} rough outline of the core ideas of the argument. Suppose that $G$ acts on a minimal tree $T$ which is $k$--acylindrical on groups larger than $\mathcal{P}$. Use Dunwoody's resolution lemma to obtain a tree $T'$ which has a bound on the number of edges and a map $\Psi: T' \rightarrow T$. If some edge of $T'$ (before subdividing) has a stabiliser larger than $\mathcal{P}$ then its image in $T$ cannot have more than $k$ edges because of the acylindrical condition. Thus we can collapse this edge in $T'$ and only collapse at most $k$ edges of $T$. 

Now subdivide $T'$ to make $\Psi$ simplicial, but note that the initial vertices are `more important' in the sense that every vertex stabiliser is contained in one of these. So we can build a collection of disjoint subtrees for $T'$ by starting with this set of initial vertices and then iteratively expanding to include vertices whose stabiliser is contained in the stabiliser of the corresponding initial vertex. 

Now we subdivide $\Psi$ into folds using Stallings' folding theorem (\thref{res:stallings_folding_thm}) and apply the first fold. If every vertex stabilizer is still contained in a stabilizer for one of the initial vertices then we have still have a collection of subtrees with the same properties as before. Otherwise some vertex stabiliser isn't contained in one of the initial ones. This only happens if two of our subtrees gets folded together in some way which is unavoidable. We then add this vertex to our set of ``initial'' ones and then rebuild our collection of subtrees with the same properties as before. However we will see that the intersections of the stabilisers between one of the original initial vertices and this ``new initial vertex'' is strictly larger than the intersection of the original initial vertices. (See Figure~\ref{fig:FoIExample} for an example or \thref{res:forests_enjoy_folds} for a more precise statement.) If $\mathcal{P}$ has finite height this means that this can only happen boundedly often before one of these intersections is larger than $\mathcal{P}$ and so can collapse down a path of length at most $k$. So either we can keep doing this until we are left with a single point or we get a set of ``initial'' vertices for $T$. In the latter case if $T$ is $\mathcal{P}$--partially reduced we can find a bound for the number of edges using our set ``initial'' vertices. (See \thref{res:reduced_key_lemma}.)  

% Now subdivide $T'$ to make $\Psi$ simplicial, but note that the initial vertices are `more important' in the sense that every vertex stabiliser is contained in one of these. Now begin folding as in Stallings folding theorem. Whenever some vertex stabiliser isn't contained in one of of the `important' ones we add the corresponding vertex to this list of `important' vertices. When this happens we will see that the intersections of the stabilisers between two `consecutive important vertices' must contain larger subgroups than before. (See diagram below for an example or \thref{res:forests_enjoy_folds} for a more precise statement.) If $\mathcal{P}$ has finite height this means that this can only happen boundedly often before one of these intersections is larger than $\mathcal{P}$ and so can collapse down a path . So either we can keep doing this until we are left with a single point or we get a set of `important' vertices for $T$. In the latter case if $T$ is $\mathcal{P}$--partially reduced we can find a bound for the number of edges using our set `important' vertices. (See \thref{res:reduced_key_lemma}.) 

\begin{figure}[h!]
\centering
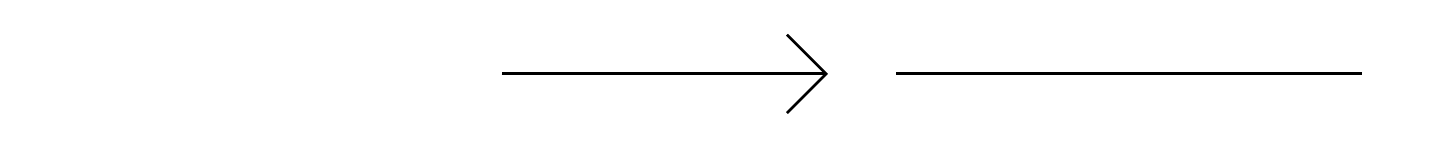
\caption{After applying a series of type II folds to an edge (with subdivisions) there may a vertex whose stabiliser isn't contained in the stabiliser of either of the initial vertices. If this happens we see that the intersection of stabilisers between this vertex and either of the initial vertices must contain the original edge group as a \emph{proper} subgroup.}
\label{fig:FoIExample}
\end{figure}

In order to make the above precise we introduce the following notions.

\begin{definition}
Suppose $G$ acts on a tree $T$. We call a subset of vertices $S$ a set of \emph{seed vertices} if it's $G$--invariant and for every vertex $v$ (with non-trivial stabiliser) there is some $u \in S$ with $\stab v \leqslant \stab u$. In particular if the action on $T$ is free we also allow the empty set to be a set of seed vertices, otherwise $S$ is necessarily non-empty.
\end{definition}

\begin{definition}
Suppose $G$ acts on a tree $T$. A $G$--invariant subgraph $\Gamma \subseteq T$ is a \emph{forest of influence} if the following conditions hold.
\begin{itemize}
\item $\Gamma$ deformation retracts to a non-empty set of seed vertices $S$, equivalently every component of $\Gamma$ contains exactly one member of $S$. We say that $\Gamma$ is \emph{grown} from $S$.
\item If vertices $u$ and $v$ are in the same connected component of $\Gamma$ with $u \in S$ then $\stab v \leqslant \stab u$. We call such a component the \emph{tree of influence} of $u$, say that $v$ is \emph{influenced} by $u$ and call the reduced edge path from $v$ to $u$ the \emph{branch} of $v$.
\item Every vertex of $T$ is contained in $\Gamma$.
\end{itemize}
\end{definition}

\begin{remark}\thlabel{res:branch_edge}
The branch of any vertex $v$ is stabilised by $\stab v$. As such the first edge on the branch of $v$ must have the same stabiliser as $v$ as any edge cannot be fixed by more than either of its endpoints. 
\end{remark}

\begin{definition}
Suppose $G$ acts on a tree $T$ and that $\Gamma \subseteq T$ is a forest of influence. We call the edges of $T \setminus \Gamma$ the \emph{connecting edges} of $\Gamma$. The \emph{connecting groups} are the conjugacy classes of (a set of representatives for) the connecting edges, counted with multiplicity.
\end{definition}

In general there is not a distinguished choice for a forest of influence. However the following proposition says there is something canonical lurking  underneath. This will allow us to move between different choices with minimal difficulties. 

\begin{proposition}\thlabel{res:modify_trees}
Suppose that $T$ has finitely many orbits of vertices. Suppose also that $\Gamma_1$ and $\Gamma_2$ are forests of influence which are both grown from the same set of seed vertices $S$. Then $\Gamma_1$ and $\Gamma_2$ have the same connecting groups. In other words the connecting groups are determined by $S$.
\end{proposition}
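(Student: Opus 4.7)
The plan is to build a canonical $G$-equivariant, stabiliser-preserving bijection, for each forest of influence $\Gamma$ grown from $S$, between the set of non-seed vertices $V(T) \setminus S$ and the edge set $E(\Gamma)$. Given such a $\Gamma$ and a non-seed vertex $v$, let $s(v) \in S$ be the unique seed in the component $C \subseteq \Gamma$ containing $v$, and define $\phi_\Gamma(v) \in E(\Gamma)$ to be the first edge of the reduced branch from $v$ to $s(v)$. This is well-defined because every vertex of $T$ lies in $\Gamma$ and each component contains a unique seed.

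Next I would verify three properties of $\phi_\Gamma$. First, it is a bijection: given $e \in E(\Gamma)$, its two endpoints lie in a common component $C_s$ of $\Gamma$, and at most one can equal $s$, so the endpoint farther from $s$ in $C_s$ is a non-seed vertex with $e$ as its parent edge (surjectivity), and this endpoint is determined by $e$ (injectivity). Second, $\phi_\Gamma$ is $G$-equivariant, which is immediate from the $G$-invariance of $\Gamma$ and $S$. Third, $\stab \phi_\Gamma(v) = \stab v$: this is precisely the content of Remark~\thref{res:branch_edge}, since the first edge of the branch of $v$ shares its stabiliser with $v$.

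Passing to the quotient $T/G$, $\phi_\Gamma$ descends to a stabiliser-preserving bijection between orbits of non-seed vertices of $T$ and orbits of edges of $\Gamma$. Hence the multiset of conjugacy classes of edge stabilisers of $\Gamma$ coincides with the multiset of conjugacy classes of stabilisers of non-seed vertices of $T$; crucially, this latter multiset depends only on $(T, S)$ and not on $\Gamma$, and is finite by the assumption that $T$ has finitely many orbits of vertices. Applying this to both $\Gamma_1$ and $\Gamma_2$ and subtracting (as multisets) from the fixed total multiset of conjugacy classes of edge stabilisers of $T$ yields the equality of connecting groups. The only non-formal step is the stabiliser identity $\stab \phi_\Gamma(v) = \stab v$, and this is handled by Remark~\thref{res:branch_edge}, so I anticipate no substantive obstacle beyond bookkeeping.
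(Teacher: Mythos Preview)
Your argument is correct and takes a genuinely different route from the paper. The paper introduces \emph{elementary transformations} of a forest of influence (swapping one branch edge for a connecting edge with the same stabiliser) and then proves a separate lemma showing that any two forests of influence grown from the same $S$ are connected by a finite chain of elementary transformations; since each elementary transformation visibly preserves the connecting groups, the proposition follows. Your approach instead identifies the multiset of $\Gamma$-edge stabiliser classes \emph{directly} with the multiset of non-seed vertex stabiliser classes via the parent-edge bijection $\phi_\Gamma$, and then reads off the connecting groups as the complement inside the fixed multiset of all edge stabiliser classes of $T$.

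Your route is shorter and more conceptual: it pins down explicitly what the connecting groups actually are in terms of $(T,S)$ alone, and it does not really need the finiteness hypothesis (the multiset complement makes sense regardless). The paper's route, while less direct here, has the advantage that the notion of elementary transformation is reused later: in the proof of \thref{res:forests_enjoy_folds} (Cases 3aii and 3bii) the author argues by saying ``otherwise we could apply an elementary transformation to reach Case 1,'' so that machinery earns its keep elsewhere. Both proofs rest on the same observation (\thref{res:branch_edge}) that the first edge of a branch has the same stabiliser as its non-seed endpoint.
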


Before proving this we'll first we'll define an \emph{elementary transformation} of a forest of influence. Take a forest of influence $\Gamma$ and pick a vertex $v \in \Gamma \setminus S$. Suppose that $v$ is contained in the tree of influence of $u$ and let $e_1$ be the first edge on the branch of $v$. Observe that $\stab e_1 = \stab v$ and pick some connecting edge $e_2$ with endpoint $v$ and with $\stab e_2 = \stab v$. We now define $\Gamma' := (\Gamma \setminus G \left\lbrace e_1 \right\rbrace) \cup G \left\lbrace e_2 \right\rbrace$. In other words we replace the orbit of $e_1$ in $\Gamma$ with the orbit of $e_2$ in $\Gamma'$. (See Figure~\ref{fig:EleTrnsfm}.) Since $\stab e_1 = \stab e_2 = \stab v$ we see that $\Gamma'$ is also a forest of influence grown from $S$ and that both $\Gamma$ and $\Gamma'$ have the same connecting groups. 

\begin{figure}[h!]
\centering
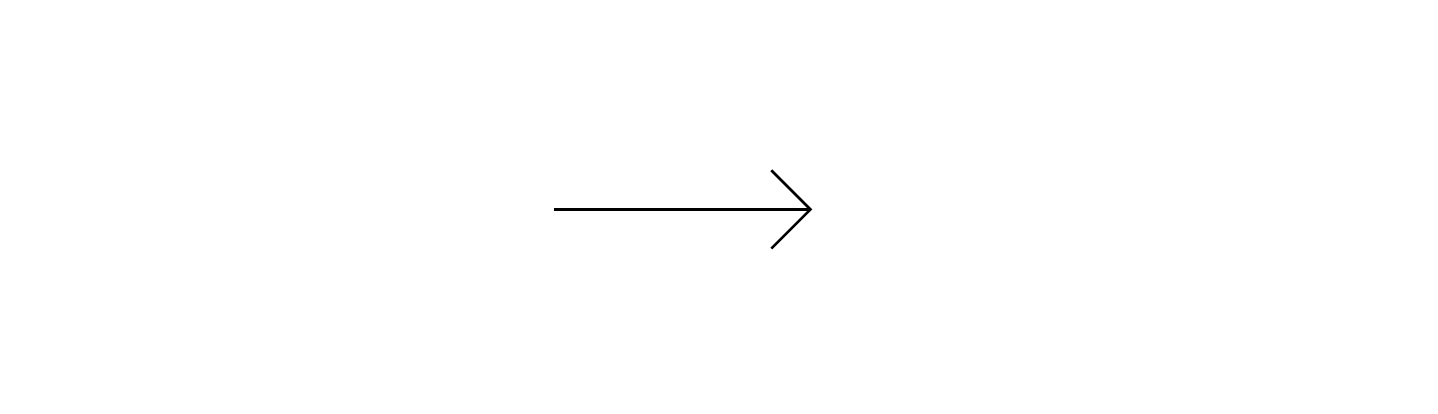
\caption{An example of an elementary transformation. The edge $e_1$ is removed and replaced with $e_2$. For $\Gamma'$ to be a forest of influence we must have $\stab e_1 = \stab e_2 = \stab v$.}
\label{fig:EleTrnsfm}
\end{figure}

\thref{res:modify_trees} is now an immediate consequence of the following.

\begin{lemma}\thlabel{res:ele_trnsfm_chain}
Suppose that $T$ has finitely many orbits of vertices and that $\Gamma_1$ and $\Gamma_2$ are forests of influence which are both grown from the same set of seed vertices $S$. Then we can apply a finite series of elementary transformations to $\Gamma_1$ to obtain $\Gamma_2$.
\end{lemma}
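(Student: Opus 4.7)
The plan is to induct on the number $n$ of $G$-orbits of non-seed vertices $[v]$ at which $p_1(v) \neq p_2(v)$, where $p_i(v)$ denotes the first edge on the branch of $v$ in $\Gamma_i$. By \thref{res:branch_edge} this is the unique edge of $\Gamma_i$ incident to $v$ with stabiliser equal to $\stab v$, and the forest of influence $\Gamma_i$ is determined by its parent-edge function $p_i$ since its edges are the $G$-orbits of the values of $p_i$. Let $D$ be the set of such orbits; since $T$ has finitely many vertex orbits, $n = |D|$ is finite. In the base case $n = 0$, we have $p_1 = p_2$ pointwise and hence $\Gamma_1 = \Gamma_2$.

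For the inductive step ($n \geq 1$), I would select $[v_0] \in D$ with maximum $\Gamma_1$-depth $d_1(v_0)$ (the length of the branch of $v_0$ in $\Gamma_1$) and argue that the elementary transformation at $v_0$ swapping $p_1(v_0)$ for $p_2(v_0)$ is valid and strictly reduces $|D|$. The main point to verify is $p_2(v_0) \notin E(\Gamma_1)$, so that $p_2(v_0)$ is a connecting edge of $\Gamma_1$ eligible for the transformation (it is automatically incident to $v_0$ with $\stab p_2(v_0) = \stab v_0$). Suppose for contradiction $p_2(v_0) \in E(\Gamma_1)$. Every edge of $\Gamma_1$ is the parent edge of exactly one of its endpoints, so since $p_1(v_0) \neq p_2(v_0)$ we must have $p_2(v_0) = p_1(w)$ where $w$ is the other endpoint of $p_2(v_0)$. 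This makes $v_0$ the $\Gamma_1$-parent of $w$, forcing $d_1(w) = d_1(v_0) + 1$; by maximality $[w] \notin D$, hence $p_2(w) = p_1(w) = p_2(v_0)$, which says $v_0$ is also the $\Gamma_2$-parent of $w$. Combined with $w$ being the $\Gamma_2$-parent of $v_0$ by construction, the chain of $\Gamma_2$-parents starting from $v_0$ cycles between $v_0$ and $w$ indefinitely and so never reaches a seed, contradicting the fact that $\Gamma_2$ is a forest of influence.

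Let $\Gamma_1'$ denote the result of the elementary transformation. It is a forest of influence grown from $S$ by the discussion preceding the statement. Concretely, its parent-edge function equals $p_1(v')$ for $v' \notin Gv_0$ (since the uniqueness of the child endpoint of each edge in $\Gamma_1$ forces $p_1(v') \notin Gp_1(v_0)$ when $v' \notin Gv_0$, so $p_1(v')$ survives in $\Gamma_1'$ and remains the initial edge of $v'$'s branch) and equals $gp_2(v_0)$ on $gv_0$. The new disagreement set with $\Gamma_2$ is therefore exactly $D \setminus \{[v_0]\}$, strictly smaller than $D$. Applying the inductive hypothesis to $\Gamma_1'$ and $\Gamma_2$ furnishes a finite sequence of elementary transformations from $\Gamma_1'$ to $\Gamma_2$, and prepending the single transformation $\Gamma_1 \to \Gamma_1'$ produces the desired sequence from $\Gamma_1$ to $\Gamma_2$.

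The principal obstacle is the cycle-creation argument ruling out $p_2(v_0) \in E(\Gamma_1)$; here the maximum-depth choice of $[v_0]$ is essential, since it forces the hypothetical $w$ to satisfy $p_1(w) = p_2(w)$ and thereby produces the fatal $\Gamma_2$-cycle. A secondary, largely routine check is that the elementary transformation modifies the parent-edge function only on $Gv_0$, which is immediate from the uniqueness of the child endpoint of an edge and from the observation that branches in $\Gamma_1'$ of vertices outside $Gv_0$ begin with the same edge as in $\Gamma_1$.
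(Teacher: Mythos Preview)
Your proof is correct and essentially parallel in spirit to the paper's, but it differs in two technical choices. The paper inducts on $d(\Gamma_1,\Gamma_2)$, the number of orbits of vertices that lie in trees of influence of \emph{different seed vertices} in $\Gamma_1$ versus $\Gamma_2$; you instead induct on the number of orbits with different \emph{parent edges}. Both vanish precisely when $\Gamma_1=\Gamma_2$ (since in a tree the partition into components determines the subtrees), so either serves as a complexity measure. More interestingly, the vertex at which to perform the transformation is located differently: the paper picks any vertex $v$ with mismatched seeds, traces its $\Gamma_2$-branch to the last edge $e'_2$ not in $\Gamma_1$, and transforms at the far endpoint $v'$ of $e'_2$; you pick a vertex $v_0\in D$ of maximal $\Gamma_1$-depth and transform there directly. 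Your maximal-depth choice makes the verification that $p_2(v_0)$ is a connecting edge of $\Gamma_1$ pleasantly self-contained (the cycle contradiction), whereas the paper's branch-tracing makes this immediate by construction but then requires a short argument to see that the transformation does not disturb vertices already agreeing in both forests. Both arguments are of comparable length; yours is slightly more combinatorial in flavour, the paper's slightly more geometric.
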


\begin{proof}
Let $d(\Gamma_1, \Gamma_2)$ be the number of (orbits of) vertices which are in trees of influence of different seed vertices in $\Gamma_1$ and $\Gamma_2$. If $d(\Gamma_1, \Gamma_2) = 0$ then $\Gamma_1 = \Gamma_2$ and there is nothing to show. 

If $d(\Gamma_1, \Gamma_2) > 0$ pick a vertex $v$ which is in the tree of influence of $u_1$ in $\Gamma_1$ and of $u_2 \neq u_1$ in $\Gamma_2$. Let $e'_2$ be the final edge in the branch of $v$ (in $\Gamma_2$) which is not contained in $\Gamma_1$ and so is a connecting edge of $\Gamma_1$. Let $v'$ be the endpoint of $e'_2$ which is not in the tree of influence of $u_2$ in $\Gamma_1$. Observe that $v'$ is in the tree of influence of $u_2$ in $\Gamma_2$ as $v$ is. Suppose that $v'$ is in the tree of influence of $u'_1$ in $\Gamma_1$ and let $e'_1$ be the first edge on the branch of $v'$ (in $\Gamma_1$). Since $\stab v' = \stab e'_1 = \stab e'_2$ we can apply an elementary transformation to $\Gamma_1$ by removing the orbit of $e'_1$ and adding the orbit of $e'_2$ to get $\Gamma'_1$. (See Figure~\ref{fig:ModifyForest}.) 

\begin{figure}[h!]
\centering
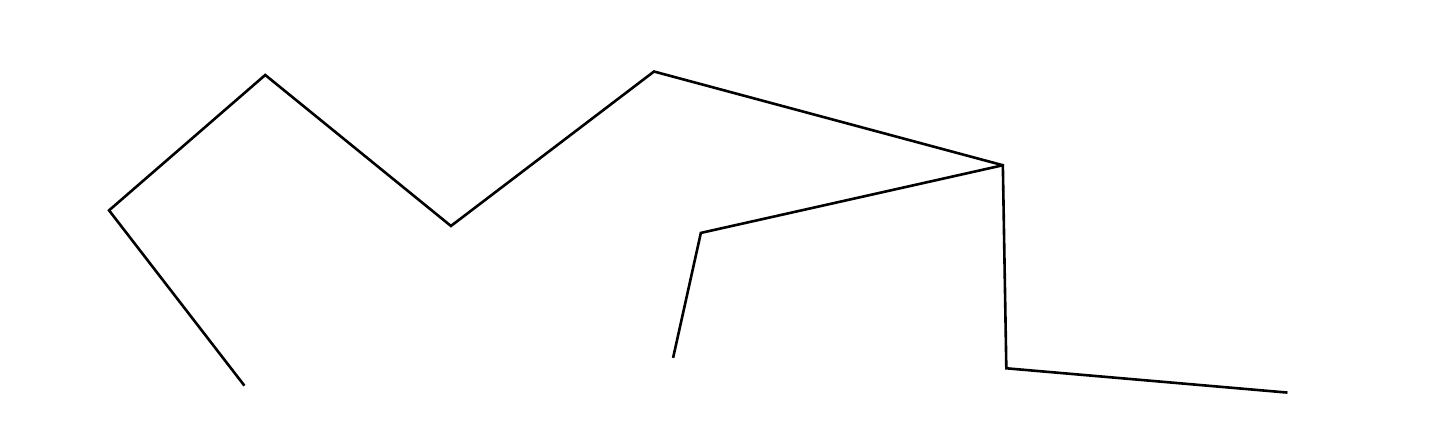
\caption{An example of a situation where $d(\Gamma_1, \Gamma_2) > 0$. By replacing $e'_1$ with $e'_2$ in $\Gamma_1$ we can make it ``more similar'' to $\Gamma_2$. This idea of how similar two forests of influence are is formalised by the metric $d$.}
\label{fig:ModifyForest}
\end{figure}

If we can show that $d(\Gamma'_1, \Gamma_2) < d(\Gamma_1, \Gamma_2)$ then we are done by induction. Observe that $v'$ is in the tree of influence of $u_2$ in $\Gamma'_1$ and $\Gamma_2$ but not in $\Gamma_1$. Thus we just need to show that any vertex which is influenced by the same seed vertex in $\Gamma_1$ and $\Gamma_2$ is also influenced by the same one in $\Gamma'_1$. This holds because the only vertices whose influencing vertex changed under the elementary transformation were those in (the orbit of) the tree of influence of $u'_1$ in $\Gamma_1$ at and beyond $v'$. These can't be influenced by $u'_1$ in $\Gamma_2$ as $v'$ is influenced by $u_2$ in $\Gamma_2$ and so the tree of influence of $u'_1$ in $\Gamma_2$ cannot contain them. $\square$
\end{proof}

Recall the definition of the $\mathcal{P}$--weight of a subgroup $K \leqslant G$ from \thref{def:p_weight} as $W_{\mathcal{P},K} \leq 2^M$ where $M$ is the length of the longest chain of groups in $\mathcal{P}$ which contain $K$. From this definition we note that the following properties are all obvious.

\begin{proposition}\thlabel{res:weight_basics}
Let $\mathcal{P}$ be a conjugation invariant set of subgroups of $G$.
\begin{enumerate}[label=(\alph*)]
\item \label{pt:max_weight} If $\mathcal{P}$ has height $M$ then $W_{\mathcal{P},K} \leq 2^M$ for any $K \leqslant G$.
\item \label{pt:minimal_weight} $K \leqslant G$ has $\mathcal{P}$--weight $1$ if and only if it's larger than $\mathcal{P}$.
\item \label{pt:extention_weight} If $H \in \mathcal{P}$ and $H < K \leq G$ then $W_{\mathcal{P},H} \leq \frac{1}{2} W_{\mathcal{P},K}$.
\end{enumerate}
\end{proposition}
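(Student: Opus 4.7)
The plan for (a) is immediate from unwinding the definition: any chain $K \leq H_1 < H_2 < \cdots < H_n$ with $H_i \in \mathcal{P}$ that witnesses $W_{\mathcal{P},K} = 2^n$ is itself a chain of length $n$ inside $\mathcal{P}$, and by the definition of height $M$ no such chain can be longer than $M$. Hence $n \leq M$, giving $W_{\mathcal{P},K} \leq 2^M$. For (b), $W_{\mathcal{P},K} = 1 = 2^0$ says exactly that the longest chain $K \leq H_1 < \cdots < H_n$ in $\mathcal{P}$ has $n = 0$, i.e.\ there is no $H_1 \in \mathcal{P}$ containing $K$; this is precisely the definition of $K$ being larger than $\mathcal{P}$.

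For part (c) I want to be upfront: as printed the conclusion reads $W_{\mathcal{P},H} \leq \tfrac12 W_{\mathcal{P},K}$ under the hypothesis $H \in \mathcal{P}$, $H < K$, but this inequality is simply not true in general, and no argument will establish it. The weight is monotonically increasing in ``smallness'' -- indeed the trivial subgroup achieves the maximum $W_{\mathcal{P},1} = 2^M$ -- so the hypothesis $H < K$ forces $W_{\mathcal{P},H} \geq W_{\mathcal{P},K}$ in every example. A one-line witness: take $G = \mathbb{Z}$, $\mathcal{P} = \{2\mathbb{Z}, 4\mathbb{Z}, 8\mathbb{Z}, 16\mathbb{Z}\}$, $K = 8\mathbb{Z}$, $H = 16\mathbb{Z}$. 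Then $W_{\mathcal{P},K} = 8$ and $W_{\mathcal{P},H} = 16$, so the printed inequality fails. Accordingly I cannot honestly propose a proof of the statement as literally worded.

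What the proposition plainly means to assert, and what I would prove, is the same inequality with $H$ and $K$ transposed in the conclusion: $W_{\mathcal{P},K} \leq \tfrac12 W_{\mathcal{P},H}$. This is the chain-prepending trick. Take a maximal chain $K \leq H_1 < H_2 < \cdots < H_n$ in $\mathcal{P}$ realizing $W_{\mathcal{P},K} = 2^n$; then $H \in \mathcal{P}$ together with $H < K \leq H_1$ produces the strictly longer chain $H \leq H < H_1 < \cdots < H_n$ of elements of $\mathcal{P}$ containing $H$, so $W_{\mathcal{P},H} \geq 2^{n+1} = 2 W_{\mathcal{P},K}$. The only genuine ``obstacle'' is the typographical swap in the statement; once $H$ and $K$ are exchanged in the conclusion, the proof is a single line and requires no further machinery.
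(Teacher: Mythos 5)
Your handling of (a) and (b) is exactly the definition-unwinding the paper intends; in fact the paper offers no proof at all, introducing the proposition with the remark that all three properties are ``obvious''. Your diagnosis of (c) is also correct: the inequality as printed is transposed. The version the paper actually uses is the one you prove. For instance, in the proof of \thref{res:forests_enjoy_folds} (Case 2) the paper passes from $\stab e_i \in \mathcal{P}$ and $\stab e_i < \stab f_i$ to $W_{f_i,\mathcal{P}} \leq \frac{1}{2}W_{e_i,\mathcal{P}}$ ``by \thref{res:weight_basics}~\ref{pt:extention_weight}'' --- that is, the \emph{larger} group has at most half the weight of the smaller one in $\mathcal{P}$, which in the proposition's notation is $W_{\mathcal{P},K} \leq \frac{1}{2}W_{\mathcal{P},H}$; the same reading is needed for every other invocation of part~\ref{pt:extention_weight} (e.g.\ $\stab e_2 < \stab e'$ giving $W_{e',\mathcal{P}} \leq \frac{1}{2}W_{e_2,\mathcal{P}}$ in Case 3aii). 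Your counterexample to the literal wording is valid, and your chain-prepending argument --- take a maximal chain $K \leqslant H_1 < \cdots < H_n$ and prepend $H$ itself, using $H < K \leqslant H_1$ for strictness --- is precisely the one-line proof the paper is implicitly relying on. So there is no gap in your proposal; the only discrepancy is a typographical error in the paper's statement.
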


We will now extend our definition of $\mathcal{P}$--weight to sets of seed vertices. \thref{res:modify_trees} ensures this is well defined.

\begin{definition}
If $G$ acts on a tree $T$ and $S$ is a non-empty set of seed vertices for $T$ then we define its \emph{$\mathcal{P}$--weight} $W_{\mathcal{P},S}$ to be the sum of the $\mathcal{P}$--weights of the corresponding connecting groups (and $\infty$ if any of the connecting groups have infinite $\mathcal{P}$--weight). If $S$ is empty then we instead define $W_{\mathcal{P},S} := (\beta_1(T/G) - 1)W_{\mathcal{P},1}$.
\end{definition}

\begin{remark}
The case of a free action is special because the stabiliser of each vertex is trivial. As there are no ``interesting'' stabilisers we aren't really missing anything by just forgoing seed vertices entirely. If the action is free and $S$ is non empty then we see that
\begin{equation*}
W_{\mathcal{P},S} := (\beta_1(T/G) + \left| S/G \right| - 1)W_{\mathcal{P},1}.
\end{equation*}
This justifies the definition of $W_{\mathcal{P},S}$ for empty $S$ by setting $\left| S/G \right| = 0$ in the above equation. On a more practical level we allow the empty set to be a set of seed vertices for a free action to prevent an otherwise guaranteed drop in $\mathcal{P}$--weight if a fold causes a free action to become non-free. (See \thref{res:free_action_folds}.) 
\end{remark}

With this in hand we are ready to state the key lemma. From this \thref{res:main_simple} will follow quickly.

\begin{lemma}\thlabel{res:key_lemma_simple}
Suppose $G$ is a non-cyclic countable group. Let $\mathcal{P}$ be a conjugation invariant set of subgroups of $G$ which is closed under taking subgroups. Let $G$ act on a tree $T$ where this action is both $\mathcal{P}$--partially-reduced and $k$--acylindrical on a subgroups larger than $\mathcal{P}$. Let $G$ act on another tree $T'$ and suppose that there is a $G$-equivarient combinatorial map $\Psi: T' \rightarrow T$. Suppose also that $T'$ has a set of seed vertices $S$ with finite $\mathcal{P}$--weight $W_{\mathcal{P},S}$. Then $T/G$ has at most $\left( \frac{2k+1}{2} \right) W_{\mathcal{P},S}$ edges.
\end{lemma}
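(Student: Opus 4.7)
The plan is to proceed by induction over the fold decomposition of $\Psi$, tracking how a forest of influence evolves. First subdivide $T'$ so that $\Psi$ becomes simplicial; this does not disturb the seed-vertex property, since new vertices inherit edge stabilisers which were already dominated by endpoint stabilisers. Apply \thref{res:stallings_folding_thm} to decompose $\Psi = \cdots \circ \alpha_2 \circ \alpha_1$ through intermediate trees $S_i$, with $S_0 = T'$ and limit $T$. Build an initial forest of influence $\Gamma_0$ on $T'$ grown from $S$; by \thref{res:modify_trees} the connecting groups depend only on $S$, so $W_{\mathcal{P},S}$ is unambiguous.

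Inductively produce, at each stage $i$, a seed set $S^{(i)} \subseteq S_i$ together with a forest of influence $\Gamma_i$. For each of the six fold types (IA, IB, IIA, IIB, IIIA, IIIB) we analyse whether $\alpha_i$ preserves the forest-of-influence property when we push $\Gamma_{i-1}$ forward. Most folds do; the exception is when the fold merges distinct trees of influence, as illustrated in Figure~\ref{fig:FoIExample}, in which case we add the merge vertex $v$ to the seed set. By \thref{res:branch_edge} the old connecting edge stabiliser $E$ equals the branch stabiliser, and after the fold $\stab v$ strictly contains $E$. Applying part~\ref{pt:extention_weight} of \thref{res:weight_basics}, the $\mathcal{P}$-weight of the corresponding connecting group at least halves. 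Since $\mathcal{P}$ has finite height and \thref{res:modify_trees} lets us freely swap between equivalent forests without changing weight, the drops can occur only finitely often per connecting group, so for every $i$ we maintain $W_{\mathcal{P}, S^{(i)}} \leq W_{\mathcal{P}, S}$.

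In the limit we obtain a forest of influence $\Gamma_\infty$ on $T$ grown from a seed set $S^{(\infty)}$ of finite weight. Partition the edges of $T/G$ into connecting edges and forest-of-influence edges. Each connecting edge whose stabiliser is larger than $\mathcal{P}$ has weight $1$, and through the fold history combined with $k$-acylindricity on subgroups larger than $\mathcal{P}$, accounts for at most $k$ edges of $T/G$. Each connecting edge with stabiliser inside $\mathcal{P}$ is handled directly by its weight. For the forest-of-influence edges, \thref{res:branch_edge} forces the first branch edge adjacent to any influenced vertex $v$ to share $\stab v$; closure of $\mathcal{P}$ under subgroups then ensures $\stab v$ lies in a member of $\mathcal{P}$, so the partial-reducedness hypothesis gives $v$ valence at least $3$ in $T/G$. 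A standard valence-counting argument on the quotient then bounds the number of forest-of-influence edges by roughly half the number of connecting edges, producing the $\frac{2k+1}{2}$ factor in the overall bound.

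The main obstacle is the detailed case analysis of the folds: verifying in every case that the $\mathcal{P}$-weight of the seed set does not increase, and attributing any drop correctly to the affected connecting group. A potential function combining seed-set $\mathcal{P}$-weight with an acylindricity ``debt'' for connecting edges whose corresponding paths in $T$ are not yet accounted for will most likely be needed to package the six-way fold analysis and the final valence-counting step into the single clean inequality $\#(T/G) \leq \left(\tfrac{2k+1}{2}\right) W_{\mathcal{P},S}$.
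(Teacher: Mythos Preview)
Your overall strategy matches the paper's, but the organisation has a genuine gap: you are missing the induction on $W_{\mathcal{P},S}$. The paper's proof proceeds by induction on this integer. At each fold one applies \thref{res:forests_enjoy_folds} (or \thref{res:free_action_folds} in the free case); if the weight strictly drops, one is done by the inductive hypothesis. If instead some connecting group becomes larger than $\mathcal{P}$, one applies \thref{res:collapse_connecting_edge} to collapse at most $k$ orbits of edges of $T$, reducing the weight by at least $1$, and again invokes the inductive hypothesis on the collapsed target. This induction is precisely what replaces your proposed ``potential function with acylindricity debt''; without it you are forced to track debts across the entire fold sequence, and your final paragraph concedes that this packaging is unresolved. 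The induction also makes the limit clean: in the only remaining case the weight never drops, so \thref{res:forests_enjoy_folds} guarantees each $\alpha_i$ is injective on seed vertices; since the number of seed orbits is bounded (by the number of connecting edges plus one) it eventually stabilises, and the seed sets push forward to a genuine seed set on $T$.

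Your account of where the factor $\tfrac{2k+1}{2}$ comes from is also off. The valence-and-branch argument (\thref{res:reduced_key_lemma}) yields a bound of $(2k+1)n$ edges where $n$ is the number of orbits of connecting edges --- it does not say ``forest-of-influence edges are roughly half the connecting edges''. The extra factor $\tfrac12$ enters separately: in the surviving case every connecting group lies in $\mathcal{P}$ (none is larger, else we would have collapsed) and hence has $\mathcal{P}$-weight at least $2$, giving $n \le \tfrac12 W_{\mathcal{P},S}$. Also, your valence claim is stated backwards: partial-reducedness forces high valence only at vertices whose stabiliser is \emph{contained} in a member of $\mathcal{P}$ and equals an incident edge stabiliser; vertices with stabiliser larger than $\mathcal{P}$ may well have low valence, and it is the $k$-acylindricity bounding their branch length (not valence) that controls their contribution.
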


The remainder of this section as well as the entirety of Section~\ref{sec:reduced} will be dedicated to providing the necessary tools to prove this. 

Recall that our plan involves decomposing $\Psi$ into folds. The following says that we can recursively find a nice set of seed vertices for each intermediate step.

\begin{lemma}\thlabel{res:forests_enjoy_folds}
Suppose that $\alpha: R \rightarrow \tilde{R}$. Suppose that $S$ is a non-empty set of seed vertices for $R$ where all of the connecting groups are in $\mathcal{P}$. Then there is a set of seed vertices $\tilde{S}$ for $\tilde{R}$ with $\alpha(S) \subseteq \tilde{S}$ and $W_{\mathcal{P},\tilde{S}} \leq W_{\mathcal{P},S}$. Moreover if $W_{\mathcal{P},\tilde{S}} = W_{\mathcal{P},S}$ then $\alpha|_{S}$ is injective. 
\end{lemma}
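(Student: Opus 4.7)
My plan is to construct $\tilde{S}$ and an associated forest of influence $\tilde{\Gamma} \subseteq \tilde{R}$ by case analysis on the type of the fold $\alpha$ (Types IA, IB, IIA, IIB, IIIA, IIIB following \thref{res:stallings_folding_thm}). Fix a forest of influence $\Gamma$ for $R$ grown from $S$; by \thref{res:ele_trnsfm_chain} I may apply elementary transformations to arrange $\Gamma$ conveniently with respect to $e_1$ and $e_2$ before proceeding.

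As a first attempt I take $\tilde{S}_0 := \alpha(S)$ and $\tilde{\Gamma}_0 := \alpha(\Gamma)$. Since $\tilde{R}$ is a tree, $\tilde{\Gamma}_0$ is automatically a forest containing every vertex of $\tilde{R}$, but it need not be a forest of influence. Two things can fail: the $\alpha$-images of two distinct trees of $\Gamma$ may be joined into a single component containing two seeds of $\tilde{S}_0$; and the stabiliser of $\tilde{y} := \alpha(y_1) = \alpha(y_2)$ (now generated by $\stab y_1$, $\stab y_2$, and any translating element contributed by the fold) may fail to sit inside any $\stab \tilde{u}$ for $\tilde{u} \in \tilde{S}_0$. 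I remedy the first by removing a single edge orbit from $\tilde{\Gamma}_0$, which becomes a new connecting orbit; I remedy the second by adjoining the $G$-orbit of $\tilde{y}$ to $\tilde{S}$ and growing a new tree of influence for it, introducing at most two new connecting orbits at the interfaces with the previously established trees of $\alpha(u_1)$ and $\alpha(u_2)$, where $u_i$ was the seed originally influencing $y_i$.

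The weight inequality then reduces to a comparison, orbit by orbit, of the connecting groups absorbed versus those newly introduced. The key observation, foreshadowed by Figure~\ref{fig:FoIExample}, is that any newly introduced connecting orbit has stabiliser strictly containing some $E_i \in \mathcal{P}$ (for instance $\langle E_1, E_2 \rangle$, or the intersection of $\stab \tilde{y}$ with the stabiliser of an old seed). By \thref{res:weight_basics}\ref{pt:extention_weight} its $\mathcal{P}$-weight is at most half that of the absorbed $E_i$, which dominates the bookkeeping and yields $W_{\mathcal{P},\tilde{S}} \leq W_{\mathcal{P},S}$. For the moreover clause, failure of injectivity of $\alpha|_S$ forces $\alpha$ to identify two distinct seeds of $S$, placing us in the merging case: only a single new connecting orbit is created, either in place of two old orbits (Type IA) or in place of a single old orbit whose stabiliser strictly enlarges (Types II and III), and each subcase yields a strict inequality.

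The main obstacle is keeping the orbit bookkeeping precise across Types II and III, where the hyperbolic translating element identifies distinct edges within a single $G$-orbit and a naive count would double-count both absorbed and newly created orbits. The hypothesis that every connecting group of $S$ lies in $\mathcal{P}$ is essential, since it is precisely what enables \thref{res:weight_basics}\ref{pt:extention_weight} to apply to the $E_i$ and produce the halving of weight that sustains the inequality.
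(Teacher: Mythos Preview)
Your approach is essentially the paper's: fix a forest of influence $\Gamma$ (adjusting via elementary transformations), push it through the fold, add $G\{\tilde y\}$ to the seed set when $\stab \tilde y$ escapes all existing seeds, and control weights via \thref{res:weight_basics}\ref{pt:extention_weight}. Two points are worth sharpening.

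First, the paper organises the case analysis not by the six fold types but by whether \emph{some} forest of influence (after elementary transformations) can be arranged to contain both, one, or neither of $e_1,e_2$. This is cleaner because it puts the elementary-transformation freedom to work up front: several fold types collapse into the same case, and the subcases that remain (the paper's 3ai--3bii) hinge on whether $e_1$ is the first branch edge of $y_1$ and whether $\stab y_1 \leq \stab y_2$, not on the A/B distinction.

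Second, your sentence ``any newly introduced connecting orbit has stabiliser strictly containing some $E_i \in \mathcal{P}$'' is the entire content of the lemma, and your sketch does not explain \emph{why} the containment is strict. The paper's mechanism is: if a new connecting edge $f$ (the first branch edge of $y_i$, say) had $\stab f = \stab e_i$, then an elementary transformation swapping $f$ for $e_i$ would produce a forest of influence containing $e_i$, contradicting the case hypothesis. This reduction-to-a-previous-case argument is what forces strictness and hence the halving. Relatedly, your claim of ``at most two new connecting orbits'' undercounts: in the case where neither $e_i$ lies in any $\Gamma$, one may introduce three new connecting orbits ($e'$ together with both first-branch edges $f_1,f_2$), and the inequality still closes only because two old orbits $e_1,e_2$ are absorbed and $W_{e'} \leq \max(W_{e_1},W_{e_2})$ while each $W_{f_i} \leq \tfrac12 W_{e_i}$. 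The bookkeeping genuinely differs across fold types I/II/III here, so the type-by-type split you anticipate does reappear inside this case.
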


\begin{proof}
Suppose that $\alpha$ folds together the edges $e_1 = [x,y_1]$ and $e_2 = [x,y_2]$. Suppose $\alpha(e_1) = \alpha(e_2) = e'$ and $\alpha(y_1) = \alpha(y_2) = y'$. Let $y_i$ be in the tree of influence of $u_i$ and if $y_i \neq u_i$ we also let $f_i$ be the first edge in the branch of $y_i$. Throughout we will assume that $y_i \neq u_i$ and so $f_i$ exists. The cases where $y_i = u_i$ turn out to be essentially the same except the lack of $f_i$ sometimes causes $W_{\mathcal{P}, \tilde{S}}$ to be smaller. We will split into cases depending on if there is a forest of influence containing $e_1$ and/or $e_2$. 

\begin{proof}[Case 1]
There is a forest of influence $\Gamma$ containing both $e_1$ and $e_2$. \\

\begin{figure}[h!]
\centering
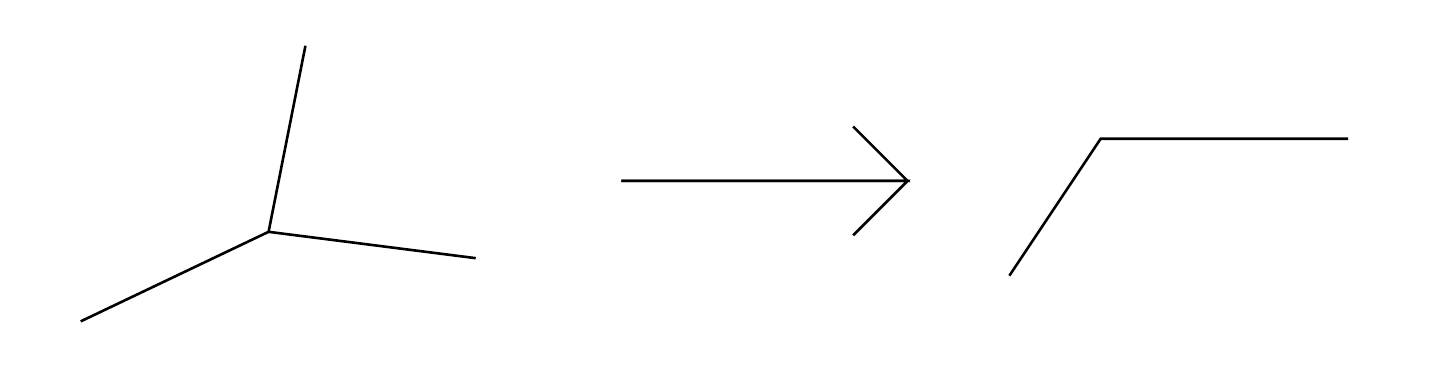 \\
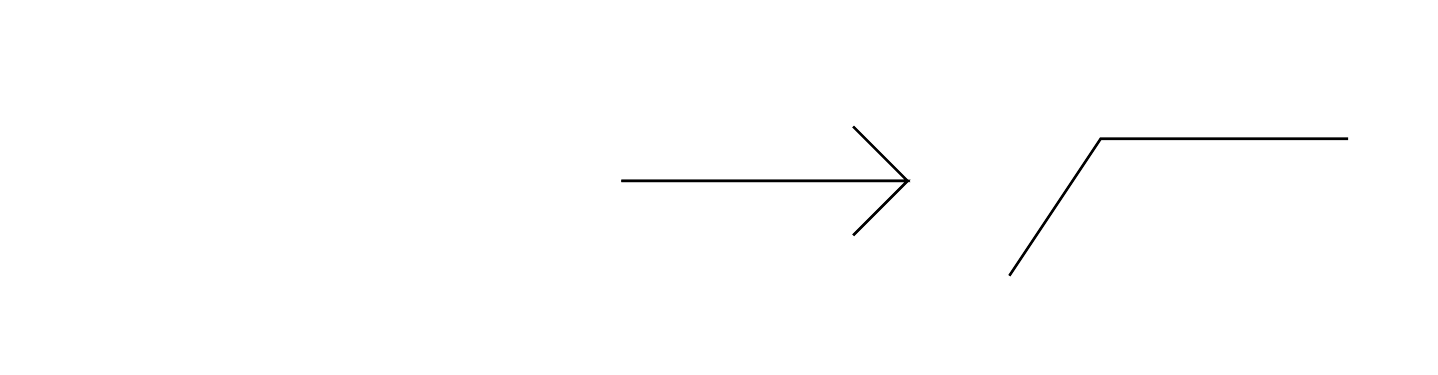
\caption{The two pictures that can arise when $\Gamma$ contains both $e_1$ and $e_2$. In either case $\stab u_1$ contains $\stab x$, $\stab y_1$ and $\stab y_2$ and hence also contains $\stab y'$.}
\end{figure}

The fold cannot be of type III as otherwise $y_1$ and $y_2 = hy_1$ would be in the same tree of influence. So $\alpha(\Gamma)$ is a forest of influence for $\tilde{R}$ which is grown from $\alpha(S)$. The connecting edges of $\Gamma$ are untouched by $\alpha$ and so $S$ and $\alpha(S)$ have the same connecting groups. 
\end{proof} 

\begin{proof}[Case 2]
There is no forest of influence containing either $e_1$ or $e_2$. \\

\begin{figure}[h!]
\centering
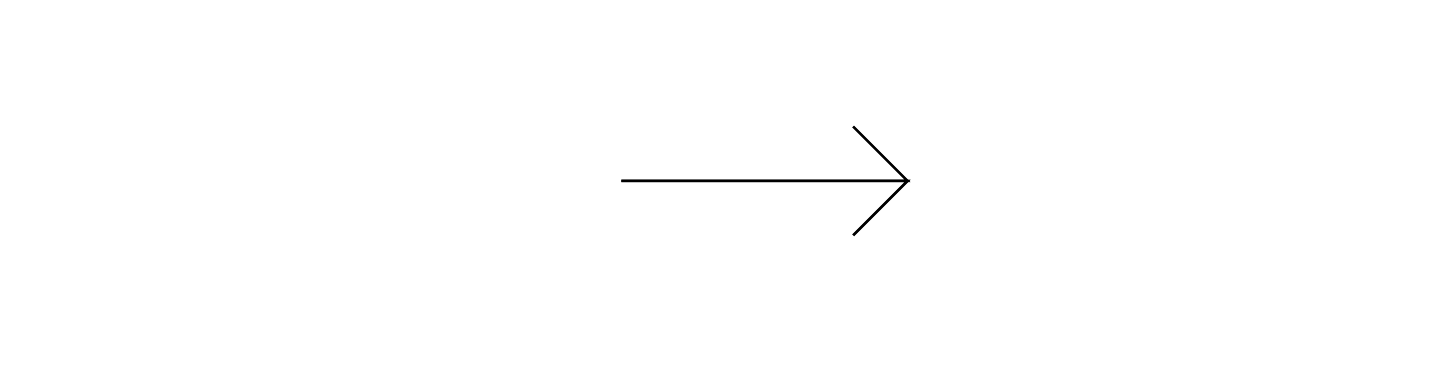
\caption{The picture that arises when $\Gamma$ cannot possibly contain either $e_1$ or $e_2$. Observe that $\stab e_i < \stab u_i$. After applying the fold we remove the edges $f_i$ (if they exist) from the forest of influence. Separate arguments depending on the type of fold are now required to show that this doesn't increase the $\mathcal{P}$-weight.}
\end{figure}

Pick any forest of influence $\Gamma$. Observe that $\tilde{S} = \alpha(S) \cup G \left\lbrace e' \right\rbrace$ is a set of seed vertices which grows into $\alpha(\Gamma) \setminus G \left\lbrace f_1, f_2 \right\rbrace$. Since $e_i$ is not contained in a forest of influence we must have $e_i < f_i$ as otherwise we could apply an elementary transformation to get it into one. So $W_{f_i,\mathcal{P}} \leq \frac{1}{2}W_{e_i,\mathcal{P}}$ by \thref{res:weight_basics}~\ref{pt:extention_weight}. 

If $\alpha$ is a fold of type I then $e_1$, $e_2$, $f_1$ and $f_2$ are pairwise inequivalent. Moreover $e'$ contains the image of both $e_1$ and $e_2$, so $W_{e',\mathcal{P}} \leq \max(W_{e_1,\mathcal{P}}, W_{e_2,\mathcal{P}})$ by \thref{res:weight_basics}~\ref{pt:extention_weight} and hence
\begin{eqnarray*}
W_{S,\mathcal{P}} - W_{\tilde{S},\mathcal{P}}   &  =   &   W_{e_1,\mathcal{P}} + W_{e_2,\mathcal{P}} - W_{f_1,\mathcal{P}} - W_{f_2,\mathcal{P}} - W_{e',\mathcal{P}} \\
                                                & \geq &   \frac{1}{2}W_{e_1,\mathcal{P}} + \frac{1}{2}W_{e_2,\mathcal{P}} - \max(W_{e_1,\mathcal{P}}, W_{e_2,\mathcal{P}}) \\
                                                & \geq &   0
\end{eqnarray*}

If $\alpha$ is a fold of type II then $e_1$ is equivalent to $e_2$ and $f_1$ is equivalent to $f_2$. Additionally $\stab e' > \stab e_1$. So by \thref{res:weight_basics}~\ref{pt:extention_weight} we have 
\begin{eqnarray*}
W_{S,\mathcal{P}} - W_{\tilde{S},\mathcal{P}}   &  =   &   W_{e_1,\mathcal{P}} - W_{f_1,\mathcal{P}} - W_{e',\mathcal{P}} \\
                                                & \geq &   W_{e_1,\mathcal{P}} - \frac{1}{2}W_{e_1,\mathcal{P}} - \frac{1}{2}W_{e_1,\mathcal{P}} \\
                                                &  =   &   0
\end{eqnarray*}

Now assume $\alpha$ is a fold of type III. We see that $f_1$ and $f_2$ are equivalent, while $e_1$ and $e_2$ are inequivalent. Thus 
\begin{eqnarray*}
W_{S,\mathcal{P}} - W_{\tilde{S},\mathcal{P}}   &  =   &   W_{e_1,\mathcal{P}} + W_{e_2,\mathcal{P}} - W_{f_1,\mathcal{P}} - W_{e',\mathcal{P}} \\
                                                & \geq &   W_{e_1,\mathcal{P}} - W_{f_1,\mathcal{P}} \\
                                                & \geq &   W_{e_1,\mathcal{P}} - \frac{1}{2}W_{e_1,\mathcal{P}} \\
                                                &  >   &   0
\end{eqnarray*}
\end{proof}

\begin{proof}[Case 3]
There is a forest of influence $\Gamma$ containing $e_1$ but not $e_2$; also there isn't one which contains both of them. \\

Note that $\alpha$ cannot be a fold of type II (as then the $e_i$ are equivalent) or type IIIB (so both $e_i$ are always connecting edges). We will split into four subcases; corresponding to combinations whether or not $e_1$ is equal to $f_1$ and whether or not $\stab y_1$ is a subgroup of $\stab y_2$. 

\begin{proof}[Case 3ai]
We have $e_1 = f_1$ and $\stab y_1 \leqslant \stab y_2$. \\

\begin{figure}[h!]
\centering
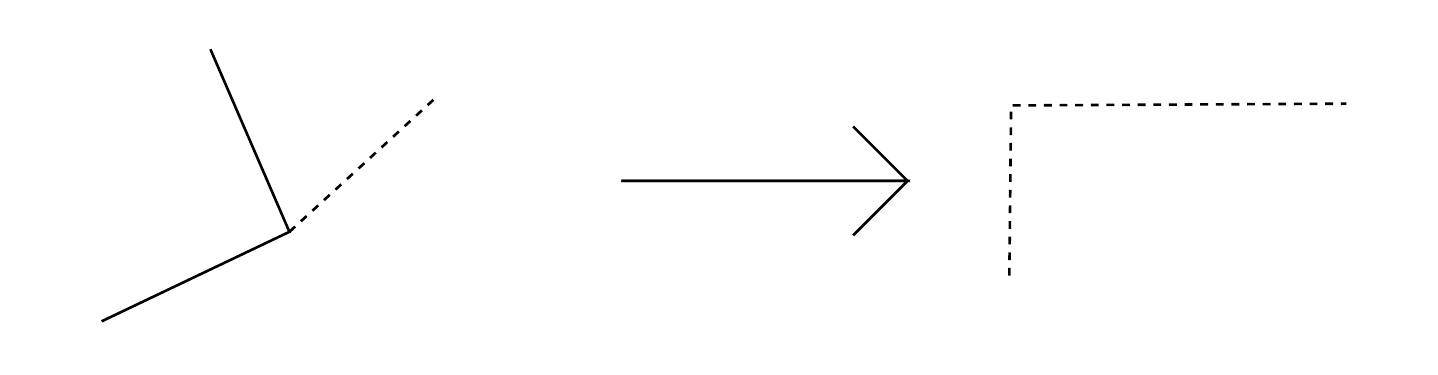
\caption{The picture that arises when $\Gamma$ contains $e_1$ but not $e_2$, the branch of $y_1$ contains $x$ and $\stab y_2$ contains $\stab y_1$. Here we take the image of each connecting edge to still be a connecting edge.}
\end{figure}

If the fold is of type I then observe that $\tilde{S} := \alpha(S)$ is a set of seed vertices for $\tilde{R}$. Observe that the image of the the connecting edges of $\Gamma$ are the connecting edges of a forest of influence grown from $\tilde{S}$. If instead the fold is of type IIIA then $\tilde{S} := \alpha(S) \cup G \left\lbrace y' \right\rbrace$ is a set of seed vertices. In this case we have a forest of influence $\tilde{\Gamma} := \alpha(\Gamma) \setminus G \left\lbrace f_1 \right\rbrace$. The connecting edges of $\tilde{\Gamma}$ are the image of the connecting edges of $\Gamma$ with the orbit of $e_2$ removed and the orbit of $\alpha(f_1)$ added. Observe that $\stab e_2 \leqslant \stab f_1$. Hence the $\mathcal{P}$--weight can't increase in either case.
\end{proof}

\begin{proof}[Case 3aii]
We have $e_1 = f_1$ and $\stab y_1$ is not contained in $\stab y_2$. \\

\begin{figure}[h!]
\centering
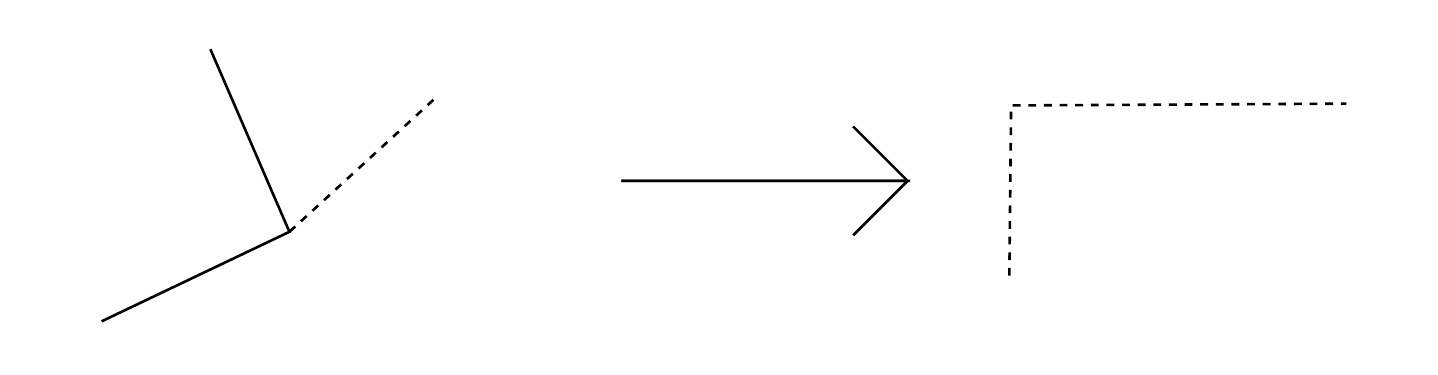
\caption{The picture that arises when $\Gamma$ contains $e_1$ but not $e_2$, the branch of $y_1$ contains $x$ and $\stab y_2$ doesn't contain $\stab y_1$. Here we take the new connecting edges to be the image of the old connecting edges together with $f_2$ (if it exists).}
\end{figure}

If the fold if type IIIA then proceed as in case 3ai. Otherwise observe $\tilde{S} = \alpha(S) \cup G \left\lbrace y' \right\rbrace$ is a set of seed vertices for $\tilde{R}$ and that $\tilde{\Gamma} = \alpha(\Gamma) \setminus G \left\lbrace e', f_2 \right\rbrace$ is a forest of influence grown from $\tilde{S}$. Since $\stab y_1$ is not contained in $\stab y_2$ and $\stab y_1 = \stab e_1$ we have $\stab e_2 < \stab e'$. We also have $\stab e_2 < \stab f_2$ because otherwise we could apply an elementary transformation to $\Gamma$ to get a new forest of influence which is in case 1. Hence by \thref{res:weight_basics}~\ref{pt:extention_weight}
\begin{eqnarray*}
W_{S,\mathcal{P}} - W_{\tilde{S},\mathcal{P}}   &  =   &   W_{e_2,\mathcal{P}} - W_{f_2,\mathcal{P}} - W_{e',\mathcal{P}} \\
                                                & \geq &   W_{e_2,\mathcal{P}} - \frac{1}{2}W_{e_2,\mathcal{P}} - \frac{1}{2}W_{e_2,\mathcal{P}} \\
                                                &  =   &   0 
\end{eqnarray*}
\end{proof}

\begin{proof}[Case 3bi]
We have $e_1 \neq f_1$ and $\stab y_1 \leqslant \stab y_2$. \\

\begin{figure}[h!]
\centering
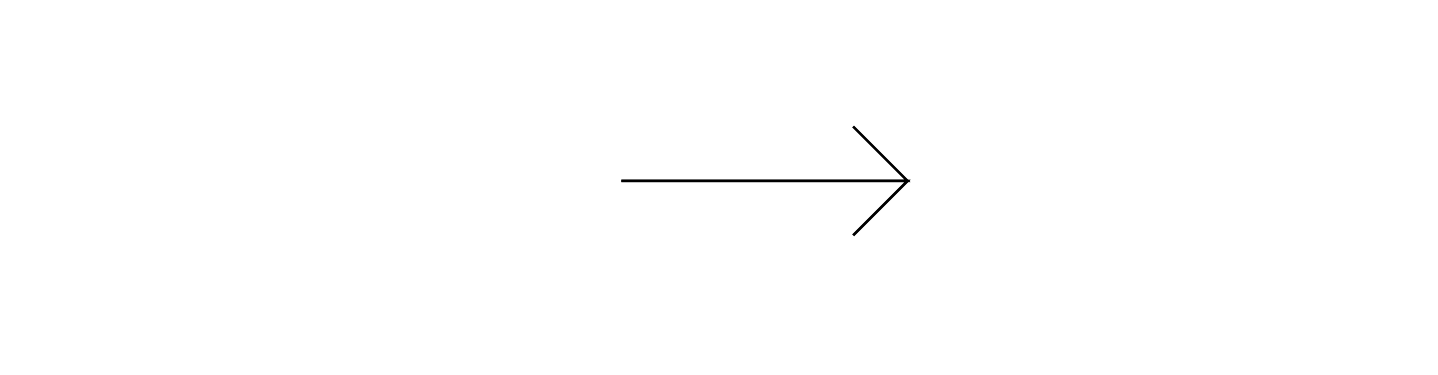
\caption{The picture that arises when $\Gamma$ contains $e_1$ but not $e_2$, the branch of $y_1$ doesn't contain $x$ and $\stab y_2$ contains $\stab y_1$.  Here we take the new forest of influence to be the image of the old one with $f_1$ removed (if it exists).}
\end{figure}

If the fold is of type I then $\tilde{S} := \alpha(S)$ is a set of seed vertices for $\tilde{R}$. If instead the fold is of type IIIA then $\tilde{S} := \alpha(S) \cup G \left\lbrace y' \right\rbrace$ is a set of seed vertices instead. In either case observe that $\alpha(\Gamma) \setminus G \left\lbrace f_1 \right\rbrace$ is a forest of influence grown from $\tilde{S}$. (Note that if the fold is of type IIIA then $f_1$ and $f_2$ are in a common orbit, so $f_2$ also becomes a connecting edge.) Since $\stab e_2 \leqslant \stab x \leqslant \stab f_1$ we have $W_{\tilde{S},C} \leq W_{S,C}$.
\end{proof}

\begin{proof}[Case 3bii]
We have $e_1 \neq f_1$ and $\stab y_1$ is not contained in $\stab y_2$. \\

\begin{figure}[h!]
\centering
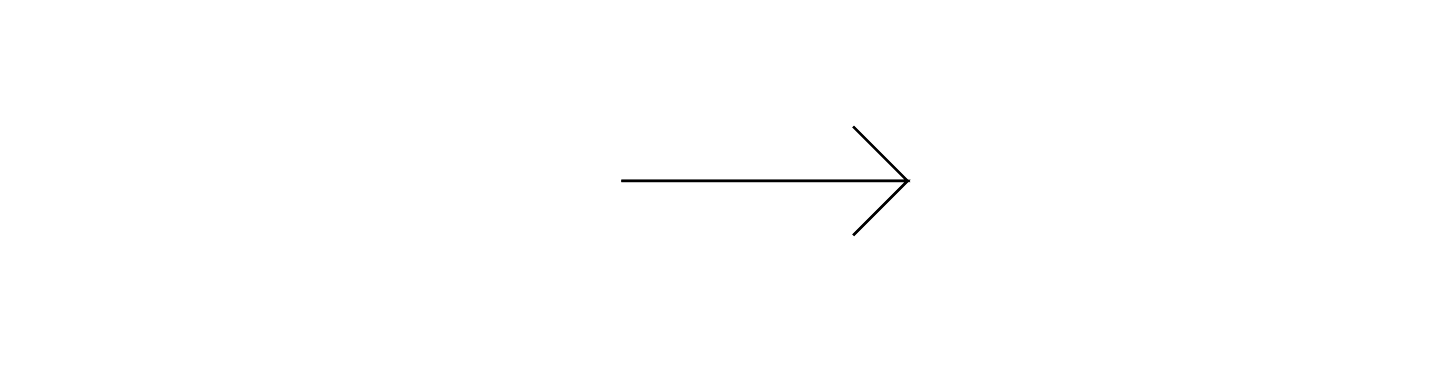
\caption{The picture that arises when $\Gamma$ contains $e_1$ but not $e_2$, the branch of $y_1$ doesn't contain $x$ and $\stab y_1$ doesn't contain $\stab y_2$. Here we take the new forest of influence to be the image of the old one with both $f_1$ and $f_2$ removed (if they exist).}
\end{figure}

If the fold is of type IIIA then proceed as in case 3bi. Otherwise observe that $\tilde{S} = \alpha(S) \cup G \left\lbrace y' \right\rbrace$ is a set of seed vertices which grows into a forest of influence $\tilde{\Gamma} = \alpha(\Gamma) \setminus G \left\lbrace f_1, f_2 \right\rbrace$. If $\stab e_2 = \stab f_2$ then we could apply an elementary transformation to get both $e_1$ and $e_2$ in the same forest of influence and so we are in case 1; hence $\stab f_2 < \stab e_2$. Also since $\stab y_1$ is not contained in $\stab y_2$  and $\stab x \leqslant \stab y_1$ we have 
\begin{eqnarray*}
\stab e_2 & = & \stab y_1 \cap \stab y_2 \\
          & < & \stab y_1 \\
          & = & \stab f_1.
\end{eqnarray*}
Hence by \thref{res:weight_basics}~\ref{pt:extention_weight} 
\begin{eqnarray*}
W_{S,\mathcal{P}} - W_{\tilde{S},\mathcal{P}}   &  =   &   W_{e_2,\mathcal{P}} - W_{f_1,\mathcal{P}} - W_{f_2,\mathcal{P}} \\
                                                & \geq &   W_{e_2,\mathcal{P}} - \frac{1}{2}W_{e_2,\mathcal{P}} - \frac{1}{2}W_{e_2,\mathcal{P}} \\
                                                &  =   &   0            \:\:\:\:\:\:\:\:\:\:\:\:\:\:\:\:\:\:\:\:\:\:\:\:\:\:\:\:\:\:\:\:\:\:\:\:\:\:\:\:\:\:\:\:\:\:\:\:\:\:\:\:\:\:\:\:\:\:\:\:\:\:\:\:\:\:              \square
\end{eqnarray*}
\end{proof}
\end{proof}
\end{proof}

Recall that a free action is a special case as we allow the set of seed vertices to be empty. Thus we must deal with this case separately.

\begin{lemma}\thlabel{res:free_action_folds}
Suppose that $\alpha: R \rightarrow \tilde{R}$ is a fold and $S$ is a set of seed vertices for $R$. Suppose also that $\mathcal{P}$ has finite height. If $G$ acts freely on $R$ but not on $\tilde{R}$ then there is a set of seed vertices $\tilde{S}$ for $\tilde{R}$ such that $W_{\mathcal{P},\tilde{S}} \leq W_{\mathcal{P},S}$. 
\end{lemma}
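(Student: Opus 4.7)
The plan is to first narrow down which fold types can transform a free action into a non-free one, and then construct $\tilde{S}$ exploiting the resulting structure. Since every vertex and edge stabilizer in $R$ is trivial, a Type I fold produces a new vertex stabilizer $\langle Y_1, Y_2 \rangle$ or $\langle X, g^{-1}Y_2 g \rangle$, both trivial, so freeness is preserved. A Type II fold would require some $h \in X = \{1\}$ satisfying $he_1 = e_2$, forcing $e_1 = e_2$, a contradiction. Hence $\alpha$ must be of Type III, and there exists $h \in G$ acting hyperbolically on $R$ with $hy_1 = y_2$; after the fold, this $h$ fixes the identified vertex $y' := [y_1] = [y_2]$, producing the cyclic stabilizer $\langle h \rangle$.

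I would set $\tilde{S} := G \cdot y'$. In Type IIIA the only non-trivial vertex stabilizer in $\tilde{R}$ is at $y'$ (and its orbit); in Type IIIB, the element $g$ satisfying $gx = y_1$ moves $[x]$ into the orbit $G \cdot y'$ as well, so $[x]$ is automatically a seed vertex. Either way every vertex of $\tilde{R}$ with non-trivial stabilizer lies in $\tilde{S}$, which is therefore a valid set of seed vertices with $|\tilde{S}/G| = 1$. All edge stabilizers in $\tilde{R}$ remain trivial, since the only new edge $e'$ has stabilizer $\langle E_1, E_2 \rangle = \{1\}$.

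To compute $W_{\mathcal{P}, \tilde{S}}$, I would take $\Gamma$ to be the preimage of any spanning tree $T_0$ of $\tilde{R}/G$ containing $[y']$. Standard covering arguments show that each component of $\Gamma$ is a lift of $T_0$ containing exactly one representative of $G \cdot y'$, so $\Gamma$ is a forest of influence grown from $\tilde{S}$. The connecting-edge orbits are precisely the edges of $\tilde{R}/G$ outside $T_0$, numbering $\beta_1(\tilde{R}/G)$, each contributing $W_{\mathcal{P},1} = 2^M$. Since a Type III fold keeps $|V(\tilde{R}/G)|$ fixed but reduces $|E(\tilde{R}/G)|$ by one, $\beta_1(\tilde{R}/G) = \beta_1(R/G) - 1$, giving $W_{\mathcal{P}, \tilde{S}} = (\beta_1(R/G) - 1) \cdot 2^M$.

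Finally, compare with $W_{\mathcal{P}, S}$. If $S = \emptyset$ the definition gives $W_{\mathcal{P}, S} = (\beta_1(R/G) - 1) \cdot 2^M = W_{\mathcal{P}, \tilde{S}}$. If $S \ne \emptyset$, the analogous count on the free $R$-action gives $W_{\mathcal{P}, S} = (\beta_1(R/G) + |S/G| - 1) \cdot 2^M > W_{\mathcal{P}, \tilde{S}}$ since $|S/G| \geq 1$. Either way the desired inequality holds. The main subtlety I expect is confirming that the Type III classification is genuinely exhaustive, and that the connecting-edge count transfers cleanly to the non-free post-fold action — in particular verifying that $\tilde{S} = G \cdot y'$ really captures every non-trivial vertex stabilizer in the Type IIIB subcase.
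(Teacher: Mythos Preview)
Your proof is correct and follows essentially the same approach as the paper: identify that the fold must be of Type III, take $\tilde{S}$ to be the orbit of the single vertex with non-trivial stabiliser, and compare weights using that all connecting groups are trivial and $\beta_1$ drops by one. You supply more detail than the paper does (explicitly ruling out Types I and II, and checking the IIIB subcase), but the argument is the same.
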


\begin{proof}
Recall from \thref{def:p_weight} that we must have $W_{\mathcal{P},S} \geq (\beta_1(R/G) - 1)W_{\mathcal{P},1}$. As the action on $\tilde{R}$ is non-free and $\alpha$ is a fold it follows that $\alpha$ is a fold of type III. Moreover all the edge stabilisers of $\tilde{R}$ are trivial and there is a single vertex $u$ (up to equivalence) with a non-trivial stabiliser. We define $\tilde{S} := G \left\lbrace u \right\rbrace$. Since all the connecting groups of $\tilde{S}$ are trivial we get that $W_{\mathcal{P},\tilde{S}} = \beta_1(\tilde{R}/G)W_{\mathcal{P},1}$. Since $\alpha$ is a fold of type III we have $\beta_1(\tilde{R}/G) = \beta_1(R/G) - 1$ and so the result follows. $\square$
\end{proof}

Now suppose that we have a map $\Psi: T \rightarrow T'$ where $T$ has a set of seed vertices $S$ and the action on $T'$ is $k$-acylindrical on groups larger than $\mathcal{P}$. If a connecting group in $S$ is larger than $\mathcal{P}$ then there are seed vertices, say $u_1$ and $u_2$, whose images in $T'$ are separated by distance at most $k$. Since we have control of the length of the path between these images we wish to collapse it to avoid unnecessary extra counting. However in general the image of the path between $u_1$ and $u_2$ need not lie in the path between $\Psi(u_1)$ and $\Psi(u_2)$. For our core argument to work we require this containment and the following says we can do this with some extra folds.

\begin{lemma}\thlabel{res:collapse_connecting_edge}
Let $\Psi: T \rightarrow T'$ be a simplical map where the action on $T'$ is $k$-acylindrical on groups larger than $\mathcal{P}$. Let $S$ be a set of seed vertices for $T$ where at least one of the connecting groups are larger than $\mathcal{P}$. Then there are $\overline{T}$, $\overline{T'}$ and a simplical $\overline{\Psi}: \overline{T} \rightarrow \overline{T'}$ such that the action on $\overline{T'}$ is $k$-acylindrical on groups larger than $\mathcal{P}$, there's a set of seed vertices $\overline{S}$ for $\overline{T}$ with $W_{\mathcal{P},\overline{S}} \leq W_{\mathcal{P},S'} - 1$ and $T'$ has at most $k$ more edges than $\overline{T'}$.
\end{lemma}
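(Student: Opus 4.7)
The plan is to use acylindricity to locate a short reduced path in $T'$, fold $T$ so that the corresponding path in $T$ becomes reduced of the same length, and then simultaneously collapse both paths together with their $G$--orbits. Fix a forest of influence $\Gamma$ grown from $S$ and a connecting edge $e$ of $\Gamma$ whose stabiliser is larger than $\mathcal{P}$; its endpoints $v_1, v_2$ sit in the trees of influence of seed vertices $u_1, u_2$, and since $\stab e \leqslant \stab v_i \leqslant \stab u_i$, the group $\stab e$ fixes both $\Psi(u_1)$ and $\Psi(u_2)$, hence every edge of the reduced path $p$ from $\Psi(u_1)$ to $\Psi(u_2)$ in $T'$. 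As $\stab e$ is larger than $\mathcal{P}$, $k$--acylindricity forces $|p| \leq k$.

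Let $P$ be the reduced path from $u_1$ to $u_2$ in $T$. Applying the $(\star)$ argument from the proof of \thref{res:stallings_folding_thm} to $\Psi|_{P}$, I factor $\Psi = \tilde\Psi \circ \alpha$ via finitely many folds $\alpha: T \to \tilde T$ so that $\tilde\Psi$ is injective on $\alpha(P)$; thus $\tilde\Psi(\alpha(P)) = p$, making $\alpha(P)$ reduced of length at most $k$. Iterated application of \thref{res:forests_enjoy_folds} gives a seed vertex set $\tilde S \supseteq \alpha(S)$ for $\tilde T$ with $W_{\mathcal{P},\tilde S} \leq W_{\mathcal{P},S}$, and since $\stab \alpha(e)$ is larger than $\mathcal{P}$ (so of $\mathcal{P}$--weight $1$) the edge $\alpha(e)$ necessarily remains a connecting edge. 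Now form $\overline{T'}$ by collapsing each component of $G \cdot p$ in $T'$ to a point, and $\overline T$ by collapsing each component of $G \cdot \alpha(P)$ in $\tilde T$ to a point; the composition $\tilde T \to T' \to \overline{T'}$ sends each collapsed component of $\tilde T$ into a single vertex of $\overline{T'}$, so it descends to a simplicial $\overline\Psi: \overline T \to \overline{T'}$.

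For the three conclusions: (a) at most $k$ orbits of edges are removed from $T'/G$ since $p$ has at most $k$ edges; (b) a reduced path in $\overline{T'}$ of length exceeding $k$ fixed by some $H$ larger than $\mathcal{P}$ lifts to a reduced path in $T'$ of at least the same length fixed by $H$ (at each collapsed segment, $H$ fixes the two endpoints of the adjacent fixed edges and hence the sub-arc between them), contradicting acylindricity of $T'$; (c) taking $\overline S$ to be the image of $\tilde S$ under the collapse, any $g \in \stab \alpha(u_i)$ sends $\alpha(P)$ to a translate sharing the vertex $\alpha(u_i)$ and therefore setwise stabilises the whole collapsed component, so $\stab \alpha(u_i) \leqslant \stab \overline u$ for the merged vertex $\overline u$; thus $\overline S$ is a valid set of seed vertices, and the collapse removes exactly the $\mathcal{P}$--weight--$1$ orbit of $\alpha(e)$ from the connecting groups, giving $W_{\mathcal{P}, \overline S} \leq W_{\mathcal{P}, \tilde S} - 1 \leq W_{\mathcal{P}, S} - 1$.

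The main obstacle is (c): one must check that no other orbit of connecting edges of $\tilde T$ is inadvertently collapsed (so the weight drops by exactly $1$, not more), and carefully handle the case when $u_1$ and $u_2$ lie in a common $G$--orbit, where $P$ projects to a loop in $T/G$ and distinct translates of $\alpha(P)$ may share sub-paths in $\tilde T$.
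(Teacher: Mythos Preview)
Your overall strategy---fold along the reduced path $P=\gamma$ between the two relevant seed vertices so that its image becomes short, then collapse---is exactly the paper's. The gap is your appeal to \thref{res:forests_enjoy_folds} to obtain $\tilde S$ with $W_{\mathcal{P},\tilde S}\le W_{\mathcal{P},S}$. That lemma hypothesises that \emph{all} connecting groups of $S$ lie in $\mathcal{P}$, and you are precisely in the situation where one of them, namely $\stab e$, is larger than $\mathcal{P}$. Without this hypothesis the halving estimates in its proof break down: in Case~3aii, for instance, one needs $\stab e_2\in\mathcal{P}$ to deduce $W_{\mathcal{P},e'}\le\tfrac12 W_{\mathcal{P},e_2}$ and $W_{\mathcal{P},f_2}\le\tfrac12 W_{\mathcal{P},e_2}$, and with $W_{\mathcal{P},e_2}=1$ the displayed difference becomes $1-1-1=-1$. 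Hence the chain $W_{\mathcal{P},\overline S}\le W_{\mathcal{P},\tilde S}-1\le W_{\mathcal{P},S}-1$ is not justified as written.

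The paper sidesteps this by never producing seed vertices for the intermediate tree $\tilde T$ at all. It simply takes $\overline S$ to be the image of $S$ in $\overline T$ and notes that the images of the connecting edges of $\Gamma$, with the orbit of $e$ removed, are the connecting edges of a forest of influence for $\overline S$. The reason this works is that $\gamma$ lies inside the union of the tree of influence of $u_1$, the edge $e$, and the tree of influence of $u_2$; the folds from $(\star)$ identify only edges of $\gamma$ (and their $G$--translates), so every vertex whose stabiliser is enlarged lies in $G\cdot\rho(\gamma)$ and hence, after the collapse, in the orbit of the collapsed vertex---which belongs to $\overline S$. Vertices outside $G\cdot\gamma$ keep their stabilisers and remain dominated by the image of their old seed vertex. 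Your closing worries (extra connecting edges being collapsed, or $u_1,u_2$ lying in a common orbit) are not real obstacles: either event only makes the weight drop further, which is permitted by the inequality you need.
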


\begin{proof}
Now suppose that $S$ has a connecting edge $e$ of $\mathcal{P}$--weight $1$ (in some forest of influence $\Gamma$). Suppose this edge connects the trees of influence of $u_1$ and $u_2$. Let $\gamma$ be the reduced edge path between $u_1$ and $u_2$. Recall statement $(\star)$ from the proof of Stallings folding theorem (\thref{res:stallings_folding_thm}) and apply it to $\gamma$. We get a composition of folds $\rho: T \rightarrow \tilde{T}$ so that the induced map $\tilde{\Psi}: \tilde{T} \rightarrow T'$ is locally injective on $\rho(\gamma)$. The intersection of the stabilisers for $u_1$ and $u_2$ is larger than $\mathcal{P}$ since $e$ has $\mathcal{P}$--weight $1$. So since the action on $T$ is $k$--acylindrical on groups larger than $\mathcal{P}$ the distance between the $\rho(u_i)$ is at most $k$. Hence we can collapse at most $k$ edges of $T'$ to get a new tree $\overline{T'}$ and an induced simplicial map $\overline{\Psi}: \overline{T} \rightarrow \overline{T'}$. Observe that $\overline{T}$ has a set of seed vertices $\overline{S}$, the image of $S$, with $W_{\mathcal{P},\overline{S}} \leq W_{\mathcal{P},S} - 1$ as the image of the connecting edges of $\Gamma$ except the orbit of $e$ is a set of connecting edges for $\overline{S}$.
\end{proof}

\section{Building partially reduced trees}\label{sec:reduced}

It remains to bound the number of edges of a $k$--acylindrical action on a tree given a set of seed vertices.

\begin{lemma}\thlabel{res:reduced_key_lemma}
Let $\mathcal{P}$ be a class of subgroups for a group $G$ which is closed under conjugation. Suppose $G$ acts on a tree $T$ and that this action is both partially-reduced on $\mathcal{P}$ and $k$--acylindrical on groups larger than $\mathcal{P}$. Suppose that $S$ is a non-empty set of seed vertices for $T$ with $n$ orbits of connecting edges. Then $T/G$ has at most $(2k+1)n$ edges. Furthermore if $T$ is reduced and $k>1$ then $T/G$ has at most $2kn$ edges.
\end{lemma}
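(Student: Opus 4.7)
My plan is to fix a forest of influence $\Gamma$ grown from $S$ — chosen to minimise total branch length — and to bound the number of $\Gamma$-edge orbits in terms of the branches from connecting-edge endpoints. The key claim is that every $\Gamma$-edge orbit lies on a branch of length at most $k$ from some connecting-edge endpoint to its seed. Combined with the $n$ connecting-edge orbits this will give the bound $(2k+1)n$ on the edges of $T/G$.

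\emph{Step 1: no wasted edges.} I would first argue via minimality that every $\Gamma$-edge lies on some such branch. If an edge $e$ of $\Gamma$ in a tree of influence $T_u$ lies on no branch from any connecting-edge endpoint, then splitting $T_u$ at $e$ produces a ``hanging'' subtree $T_u^1$ containing no connecting-edge endpoint (otherwise its branch would pass through $e$), and hence no incident connecting edges. The $G$-orbit of $T_u^1 \cup \{e\}$ is then a disjoint union of subtrees attached to the rest of $T$ at single vertices, and removing it yields a proper $G$-invariant subtree, contradicting minimality. In particular, every orbit of $\Gamma$-edges can be charged to some branch.

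\emph{Step 2: branch length bound.} I would then bound each branch $v = v_0, v_1, \ldots, v_m = u$ by $k$. The branch is fixed pointwise by $\stab v$. If $\stab v$ is larger than $\mathcal{P}$, the bound $m \leq k$ follows immediately from $k$-acylindricity. If $\stab v \leq H$ for some $H \in \mathcal{P}$, acylindricity is silent, and I would instead invoke partial-reduction. Along the branch every $\stab v_i$ contains $\stab v$ and is contained in $\stab u$; arguing that some interior $v_i$ must satisfy $\stab v_i = \stab e_i$ (or $\stab e_{i+1}$), partial-reduction on $\mathcal{P}$ forces $v_i/G$ to have valence $\geq 3$, producing an extra edge at $v_i$ distinct from $e_i$ and $e_{i+1}$. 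This extra edge would permit an elementary transformation (via \thref{res:modify_trees}) that shortens the branch without changing $S$ or the connecting groups, contradicting the minimising choice of $\Gamma$.

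\emph{Step 3: counting and the reduced case.} With each branch of length $\leq k$ and every $\Gamma$-edge orbit on some branch, there are at most $2k$ orbits of $\Gamma$-edges per connecting-edge orbit (its two branches), yielding at most $n + 2kn = (2k+1)n$ edges in $T/G$. When $T$ is fully reduced and $k > 1$, partial-reduction applies at every vertex, not only on $\mathcal{P}$. Applying it at the connecting-edge endpoints themselves — where the connecting edge itself is an extra third edge — I would refine Step 2 to show that one of the two branches at each connecting edge can be chosen of length $\leq k-1$, saving $n$ edges in total and improving the bound to $2kn$.

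\emph{Main obstacle.} The core difficulty is Step 2 in the case $\stab v \leq H \in \mathcal{P}$. Here $k$-acylindricity gives no direct control, and one must combine partial-reduction with elementary transformations of $\Gamma$ to argue that a minimising forest of influence has all branches of length $\leq k$. The delicate part is guaranteeing that the extra edge produced by partial-reduction really does allow the branch to be shortened, i.e., that the relevant edge stabilisers match up to enable the elementary transformation; managing this interplay between the stabiliser chain along the branch and the flexibility in choosing $\Gamma$ is the technical heart of the argument.
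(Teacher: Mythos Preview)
Your Step~2 has a genuine gap in the case where $\stab v$ lies in a member of $\mathcal{P}$, and it cannot be repaired along the lines you suggest. The issue is that partial-reduction only tells you that certain vertices have valence $\geq 3$ in $T/G$; it gives no control over branch length. The third edge it produces at an interior vertex $v_i$ of the branch may well be another $\Gamma$-edge (carrying a different branch into the same seed), or a connecting edge whose stabiliser is strictly smaller than $\stab v_i$; in neither case is an elementary transformation available. Even when an elementary transformation at $v_i$ \emph{is} available, it reroutes $v_i$ (and everything below it) to a different seed, and the new branch can be just as long. Concretely, take $\mathcal{P}$ to contain all subgroups of $G$, so that acylindricity is vacuous and partial-reduction is the ordinary reduced condition, and let $T/G$ be a large trivalent tree whose leaves are the seed vertices. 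This is reduced, there are $L-1$ connecting-edge orbits, and yet branches from connecting-edge endpoints have length on the order of the diameter of $T/G$, which is unbounded in $k$. No choice of $\Gamma$ fixes this: in a balanced binary tree every branch has length $\sim\log L$.

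The paper's proof avoids this by \emph{not} trying to bound all branches by $k$. Instead it isolates a maximal subtree $R$ of $T$ consisting of edges with small stabiliser (contained in a member of $\mathcal{P}$) and bounds the number of edges of $R/G$ purely combinatorially, using the valence constraint from partial-reduction together with an Euler-characteristic count. Only the branches of vertices in $A=R\cap F$ --- seed vertices and vertices whose stabiliser is \emph{larger} than $\mathcal{P}$ --- are bounded by $k$, and there acylindricity applies directly. The final bound arises by collapsing $R$ together with these branches and inducting. This separation into a combinatorially-controlled small-stabiliser region and an acylindrically-controlled large-stabiliser region is exactly the idea your sketch is missing; your ``main obstacle'' paragraph correctly identifies where the difficulty lies but the proposed resolution does not go through.
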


\begin{proof}
First observe that we can assume that each connecting group is a subgroup of a group in $\mathcal{P}$. Indeed suppose that there are $r>0$ connecting groups which are larger than $\mathcal{P}$. For each of the corresponding connecting edges we see that path consisting of it together with the branches of both its endpoints must be fixed by the connecting group, which is larger than $\mathcal{P}$. Since the action is $k$--acylindrical on groups larger than $\mathcal{P}$ each of these paths have length at most $k$. Thus we can collapse these paths to get a new tree with at most $kr$ fewer edges and a set of seed vertices with $r$ fewer connecting groups. 

Let $F \subset T$ be the forest consisting of $S$ together with every edge and vertex whose stabiliser is larger than $\mathcal{P}$. Since all of the connecting groups are contained in a member of $\mathcal{P}$ we see that $F$ must deformation retract to $S$. Let $R \subseteq T$ be a maximal subtree where every edge stabiliser is contained in a member of $\mathcal{P}$. Let $A = R \cap F$, the vertices with stabiliser larger than $\mathcal{P}$ and seed vertices which are in $R$. We define $\tilde{R}$ as the union of $R$ and the branches of each $v \in A$. 

\begin{figure}[h!]
\centering
%% Creator: Inkscape inkscape 0.92.4, www.inkscape.org
%% PDF/EPS/PS + LaTeX output extension by Johan Engelen, 2010
%% Accompanies image file '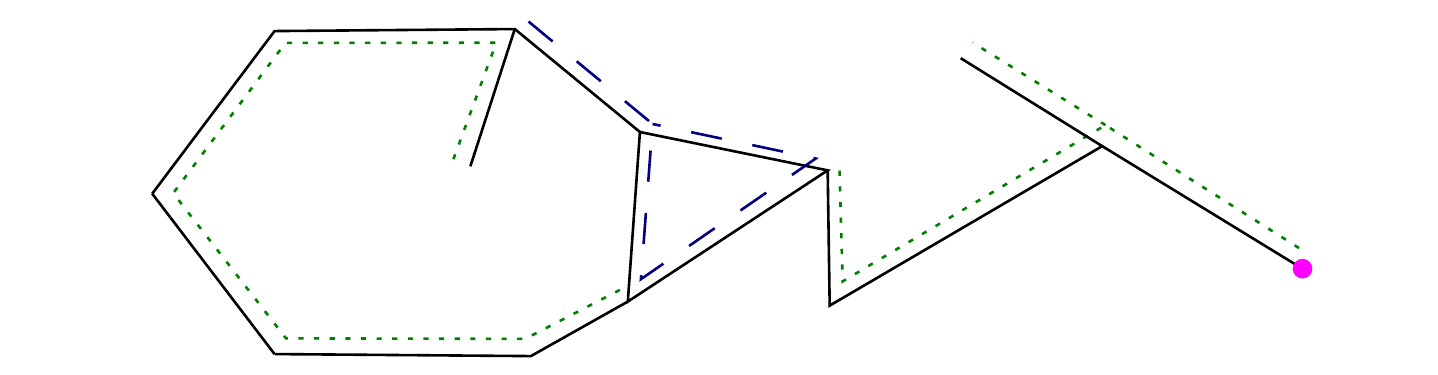' (pdf, eps, ps)
%%
%% To include the image in your LaTeX document, write
%%   \input{<filename>.pdf_tex}
%%  instead of
%%   \includegraphics{<filename>.pdf}
%% To scale the image, write
%%   \def\svgwidth{<desired width>}
%%   \input{<filename>.pdf_tex}
%%  instead of
%%   \includegraphics[width=<desired width>]{<filename>.pdf}
%%
%% Images with a different path to the parent latex file can
%% be accessed with the `import' package (which may need to be
%% installed) using
%%   \usepackage{import}
%% in the preamble, and then including the image with
%%   \import{<path to file>}{<filename>.pdf_tex}
%% Alternatively, one can specify
%%   \graphicspath{{<path to file>/}}
%% 
%% For more information, please see info/svg-inkscape on CTAN:
%%   http://tug.ctan.org/tex-archive/info/svg-inkscape
%%
\begingroup%
  \makeatletter%
  \providecommand\color[2][]{%
    \errmessage{(Inkscape) Color is used for the text in Inkscape, but the package 'color.sty' is not loaded}%
    \renewcommand\color[2][]{}%
  }%
  \providecommand\transparent[1]{%
    \errmessage{(Inkscape) Transparency is used (non-zero) for the text in Inkscape, but the package 'transparent.sty' is not loaded}%
    \renewcommand\transparent[1]{}%
  }%
  \providecommand\rotatebox[2]{#2}%
  \newcommand*\fsize{\dimexpr\f@size pt\relax}%
  \newcommand*\lineheight[1]{\fontsize{\fsize}{#1\fsize}\selectfont}%
  \ifx\svgwidth\undefined%
    \setlength{\unitlength}{413.85826772bp}%
    \ifx\svgscale\undefined%
      \relax%
    \else%
      \setlength{\unitlength}{\unitlength * \real{\svgscale}}%
    \fi%
  \else%
    \setlength{\unitlength}{\svgwidth}%
  \fi%
  \global\let\svgwidth\undefined%
  \global\let\svgscale\undefined%
  \makeatother%
  \begin{picture}(1,0.26027397)%
    \lineheight{1}%
    \setlength\tabcolsep{0pt}%
    \put(0.03662987,0.3576232){\color[rgb]{0,0,0}\makebox(0,0)[lt]{\begin{minipage}{0.93116981\unitlength}\centering \end{minipage}}}%
    \put(0,0){\includegraphics[width=\unitlength,page=1]{BoundReducedTrees.pdf}}%
    \put(0.31706873,0.18101292){\color[rgb]{0,0.50196078,0}\makebox(0,0)[t]{\lineheight{1.25}\smash{\begin{tabular}[t]{c}$F$\end{tabular}}}}%
    \put(0.47781858,0.17331873){\color[rgb]{0,0,0.50196078}\makebox(0,0)[t]{\lineheight{1.25}\smash{\begin{tabular}[t]{c}$R$\end{tabular}}}}%
    \put(0,0){\includegraphics[width=\unitlength,page=2]{BoundReducedTrees.pdf}}%
    \put(0.60657486,0.13437752){\color[rgb]{1,0,0}\makebox(0,0)[t]{\lineheight{1.25}\smash{\begin{tabular}[t]{c}$A$\end{tabular}}}}%
    \put(0.88118817,0.05819856){\color[rgb]{1,0,1}\makebox(0,0)[t]{\lineheight{1.25}\smash{\begin{tabular}[t]{c}$S$\end{tabular}}}}%
  \end{picture}%
\endgroup%

\caption{An example of what this construction may look like in the graph of groups. Here we see that $p := |A| = 3$, $\chi(R/G) = 0$ and that $R$ contains three orbits of connecting edges; the bottom right one and any two of the other three in $R/G$. Note the leftmost member of $A$ has valence $1$ in $R/G$ and isn't in $S$, so if the tree is reduced $F$ must extend beyond it. Because of this the length of the branch of this vertex is actually bounded above by $k-1$ instead of just the usual $k$.}
\end{figure}

Let $\left| A/G \right| = p$ and suppose $R$ contains $q$ connecting edges (up to equivalence) of some forest of influence $\Gamma$ grown from $S$. Now the branch of each vertex of $R$ must contain a vertex in $A$. Moreover this is unique as two distinct members of $A$ are influenced by different seed vertices and so must lie in different components of $\Gamma$. Therefore $R \cap \Gamma$ deformation retracts to $A$ and so $\chi(R/G) = p - q$. 

%Observe that each member of $A$ is in a different tree of influence of $\Gamma$. So we can find the Euler characteristic of $R/G$ by building it in the following way.

%\begin{itemize}
%\item Begin with the $p$ vertices of $A/G$.
%\item Add a non-connecting edge which starts at a vertex we have already included. The terminal vertex of this edge must be new as otherwise this we could find a path in $T$ between seed vertices which contains no connecting edge. Hence the Euler characteristic remains unchanged. Repeat this until all non-connecting edges are included.
%\item At this point we observe that all the vertices of $R/G$ must be included because the branch of any vertex in $R$ must contain a vertex in $A$.
%\item Add the connecting edges. Since all vertices are accounted for each connecting edge reduces the Euler characteristic by $1$.  
%\end{itemize}
%Thus $\chi(R/G) = p - q$. In particular we see that $q = p - \chi(R/G) > 0$. 

Recall from \thref{res:branch_edge} that every non-seed vertex has an edge with equal stabiliser to it and hence every vertex of $R/G$ with valence $1$ or $2$ (in $R/G$) must be in $A$ as $T$ is partially reduced on $\mathcal{P}$. Suppose that $R/G$ has $n_i$ vertices of valence $i$ and observe that $p \geq n_1 + n_2$. Observe that $\chi(R/G) = \frac{1}{2}(n_1 - n_3 - 2n_4 - \cdots)$ and hence ${\sum\limits_{i \geq 3} n_i \leq n_1 - 2\chi(R/G)}$. So 
\begin{eqnarray*}
\#(\text{edges of } R/G)  &   =  &  \#(\text{vertices of } R/G) - \chi(R/G) \\
                          & \leq &  2n_1 + n_2 - 3\chi(R/G) 
\end{eqnarray*}
Now the length of the branch of each $v \in A$ is at most $k$ as the action is $k$--acylindrical on $\mathcal{P}$. (If $v \in A \setminus S$ then $\stab v$ is lager than $\mathcal{P}$ and fixes its branch.) Hence we see that
\begin{eqnarray*}
\#(\text{edges of } \tilde{R}/G)  & \leq &  \#(\text{edges of } R/G) + kp \\
                                  & \leq &  2n_1 + n_2 + kp - 3\chi(R/G) \\
                                  & \leq &  (k+2)p - 3\chi(R/G)
\end{eqnarray*}
We now split into cases depending on the value of $\chi(R/G)$. First suppose that $\chi(R/G) \leq 0$. Then $q \geq p$ and so 
\begin{eqnarray*}
\#(\text{edges of } \tilde{R}/G)  & \leq &  (k+2)p - 3\chi(R/G) \\
                                  &  =   &  (k-1)p + 3(p - \chi(R/G)) \\
                                  & \leq &  (k-1)q + 3q \\
                                  &  =   &  (k+2)q
\end{eqnarray*}
Otherwise $\chi(R/G) = 1$ and we have $q = p - 1$ and so
\begin{eqnarray*}
\#(\text{edges of } \tilde{R}/G)  & \leq &  (k+2)p - 3\chi(R/G) \\
                                  & \leq &  (k+2)(q+1) - 3 \\
                                  &  =   &  (k+2)q + (k-1) \\
                                  & \leq &  (k+2)q + (k-1)q \\
                                  &  =   &  (2k+1)q
\end{eqnarray*}

Let $\tilde{T}$ be the tree obtained by collapsing each edge of $G \tilde{R}$ and let $\pi: T \rightarrow \tilde{T}$. Observe that $\tilde{S} := \pi(S)$ is a set of seed vertices for $\tilde{T}$ and that the number of connecting edges of $\tilde{S}$ is $n - q$. Hence by induction $\tilde{T}/G$ has at most $(2k+1)(n-q)$ edges. Combining this with the above we see that $T/G$ has at most $(2k+1)n$ edges as required. 

It remains to show the improved bound if $T$ is reduced and $k>1$. In this case any $v \in A \setminus S$ where $v/G$ has valence $1$ in $R/G$ must be the endpoint of at least $2$ edges not contained in $R/G$. This means that the path from $v$ to the corresponding $u \in S$ actually has length at most $k-1$. Hence in this case
\begin{eqnarray*}
\#(\text{edges of } \tilde{R}/G)  & \leq &  \#(\text{edges of } R/G) + k(p - n_1) + (k-1)n_1 \\
                                  & \leq &  (k+1)n_1 + n_2 + k(p - n_1) - 3\chi(R/G) \\
                                  & \leq &  (k+1)p - 3\chi(R/G) 
\end{eqnarray*}

The rest of the calculations are essentially the same as before and so are omitted for the sake of brevity. We will note however that we only actually obtain the improved bound if $k>1$. (In the case where $\chi(\tilde{R}/G) = 1$ we need $(k-2) \leq (k-2)q$. Since $q \geq 1$ we need $k \geq 2$.) $\square$
\end{proof}

We now have all the pieces we need to prove \thref{res:key_lemma_simple} and hence \thref{res:main_simple}.

\begin{proof}[Proof of \thref{res:key_lemma_simple}]
We will proceed by induction on $W_{\mathcal{P},S'}$. If $W_{\mathcal{P},S'} = 0$ then since $G$ isn't isomorphic to $\mathbb{Z}$ there is a single seed vertex in $T'$, which must be fixed by $G$. So the image of this vertex in $T$ is fixed by $G$ and so as $T$ is minimal it must just consist of a single vertex. 

So WLOG $W_{\mathcal{P},S'} > 0$. Start by using Stallings folding theorem (\thref{res:stallings_folding_thm}) to decompose $\alpha$ into folds $\alpha_i: T_{i-1} \rightarrow T_{i}$ and let $S_0 := S'$. Recursively for each $i>0$ if $S_{i-1}$ is defined and has connecting edges contained in $\mathcal{P}$ we obtain a set of seed vertices $S_i$ for $T_i$ at each step using either \thref{res:forests_enjoy_folds} or \thref{res:free_action_folds} (depending on if the action on $T_{i-1}$ is free). If the $\mathcal{P}$--weight at any step decreases then we are done by induction on $W_{\mathcal{P},S'}$. 

If instead $S_{i-1}$ has a connecting edge which is larger than $\mathcal{P}$ we apply \thref{res:collapse_connecting_edge}. We obtain $\overline{\Psi}: \overline{T}_{i-1} \rightarrow \overline{T}$ where the action on $\overline{T}$ is $k$-acylindrical on groups larger than $\mathcal{P}$. There's a set of seed vertices $\overline{S}_{i-1}$ for $\overline{T}_{i-1}$ with $W_{\mathcal{P},\overline{S}_{i-1}} \leq W_{\mathcal{P},S'} - 1$ and $T$ has at most $k$ more edges than $\overline{T}$. Hence by induction on $W_{\mathcal{P},S'}$ we see that $\overline{T}/G$ has at most $(W_{\mathcal{P},S'} - 1)k$ edges, hence $T/G$ has at most $W_{\mathcal{P},S'}k$ edges as desired. 
 
%Observe that if all the edges of $T$ are larger than $\mathcal{P}$ then we always arrive back in one of the above cases and so the number of edges of $T/G$ is bounded above by $kW_{\mathcal{P},S'}$. 

So WLOG we can assume that $S_i$ is always defined and that both the $\mathcal{P}$--weight is constant and that we never have a connecting edge of $\mathcal{P}$--weight $1$. Recall that \thref{res:forests_enjoy_folds} says that $\alpha_{i}(S_{i-1}) \subseteq S_i$ and since the number of connecting edges is bounded above by $\frac{1}{2}W_{\mathcal{P},S'}$ we must have $S_i = \alpha_{i}(S_{i-1})$ for all sufficiently large $i$. Thus by taking limits we see that there is a set of seed vertices $S$ for $T$ with $W_{\mathcal{P},S} = W_{\mathcal{P},S'}$. Now \thref{res:reduced_key_lemma} implies that the number of edges of $T/G$ is bounded above by $\left( \frac{2k+1}{2} \right) W_{\mathcal{P},S'}$. (Since each connecting edge has weight of at least $2$.) $\square$

%If $k>1$ and $T$ is reduced then we can improve the bound on the number of edges of $T/G$ to $2kW_{\mathcal{P},S'}$.
\end{proof}

\begin{proof}[Proof of \thref{res:main_simple}]
First we use Dunwoody's resolution lemma (\thref{res:dunwoody_resolution}) to get $G$ acting on a tree $T'$ which has at most $\alpha(G)$ orbits of edges together with a combinatorial map $\Psi: T' \rightarrow T$. Let $S$ be the set of vertices of $T'$ before subdividing. Observe that $S$ is a set of seed vertices for $T'$ and that it has $\mathcal{P}$--weight of at most $2^M C(G)$ since $\mathcal{P}$ has height $M$. Hence by \thref{res:key_lemma_simple} we see that $T$ has as most $(2k+1) 2^{M-1} C(G)$ edges. $\square$
\end{proof}

\section{Extending to the main results}\label{sec:extension}

Now that we have finished proving our simplified result it's time to extend it to get our main theorems. The first way we're going to do this is to show that we don't require $\mathcal{P}$ to be closed under taking subgroups; although it still must satisfy condition $(\dagger)$. (See Section~\ref{sec:statements} for the statement of $(\dagger)$.) The following is the analogue to \thref{res:key_lemma_simple} in this context.

\begin{lemma}\thlabel{res:key_lemma_fp}
Let $G$ be a non-cyclic group and let $\mathcal{P}$ be a conjugation invariant set of subgroups of $G$ which satisfies $(\dagger)$. Let $G$ act on a tree $T$ and suppose this action is $\mathcal{P}$--partially-reduced and $k$--acylindrical on a subgroups larger than $\mathcal{P}$. Let $G$ act on another tree $T'$ and there is a $G$--equivarient combinatorial map $\Psi: T' \rightarrow T$. Suppose that $T'$ has a set of seed vertices $S$ with finite $\mathcal{P}$--weight $W_{\mathcal{P},S}$ and $T$ is $\mathcal{P}$--closed. Then $T/G$ has at most $\left( \frac{2k+1}{2} \right) W_{\mathcal{P},S}$ edges.
\end{lemma}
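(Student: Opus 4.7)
The proof follows the same high-level structure as that of \thref{res:key_lemma_simple}, and I would induct on $W_{\mathcal{P},S}$. The base case $W_{\mathcal{P},S}=0$ is handled identically: the unique seed vertex must be $G$-invariant, so its image in $T$ is fixed by $G$, and minimality forces $T$ to be a point. For the inductive step, decompose $\Psi$ into folds $\alpha_i \colon T_{i-1} \to T_i$ via \thref{res:stallings_folding_thm}, with $T_0 = T'$ and $S_0 = S$, and propagate seed vertex sets $S_i$ along the sequence. Whenever $W_{\mathcal{P}, S_i} < W_{\mathcal{P}, S_{i-1}}$, the induction hypothesis applied to the composition $T_i \to T$ finishes the argument. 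Whenever some $S_i$ acquires a connecting group larger than $\mathcal{P}$, apply an analogue of \thref{res:collapse_connecting_edge} to collapse a path of length at most $k$ in $T$ and drop the weight by at least one, again inducting.

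The main obstacle is to establish the analogue of \thref{res:forests_enjoy_folds} in the absence of closure under subgroups. Several of the case-by-case weight inequalities in the original proof invoke \thref{res:weight_basics}\ref{pt:extention_weight} on subgroups which, in the setting of \thref{res:key_lemma_simple}, were automatically in $\mathcal{P}$ by subgroup-closure, but here need not be. Condition $(\dagger)$ is designed precisely to handle this: whenever a subgroup $K$ of an edge stabiliser of $T$ fails to be larger than $\mathcal{P}$, it admits a unique minimal extension $\widehat{K} \in \mathcal{P}$, and $\widehat{K}$ fixes some vertex of $T$. The hypothesis that $T$ is $\mathcal{P}$-closed guarantees that any such $\widehat{K}$ is itself contained in the original edge stabiliser, so the weight accounting propagates cleanly through the fold. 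Concretely, each time the simple proof used ``$\stab e_i \in \mathcal{P}$'', I would substitute the minimal extension $\widehat{\stab e_i}$; its uniqueness keeps chain comparisons unambiguous, while the vertex-fixing clause of $(\dagger)$ supplies the candidate new seed vertex whenever a fold produces a point whose stabiliser is not already contained in that of an existing seed vertex.

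Granted this adapted lemma, the remainder of the argument is that of \thref{res:key_lemma_simple}. If neither a weight drop nor a connecting group larger than $\mathcal{P}$ ever appears in the sequence of folds, then from some index onward $S_i = \alpha_i(S_{i-1})$ and $W_{\mathcal{P}, S_i}$ is constant; passing to the direct limit produces a seed vertex set $S^*$ for $T$ with $W_{\mathcal{P}, S^*} = W_{\mathcal{P}, S}$. Applying \thref{res:reduced_key_lemma} to the partially-reduced action on $T$ with seed set $S^*$ then bounds the number of edges of $T/G$ by $\left(\frac{2k+1}{2}\right) W_{\mathcal{P}, S}$, completing the induction.
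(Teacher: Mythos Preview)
Your overall architecture is right and matches the paper: induct on $W_{\mathcal{P},S}$, decompose $\Psi$ into folds, propagate seed sets, invoke \thref{res:collapse_connecting_edge} when a connecting group becomes larger than $\mathcal{P}$, and finish with \thref{res:reduced_key_lemma} in the limit. The gap is in how you propose to handle the lack of subgroup-closure.

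You suggest adapting the case analysis of \thref{res:forests_enjoy_folds} by replacing each connecting group $\stab e_i$ with its minimal extension $\widehat{\stab e_i}\in\mathcal{P}$. The problem is that the weight inequalities there (for instance $W_{\mathcal{P},f_i}\le\tfrac12 W_{\mathcal{P},e_i}$ from $\stab e_i<\stab f_i$) need $\stab e_i$ itself to lie in $\mathcal{P}$ so that one can prepend it to a chain. Replacing by $\widehat{\stab e_i}$ only helps if $\widehat{\stab e_i}\le\stab_R f_i$, and that is exactly what you cannot guarantee: the hypothesis that $T$ is $\mathcal{P}$--closed controls stabilisers in $T$, not in the intermediate tree $R$. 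All you know is that $\widehat{\stab e_i}$ sits inside the $T$--stabiliser of the image of $f_i$, which may strictly contain $\stab_R f_i$. So the halving step breaks down, and with it the non-increase of weight under a fold.

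The paper does not try to repair \thref{res:forests_enjoy_folds} internally. Instead it introduces a preprocessing step (\thref{res:extra_mod_fp}): before applying each fold $\alpha_j$, one performs additional type~II folds that pull the minimal extension $H$ of each offending connecting group along the relevant branches. These extra folds factor through $\Psi$ precisely because $T$ is $\mathcal{P}$--closed (so $H$ already stabilises the images in $T$), and the vertex-fixing clause of $(\dagger)$ supplies the anchor vertex from which to pull. After this, every connecting group genuinely lies in $\mathcal{P}$ and \thref{res:forests_enjoy_folds} applies verbatim. The diagram of maps $\rho^{(j)}_i,\alpha^{(j)}_i$ in the paper's proof organises this interleaving of ``normalise connecting groups, then fold'' along the whole sequence. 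Your sketch is missing this mechanism; once you insert it, the rest of your argument goes through as written.
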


The added difficulty is that \thref{res:forests_enjoy_folds} requires every connecting group to be in $\mathcal{P}$. Previously this was not an issue as every subgroup of $G$ was either in $\mathcal{P}$ or larger than it. We will solve this problem by adding extra folds at each step which forces the connecting groups to be in $\mathcal{P}$.

\begin{lemma}\thlabel{res:extra_mod_fp}
Suppose $\Psi: T' \rightarrow T$ and $\beta: T' \rightarrow R$ are $G$-equivarient combinatorial maps where $\beta$ factors through $\Psi$. Let $S$ be a set of seed vertices for $R$ with finite $\mathcal{P}$--weight $W_{\mathcal{P},S}$ and where none of the connecting groups are larger than $\mathcal{P}$. Suppose that $\mathcal{P}$ satisfies $(\dagger)$ and $T$ is $\mathcal{P}$--closed. Then there is a combinatorial map $\rho: R \rightarrow R'$ which factors through $\Psi$ such that $R'$ has a set of seed vertices $S' := \rho(S)$ such that $W_{\mathcal{P},S'} \leq W_{\mathcal{P},S}$ and all its connecting groups are in $\mathcal{P}$.
\end{lemma}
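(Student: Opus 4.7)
The plan is to process connecting edge orbits of $R$ whose stabiliser is not in $\mathcal{P}$ one at a time, performing folds compatible with the factorisation $\gamma : R \to T$ (so $\Psi = \gamma \circ \beta$) that raise each such stabiliser to its minimal extension in $\mathcal{P}$. After preprocessing if necessary I would assume that $\gamma$ does not collapse any edge of $R$. Fix an orbit $Ge$ of connecting edges with $K := \stab e \notin \mathcal{P}$; since $K$ is not larger than $\mathcal{P}$, condition $(\dagger)$ yields a unique minimal extension $H \in \mathcal{P}$ with $K \leqslant H$. Because $\gamma$ is non-collapsing, the path $\gamma(e)$ contains some edge $e^*$ of $T$ with $K \leqslant \stab e^*$; the $\mathcal{P}$-closedness of $\stab e^*$ (since $T$ is $\mathcal{P}$-closed) then forces $H \leqslant \stab e^*$. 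Running this over every edge of $\gamma(e)$ shows that $H$ fixes $\gamma(e)$ pointwise, and in particular $H \leqslant \stab \gamma(u)$ for each endpoint $u$ of $e$.

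Next I would fix an endpoint $u$ of $e$ and an element $h \in H$. Since $\gamma(hu) = h\gamma(u) = \gamma(u)$, the reduced path from $u$ to $hu$ in $R$ maps to a closed walk at $\gamma(u)$ in $T$, which must backtrack because $T$ is a tree and $\gamma$ is non-collapsing. Applying statement $(\star)$ from the proof of \thref{res:stallings_folding_thm} to $\gamma$ restricted to this path produces a finite sequence of folds (each automatically factoring $\gamma$) after which $u$ and $hu$ become identified; a subsequent type II fold then identifies $e$ with $he$, placing $h$ into the stabiliser of the image of $e$. Iterating over a finite generating set of $H$ modulo $K$ (finite by the finite height of $\mathcal{P}$) yields a composition of folds $\rho_e : R \to R_e$ factoring $\gamma$ and in which the image of $e$ has stabiliser exactly $H$. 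Processing each bad orbit in turn gives the required $\rho : R \to R'$.

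For the weight analysis the key observation is that $W_{\mathcal{P},K} = W_{\mathcal{P},H}$: since $H$ is the unique minimal extension of $K$ in $\mathcal{P}$, the longest chain of $\mathcal{P}$-groups above $K$ is obtained by starting at $H$, which is also the longest chain above $H$ itself. Hence replacing $K$ by $H$ on this orbit preserves its contribution to the total weight, and the intermediate folds are controlled by arguments parallel to those in \thref{res:forests_enjoy_folds}, adapted so that instead of assuming connecting groups lie in $\mathcal{P}$ one works with the invariant that each such group has its minimal extension inside $\mathcal{P}$. The main obstacle I anticipate is precisely this bookkeeping: the folds used to fix one bad orbit can reshape the forest of influence so as to create new connecting edges whose stabilisers sit outside $\mathcal{P}$. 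The plan would be to show that each round of processing either strictly reduces the number of bad orbits or strictly decreases the total $\mathcal{P}$-weight, so that the finite height of $\mathcal{P}$ forces termination with all connecting groups in $\mathcal{P}$ and $W_{\mathcal{P},S'} \leqslant W_{\mathcal{P},S}$.
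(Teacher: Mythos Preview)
Your approach has a genuine gap: you are not using the second clause of condition~$(\dagger)$, and that clause is precisely what makes this lemma go through. That clause applies to \emph{any} tree on which $G$ acts; applied to $R$ (not to $T$), it says the minimal extension $H$ of $\stab e$ already fixes some vertex $v$ of $R$. The paper's proof then performs only type~II folds, pulling $H$ from $v$ along the path $p$ to $e$ together with the branches of the vertices on $p$ (call this region $\tilde p$). Every edge $f$ of $\tilde p$ has $\stab e \leqslant \stab f$, so by $\mathcal{P}$-closedness of $T$ the image of $f$ in $T$ is stabilised by $H$; hence each of these type~II folds factors through $\Psi$. Because only type~II folds are used, the underlying graph of groups is unchanged, $\rho(\Gamma)$ is still a forest of influence grown from $\rho(S)$, and the weight bookkeeping is immediate: no new connecting edges appear, and any connecting edge whose stabiliser strictly grows has its $\mathcal{P}$-weight go down.

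By contrast, you only establish that $H$ fixes points of $T$, and then attempt to fold arbitrary paths from $u$ to $hu$ in $R$ using $(\star)$. Those folds are of all types, and as you yourself acknowledge, they can reshape the forest of influence and manufacture new bad connecting edges; your proposed termination scheme (``each round reduces bad orbits or weight'') is not justified. Separately, your claim that $H$ is finitely generated over $K$ ``by the finite height of $\mathcal{P}$'' is simply false: finite height bounds the length of chains in $\mathcal{P}$, not indices or generating sets. There is no such finiteness available in general, which is exactly why the paper pulls $H$ in one go via type~II folds from a vertex of $R$ that $H$ already fixes.
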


\begin{proof}
Let $\Gamma$ be any forest of influence which is grown from $S$. If each connecting group of $S$ is in $\mathcal{P}$ then we are done; so WLOG there is some connecting edge $e$ of $\Gamma$ which is not in $\mathcal{P}$. Since $\mathcal{P}$ satisfies $(\dagger)$ we have $H \in \mathcal{P}$ which is a minimal extension of $\stab e$ to $\mathcal{P}$ and acts elliptically on $R$. Suppose $H$ fixes the vertex $v$ in $R$. Let $p$ be the reduced edge path which starts at $v$ and has final edge $e$. Let $\tilde{p}$ be the union of $p$ together with the branch of each vertex on $p$. (See Figure~\ref{fig:FPModificationDomain}.) Since $T$ is $\mathcal{P}$--closed and the stabiliser of each edge $f$ in $\tilde{p}$ contains $\stab e$ we see that image of $f$ in $T$ must be stabilised by $H$. Let $\rho$ be the (possibly infinite) composition of type II folds which ``pulls'' $H$ onto each edge of $\tilde{p}$ and observe that this factors through $\Psi$ since $T$ is $\mathcal{P}$--closed. Hence if $e'$ is a connecting edge of $\Gamma$ with stabiliser in $\mathcal{P}$ then either $\stab e' = \stab \rho(e')$ or $W_{\mathcal{P},e'} < W_{\mathcal{P},e}$. Moreover $\rho(\Gamma)$ is a forest of influence grown from the seed vertices $\rho(S)$ with $W_{\mathcal{P},S'} \leq W_{\mathcal{P},S}$. Hence we can apply this process finitely many times until we get the result. $\square$

\begin{figure}[h!]
\centering
%% Creator: Inkscape inkscape 0.92.4, www.inkscape.org
%% PDF/EPS/PS + LaTeX output extension by Johan Engelen, 2010
%% Accompanies image file '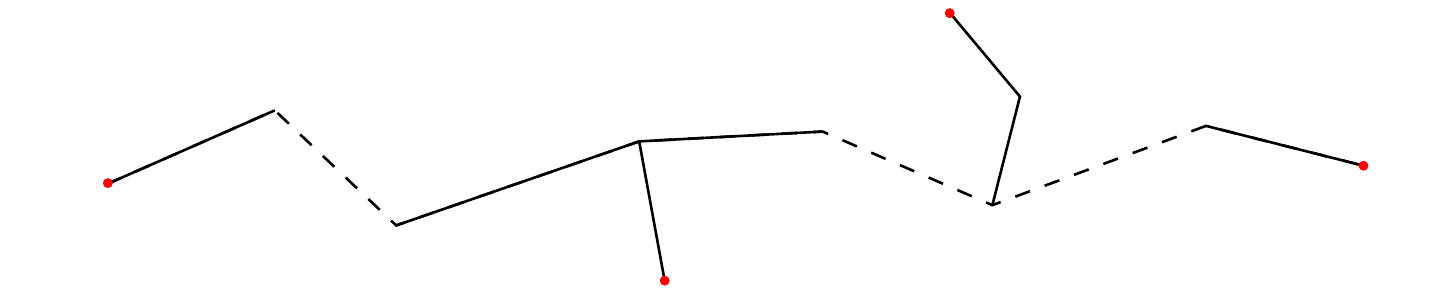' (pdf, eps, ps)
%%
%% To include the image in your LaTeX document, write
%%   \input{<filename>.pdf_tex}
%%  instead of
%%   \includegraphics{<filename>.pdf}
%% To scale the image, write
%%   \def\svgwidth{<desired width>}
%%   \input{<filename>.pdf_tex}
%%  instead of
%%   \includegraphics[width=<desired width>]{<filename>.pdf}
%%
%% Images with a different path to the parent latex file can
%% be accessed with the `import' package (which may need to be
%% installed) using
%%   \usepackage{import}
%% in the preamble, and then including the image with
%%   \import{<path to file>}{<filename>.pdf_tex}
%% Alternatively, one can specify
%%   \graphicspath{{<path to file>/}}
%% 
%% For more information, please see info/svg-inkscape on CTAN:
%%   http://tug.ctan.org/tex-archive/info/svg-inkscape
%%
\begingroup%
  \makeatletter%
  \providecommand\color[2][]{%
    \errmessage{(Inkscape) Color is used for the text in Inkscape, but the package 'color.sty' is not loaded}%
    \renewcommand\color[2][]{}%
  }%
  \providecommand\transparent[1]{%
    \errmessage{(Inkscape) Transparency is used (non-zero) for the text in Inkscape, but the package 'transparent.sty' is not loaded}%
    \renewcommand\transparent[1]{}%
  }%
  \providecommand\rotatebox[2]{#2}%
  \newcommand*\fsize{\dimexpr\f@size pt\relax}%
  \newcommand*\lineheight[1]{\fontsize{\fsize}{#1\fsize}\selectfont}%
  \ifx\svgwidth\undefined%
    \setlength{\unitlength}{413.85826772bp}%
    \ifx\svgscale\undefined%
      \relax%
    \else%
      \setlength{\unitlength}{\unitlength * \real{\svgscale}}%
    \fi%
  \else%
    \setlength{\unitlength}{\svgwidth}%
  \fi%
  \global\let\svgwidth\undefined%
  \global\let\svgscale\undefined%
  \makeatother%
  \begin{picture}(1,0.21232877)%
    \lineheight{1}%
    \setlength\tabcolsep{0pt}%
    \put(0.03662987,0.3576232){\color[rgb]{0,0,0}\makebox(0,0)[lt]{\begin{minipage}{0.93116981\unitlength}\centering \end{minipage}}}%
    \put(-0.40547742,2.13428267){\color[rgb]{0,0,0}\makebox(0,0)[lt]{\begin{minipage}{2.54478703\unitlength}\centering \end{minipage}}}%
    \put(0,0){\includegraphics[width=\unitlength,page=1]{FPModificationDomain.pdf}}%
    \put(0.21234135,0.07915424){\color[rgb]{0,0,0.50196078}\makebox(0,0)[t]{\lineheight{1.25}\smash{\begin{tabular}[t]{c}$e$\end{tabular}}}}%
    \put(0.4763333,0.0064275){\color[rgb]{1,0,0}\makebox(0,0)[t]{\lineheight{1.25}\smash{\begin{tabular}[t]{c}$S$\end{tabular}}}}%
    \put(0.94884062,0.07532804){\color[rgb]{0,0,0}\makebox(0,0)[t]{\lineheight{1.25}\smash{\begin{tabular}[t]{c}$v$\end{tabular}}}}%
  \end{picture}%
\endgroup%

\caption{An example of the domain we need to apply extra folds to. The reduced path from $v$ to $e$ is $p$ while the entire diagram is $\tilde{p}$. We need to apply folds to get $H$ fixing this whole region and not just $p$ as otherwise we would need additional seed vertices.}
\label{fig:FPModificationDomain}
\end{figure}
\end{proof}

\begin{proof}[Proof of \thref{res:key_lemma_fp}]
As before we will proceed by induction on $W_{\mathcal{P},S}$. If $W_{\mathcal{P},S} = 0$ then there is some vertex of $T'$ which is fixed by $G$. So the image of this vertex in $T$ is fixed by $G$ and so as $T$ is minimal it must just consist of just this single vertex. 

Use Stallings folding theorem (\thref{res:stallings_folding_thm}) to decompose $\alpha$ into folds ${\alpha^{(0)}_i: T^{(0)}_{i-1} \rightarrow T^{(0)}_{i}}$. We will iteratively define trees $T^{(j)}_{i}$ (for $i \geq j - 1$) and sets of seed vertices $S^{(j)}_{i}$ for $T^{(j)}_{i}$ (for $i = j$ and $i = j + 1$) together with maps $\alpha^{(j)}_i: T^{(j)}_{i-1} \rightarrow T^{(j)}_{i}$ and $\rho^{(j)}_{i}: T^{(j)}_{i} \rightarrow T^{(j+1)}_{i}$ as follows for each $j > 0$. 
\begin{equation*}
\begin{tikzcd}
T^{(0)}_{0} \arrow[r, "\alpha^{(0)}_1"] \arrow[d, "\rho^{(0)}_{0}", red] & T^{(0)}_{1} \arrow[r, "\alpha^{(0)}_2"] \arrow[d, "\rho^{(0)}_{1}"] & T^{(0)}_{2} \arrow[r, "\alpha^{(0)}_3"] \arrow[d, "\rho^{(0)}_{2}"] & \cdots \\
T^{(1)}_{0} \arrow[r, "\alpha^{(1)}_1", red] & T^{(1)}_{1} \arrow[r, "\alpha^{(1)}_2"] \arrow[d, "\rho^{(1)}_{1}", red] & T^{(1)}_{2} \arrow[r, "\alpha^{(1)}_3"] \arrow[d, "\rho^{(1)}_{2}"] & \cdots \\
& T^{(2)}_{1} \arrow[r, "\alpha^{(2)}_2", red] & T^{(2)}_{2} \arrow[r, "\alpha^{(2)}_3"] \arrow[d, "\rho^{(2)}_{2}", red] & \cdots \\
& & \vdots & \ddots
\end{tikzcd}
\end{equation*}
First use  define $\beta_j := \alpha^{(j-1)}_{j-1} \circ \rho^{(j-2)}_{j-2} \circ \cdots \circ \rho^{(1)}_{1} \circ \alpha^{(1)}_{1} \circ \rho^{(0)}_{0}$. (Part of the red path along the bottom of the diagram which ends with a right facing arrow.) Applying \thref{res:extra_mod_fp} to $\beta_j$ we obtain a map ${\rho^{(j)}_{j}: T^{(j-1)}_{j-1} \rightarrow T^{(j-1)}_{j-1}}$ where $S^{(j)}_{j-1} := \rho^{(j)}(S^{(j-1)}_{j-1})$ is a set of seed vertices where the connecting groups are in $\mathcal{P}$ and $W_{\mathcal{P},S^{(j)}_{j-1}} \leq W_{\mathcal{P},S^{(j-1)}_{j-1}}$. If $\alpha^{(j)}_i$ folds together the edges $e_1$ and $e_2$ we define $\alpha^{(j+1)}_i$ (for $i > j$) to be the fold (or identity) obtained by identifying the $\rho^{(j)}_{i+1}$ images of the $e_i$. Finally define $\rho^{(j+1)}_i$ (for $i > j + 1$) to make the above diagram commute. Finally \thref{res:forests_enjoy_folds} says that the fold $\alpha^{(j)}_j$ induces a set of seed vertices $S^{(j)}_{j}$ on $T^{(j)}_{j}$ with $W_{\mathcal{P},S^{(j)}_{j}} \leq W_{\mathcal{P},S^{(j)}_{j-1}}$. (If a separating edge is larger than $\mathcal{P}$ then we can reduce the $\mathcal{P}$--weight by collapsing at most $k$ edges using \thref{res:collapse_connecting_edge} and then proceeding by induction on $W_{\mathcal{P},S}$.) 

As $\alpha = \cdots \alpha^{(0)}_2 \circ \alpha^{(0)}_1$ we see that $\alpha = \cdots \alpha^{(2)}_2 \circ \rho{(1)} \circ \alpha^{(1)}_1 \circ \rho^{(0)}$ by the definitions of $\alpha^{(j)}_i$ and $\rho^{(j)}$. Moreover at each step we have a set of seed vertices with non-increasing $\mathcal{P}$--weight, the number of orbits of connecting edges are non-decreasing and hence that all but finitely many of the sets of seed vertices are the image of the seed vertices at the previous level. At this point the proof is exactly the same as the proof of \thref{res:key_lemma_simple}. $\square$
\end{proof}

\begin{proof}[Proof of \thref{res:main_result} \ref{pt:main_fp}]
First we use Dunwoody's resolution lemma (\thref{res:dunwoody_resolution}) to get $G$ acting on a tree $T'$ which has at most $\alpha(G)$ orbits of edges together with a combinatorial map $\Psi: T' \rightarrow T$. Let $S$ be the set of vertices of $T'$ before subdividing. Observe that $S$ is a set of seed vertices for $T'$ and that it has $\mathcal{P}$--weight of at most $2^M C(G)$ since $\mathcal{P}$ has height $M$. Hence by \thref{res:key_lemma_simple} we see that $T$ has as most $(2k-1) 2^{M-1} C(G)$ edges. $\square$
\end{proof}

It remains to extend \thref{res:main_simple} to finitely generated groups. An immediate hurdle for this is the lack of Dunwoody's resolution lemma (\thref{res:dunwoody_resolution}), as this only holds for (almost) finitely presented groups. Instead take a finite generating set $X$ for $G$ and consider the free group $F(X)$ acting freely on a tree $T'$. Whenever $G$ acts on a tree $T$ we see that there is an $F(X)$--equivarient combinatorial map $T' \rightarrow T$. It's this map which we intend to decompose into folds and apply our prior methods to. 

Before stating the analogue to \thref{res:key_lemma_simple} we first need to extend the definition of $\mathcal{P}$--weights.

\begin{definition}
Let $\phi: H \rightarrow G$ be a surjective homomorphism of groups. Let $\mathcal{P}$ be a set of subgroups for $G$ which is closed under conjugation. We define the \emph{$\mathcal{P}$--weight} of a $K \leq H$, (denoted $W_{\mathcal{P},\phi,K}$ or $W_{\mathcal{P},K}$ if $\phi$ is understood,) to be equal to $W_{\mathcal{P},\phi(K)}$. If $H$ acts on a tree with a set of seed vertices $S$ then we define its \emph{$\mathcal{P}$--weight} $W_{\mathcal{P},\phi,K}$ (or $W_{\mathcal{P},K}$ if $\phi$ is understood) to be equal to the sum of the $\mathcal{P}$--weights of the connecting groups.
\end{definition}

\begin{lemma}\thlabel{res:key_lemma_P_closed}
Let $\mathcal{P}$ be a conjugation invariant set of subgroups of $G$ which is closed under taking subgroups. Let $G$ act on a tree $T$ and suppose this action is $\mathcal{P}$--partially-reduced and $k$--acylindrical on a subgroups larger than $\mathcal{P}$. Suppose also that $G'$ is a countable group acting on a tree $T'$ and that the following conditions hold.
\begin{itemize}
\item There is a surjective homomorphism $\phi: G' \rightarrow G$.
\item The kernel of $\phi$ has trivial intersection with every edge stabiliser of $T'$.
\item There is a $G'$-equivarient combinatorial map $\Psi: T' \rightarrow T$. (Where the action of $G'$ on $T$ is the natural one given by $\phi$.)
\item $T'$ has a set of seed vertices $S'$ with $\mathcal{P}$--weight $W_{\mathcal{P},S'}$.
\end{itemize}
Then $T/G$ has at most $\left(\frac{2k+1}{2}\right) W_{\mathcal{P},S'}$ edges. Furthermore if either $T$ is reduced and $k>1$ or all of the edges of $T$ have stabiliser of size greater than $\mathcal{P}$ then $T/G$ has at most $kW_{\mathcal{P},S'}$ edges. 
\end{lemma}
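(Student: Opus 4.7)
The strategy is to reproduce the proof of \thref{res:key_lemma_simple}, using the auxiliary action of $G'$ on $T'$ in place of the tree supplied by Dunwoody's resolution lemma. Since $\mathcal{P}$ is closed under taking subgroups, every subgroup of $G$ is either in $\mathcal{P}$ or larger than $\mathcal{P}$, so the dichotomy on which \thref{res:forests_enjoy_folds} rests is still exhaustive once the weight of an edge of $T'$ is interpreted via $W_{\mathcal{P},\phi,K}:=W_{\mathcal{P},\phi(K)}$. The assumption that $\ker\phi$ intersects every edge stabiliser of $T'$ trivially is exactly what makes this weight behave well under folding: after any fold the new edge stabiliser still injects into $G$ under $\phi$, so its weight is computed from its image in the expected way.

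The plan is to induct on $W_{\mathcal{P},S'}$. The base case $W_{\mathcal{P},S'}=0$ corresponds either to a seed vertex fixed by $G$, so by minimality $T$ has no edges, or, in the free-action convention, to $\beta_1(T'/G')=1$, both of which are direct. Otherwise apply Stallings' folding theorem (\thref{res:stallings_folding_thm}) to factor $\Psi=\cdots\alpha_2\alpha_1$ as a sequence of folds $\alpha_i:T_{i-1}\to T_i$, with $T_0=T'$ and $S_0=S'$. At each step \thref{res:forests_enjoy_folds}, or \thref{res:free_action_folds} when the action on $T_{i-1}$ is free, produces a set of seed vertices $S_i$ for $T_i$ with $W_{\mathcal{P},S_i}\leq W_{\mathcal{P},S_{i-1}}$. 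If the weight strictly drops at some stage, induction finishes the argument. If at some stage a connecting group becomes larger than $\mathcal{P}$, \thref{res:collapse_connecting_edge} reduces the weight by at least $1$ while removing at most $k$ edges from $T$, and induction again completes.

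In the remaining case the weight stays constant throughout and no connecting group is ever $\mathcal{P}$-larger. Because $\alpha_i(S_{i-1})\subseteq S_i$ and the number of orbits of connecting edges is bounded by $\frac{1}{2}W_{\mathcal{P},S'}$, eventually $S_i=\alpha_i(S_{i-1})$; passing to the direct limit produces a set of seed vertices $S$ for $T$ of the same $\mathcal{P}$-weight, all of whose connecting groups lie in $\mathcal{P}$. Feeding this into \thref{res:reduced_key_lemma} gives at most $(2k+1)n$ edges of $T/G$, where $n$ is the number of orbits of connecting edges. Since every connecting group in $\mathcal{P}$ has weight at least $2$, we have $n\leq\frac{1}{2}W_{\mathcal{P},S}$ and the main bound $\frac{2k+1}{2}W_{\mathcal{P},S'}$ follows.

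For the sharper bound: when $T$ is reduced and $k>1$, use the improved estimate $2kn$ from \thref{res:reduced_key_lemma} in place of $(2k+1)n$, yielding $kW_{\mathcal{P},S'}$. When every edge stabiliser of $T$ is larger than $\mathcal{P}$, the final ``stable limit'' branch of the argument cannot actually occur: any connecting edge in a set of seed vertices for $T$ is an edge of $T$, whose stabiliser is not in $\mathcal{P}$, contradicting the hypothesis that all connecting groups there lie in $\mathcal{P}$. Hence every induction step must proceed via \thref{res:collapse_connecting_edge}, charging at most $k$ edges per unit drop of weight and producing the total bound $kW_{\mathcal{P},S'}$. The main delicate point is tracking the weight through a possibly infinite sequence of folds; the trivial-kernel hypothesis together with the interleaving of folds with elementary transformations from Section~\ref{sec:influence} is what makes this possible.
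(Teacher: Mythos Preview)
Your outline follows the shape of the proof of \thref{res:key_lemma_simple}, but there is a genuine gap at the point where you assert that ``after any fold the new edge stabiliser still injects into $G$ under $\phi$.'' This is not true in general, and more importantly it is not the relevant condition. The weight inequalities in \thref{res:forests_enjoy_folds} (for instance in Case~2 with a type~II fold, where one uses $\stab e_1 < \stab e'$ to halve the weight) require that \emph{proper} containments of stabilisers in the intermediate tree remain proper after applying $\phi$. For this one needs the kernel of $\phi$ to meet each \emph{vertex} stabiliser trivially, since the enlarged edge group $\stab e'=\langle E_1,h\rangle$ sits inside the vertex group $\stab x$; if $h\in\ker\phi$ the containment collapses under $\phi$ and the weight fails to drop. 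The hypothesis of the lemma only controls $\ker\phi$ on edge stabilisers of the \emph{original} $T'$, and after a type~III fold a previously hyperbolic element (possibly in $\ker\phi$) is absorbed into a vertex group, so the injectivity on vertex stabilisers is not preserved along the folding sequence.

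The paper deals with this by interleaving each fold with an application of \thref{res:extra_mod_P_closed}: one passes to a quotient group $G^{(j+1)}$ of $G^{(j)}$ (and a corresponding tree with the same underlying quotient graph) in which the vertex stabilisers once again inject into $G$, and then invokes the variant \thref{res:forests_folds_P_closed} whose hypothesis is precisely that $\ker\phi$ meets every vertex stabiliser trivially. This produces a diagonal array of trees and groups rather than a single folding sequence over a fixed $G'$. Your proposal needs this quotienting step; without it the induction on $W_{\mathcal{P},S'}$ cannot proceed, because a fold may leave the $\phi$-weight unchanged while genuinely enlarging stabilisers in $G'$.
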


First observe that the following variation of \thref{res:forests_enjoy_folds} and \thref{res:free_action_folds} holds with the exact same proof as before.

\begin{lemma}\thlabel{res:forests_folds_P_closed}
Let $\phi: H \rightarrow G$ be a surjective homomorphism and $\mathcal{P}$ is a conjugation invariant set of subgroups for $G$.  Suppose that there is a $H$--equivariant map $\alpha: R \rightarrow \tilde{R}$ which is a fold. Suppose that the kernel of $\phi$ has trivial intersection with each vertex stabiliser of $R$. Suppose that $S$ is a set of seed vertices for $R$ where the image of each connecting group is in $\mathcal{P}$. Then there is a set of seed vertices $\tilde{S}$ for $\tilde{R}$ with ${W_{\mathcal{P},\phi,\tilde{S}} \leq W_{\mathcal{P},\phi,S}}$. Moreover if ${W_{\mathcal{P},\phi,\tilde{S}} = W_{\mathcal{P},\phi,S}}$ then $\alpha|_{S}$ is injective.
\end{lemma}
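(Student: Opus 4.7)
The plan is to run through the case analysis of \thref{res:forests_enjoy_folds} verbatim, with essentially one change: every containment between edge or vertex stabilisers that was used in the original weight estimate is now applied after pushing forward by $\phi$. The hypothesis that $\ker \phi$ meets every vertex stabiliser of $R$ trivially ensures $\phi$ restricts to an isomorphism on every vertex stabiliser, and hence on every edge stabiliser, of $R$. So any (strict) inclusion $\stab e < \stab f$ between stabilisers of edges or vertices of $R$ maps to a (strict) inclusion $\phi(\stab e) < \phi(\stab f)$ in $G$, and every application of \thref{res:weight_basics}~\ref{pt:extention_weight} that appeared in the original proof yields the same inequality when the weights are read off via $\phi$.

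With this observation in hand I would split into the same cases as before according to whether $e_1$ and $e_2$ are both, neither, or only one of them contained in some forest of influence (with Case 3 further subdivided by whether $e_1 = f_1$ and by the relation between $\stab y_1$ and $\stab y_2$). In each case the same choice of $\tilde{S}$ — one of $\alpha(S)$, $\alpha(S) \cup H\{e'\}$, or $\alpha(S) \cup H\{y'\}$ — produces a set of seed vertices for $\tilde{R}$; edge and vertex stabilisers of $\tilde{R}$ are generated by (and in the orientation-preserving fold setting are in fact joins of) stabilisers of $R$, which all lift isomorphically under $\phi$, so the hypothesis on $\ker \phi$ is inherited. The weight inequality $W_{\mathcal{P},\phi,\tilde{S}} \leq W_{\mathcal{P},\phi,S}$ is then exactly the transcription of the fractional estimates in \thref{res:forests_enjoy_folds}. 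The free-action portion (analogue of \thref{res:free_action_folds}) follows the same way: a fold that destroys freeness must be of type III, and taking $\tilde{S}$ to be the orbit of the unique new vertex with non-trivial stabiliser gives $W_{\mathcal{P},\phi,\tilde{S}} = \beta_1(\tilde{R}/H) W_{\mathcal{P},1} = (\beta_1(R/H)-1) W_{\mathcal{P},1} \leq W_{\mathcal{P},\phi,S}$.

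For the ``moreover'' statement, non-injectivity of $\alpha|_S$ forces two seed vertices to be folded together, which can only occur in a type II or type III fold with $\{y_1, y_2\} \subseteq S$; inspection of the relevant subcases shows the weight strictly drops in each of these situations. The main obstacle, such as it is, is purely bookkeeping: tracking which inequalities live in $H$ versus $G$ when reading off $\mathcal{P}$-weights. Since every weight estimate in the original proof is driven by stabiliser containments in $R$, and all such containments are preserved by $\phi$ thanks to the kernel hypothesis, the translation is mechanical rather than substantive.
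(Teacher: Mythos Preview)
Your proposal is correct and matches the paper's own treatment exactly: the paper simply states that this lemma ``holds with the exact same proof as before'' (i.e.\ the proof of \thref{res:forests_enjoy_folds} and \thref{res:free_action_folds}), and your explanation of why---that the kernel hypothesis makes $\phi$ injective on each vertex stabiliser of $R$, so every (strict) stabiliser containment used in the weight estimates survives pushforward to $G$---is precisely the mechanism that makes the transcription work.

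One incidental remark you make is not correct, though it is also not needed: you claim the kernel hypothesis on $\phi$ is inherited by $\tilde{R}$ because vertex stabilisers of $\tilde{R}$ are generated by stabilisers of $R$. Injectivity of $\phi$ on subgroups $A$ and $B$ does not imply injectivity on $\langle A,B\rangle$, and indeed the whole purpose of \thref{res:extra_mod_P_closed} in the paper is to restore this hypothesis after each fold by passing to a quotient group. Since the present lemma makes no assertion about $\ker\phi$ relative to $\tilde{R}$, this does not affect your argument; just drop that clause.
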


After each step elements in the kernel of $\phi$ may end up acting elliptically on the intermediate tree. As such we need a way of modifying a group $G_i$ and tree $T_i$ which essentially keeps the action and map $\Psi_i: T_i \rightarrow T$ but removes problematic group elements found in the kernel of $\phi: G_i \rightarrow G$.

\begin{lemma}\thlabel{res:extra_mod_P_closed}
Let $\phi: G' \rightarrow G$ be a surjective group homomorphism and suppose that $G'$ acts on a tree $T'$. Then there's a group $G''$ acting on a tree $T''$ together with surjective homomorphisms $\phi': G'' \rightarrow G$ and $\sigma: G' \rightarrow G''$ and a $G'$--equivarient simplical map $\rho: T' \rightarrow T''$. (The action of $G'$ on $T''$ is given by $\sigma$.) Additionally $\rho/G: T'/G \rightarrow T''/G$ is a homeomorphism of graphs with $\phi$ having trivial intersection with each edge stabiliser of $T'$ and $\sigma(\stab e) = \sigma(\stab \rho(e))$. Moreover the kernel of $\phi'$ has trivial intersection with every vertex stabiliser of $T''$. 

Hence if $T'$ has a set of seed vertices $S'$ then $S'' := \rho(S')$ is a set of seed vertices for $T''$ with $W_{\mathcal{P},S''} = W_{\mathcal{P},S'}$.
\end{lemma}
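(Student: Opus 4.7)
The plan is to quotient $G'$ by the subgroup of elements of $\ker\phi$ that act elliptically on $T'$, and to build $T''$ as the Bass--Serre tree of an associated graph of groups.

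Concretely, let $N$ be the normal closure in $G'$ of $\bigcup_v \bigl(\stab_{G'}(v) \cap \ker\phi\bigr)$, where $v$ ranges over the vertices of $T'$. Since $\ker\phi$ is normal, $N \leq \ker\phi$; define $G'' := G'/N$ with quotient $\sigma: G' \to G''$, and let $\phi': G'' \to G$ be the induced surjection satisfying $\phi = \phi' \circ \sigma$. Let $X = T'/G'$ with graph of groups $(\mathbb{G}', X)$ corresponding to the $G'$--action on $T'$. Define a new graph of groups $(\mathbb{G}'', X)$ on the same underlying graph by setting vertex groups to $\sigma(G_v)$, edge groups to $\sigma(G_e)$, and edge-to-vertex inclusions by restricting $\sigma$ to those in $\mathbb{G}'$; these remain injective as subset inclusions in $G''$. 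Take $T''$ to be the Bass--Serre tree of $\mathbb{G}''$ and let $\rho: T' \to T''$ be the canonical $G'$--equivariant simplicial map covering the identity on $X$.

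The identification $\pi_1(\mathbb{G}'') \cong G''$ follows from the standard presentation of the fundamental group of a graph of groups: the morphism $\mathbb{G}' \to \mathbb{G}''$ (identity on $X$, $\sigma$ on groups) induces a surjection $G' \to \pi_1(\mathbb{G}'')$ whose kernel is exactly $N$. Under this identification the stabilizers of the canonical lifts satisfy $\stab_{G''}(\rho(x)) = \sigma(\stab_{G'}(x))$ for both vertices and edges $x$ of $T'$, which gives that $\rho/G'$ is a homeomorphism $X \to T''/G''$ and that $\sigma(\stab e) = \stab \rho(e)$. For the main claim, that $\ker\phi'$ has trivial intersection with every vertex stabilizer of $T''$: if $\sigma(g) \in \sigma(\stab_{G'}(v)) \cap \ker\phi'$ then $g = g_v n$ with $g_v \in \stab_{G'}(v)$ and $n \in N$, and $\phi(g_v) = \phi(g)\phi(n)^{-1} = 1$ puts $g_v \in \stab_{G'}(v) \cap \ker\phi \subseteq N$, so $g \in N$ and $\sigma(g) = 1$. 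Finally, a set of seed vertices $S'$ for $T'$ pushes forward to a set of seed vertices $S'' := \rho(S')$ for $T''$ with the same $\mathcal{P}$--weight, since containment of stabilizers is preserved by $\sigma$ and the $\mathcal{P}$--weight of a subgroup of $G'$ depends only on its image in $G$.

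The main technical point is the identification $\pi_1(\mathbb{G}'') \cong G''$, which amounts to verifying that the relations introduced in passing from $\mathbb{G}'$ to $\mathbb{G}''$ normally generate exactly $N$ in $G'$; this is standard but requires careful bookkeeping with the presentation via vertex generators and stable letters. A subtler conceptual point --- and the reason \thref{res:extra_mod_P_closed} is needed in the iterative scheme of \thref{res:key_lemma_P_closed} --- is that the hypothesis on $\ker\phi$ in the ambient lemma controls only edge stabilizers in $T'$, so a priori nothing prevents elements of $\ker\phi$ from fixing vertices after a fold; the construction above flushes out all such obstructions in one canonical step while preserving the combinatorial shape of the quotient graph and the edge stabilizers.
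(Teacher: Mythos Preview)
Your proposal is correct and follows essentially the same approach as the paper: both replace the vertex groups in the graph of groups for $T'$ by their images under $\phi$ and take $G''$ to be the resulting fundamental group, with $T''$ its Bass--Serre tree. You phrase it as first forming the quotient $G'' = G'/N$ with $N$ the normal closure of the elliptic part of $\ker\phi$, then verifying this agrees with $\pi_1(\mathbb{G}'')$, whereas the paper defines $G''$ directly as the fundamental group; these are the same construction, and your version supplies considerably more detail (notably the explicit check that $\ker\phi'$ meets each vertex stabiliser trivially) than the paper's terse sketch.
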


\begin{proof}
We define $G''$ as the fundamental group of a graph of groups decomposition corresponding to $T'$ but with each vertex label replaced with its image under $\phi$ and let $T''$ be the corresponding Bass-Serre tree. This naturally induces maps $\phi': G'' \rightarrow G$ and $\tilde{\phi}: G' \rightarrow G''$ and $\tilde{\Psi}: T'' \rightarrow T$. Moreover we naturally get a set of seed vertices $S''$ for $T''$ with $W_{\mathcal{P}_{\phi'},S''} = W_{\mathcal{P}_{\phi},S'}$. $\square$
\end{proof}

\begin{proof}[Proof of \thref{res:key_lemma_P_closed}]
As before we will proceed by induction on $W_{\mathcal{P},S}$. If $W_{\mathcal{P},S} = 0$ then there is some vertex of $T'$ which is fixed by $G$. So the image of this vertex in $T$ is fixed by $G$ and so as $T$ is minimal it must just consist of this single vertex. 

Use Stallings folding theorem (\thref{res:stallings_folding_thm}) to decompose $\alpha$ into folds ${\alpha^{(0)}_i: T^{(0)}_{i-1} \rightarrow T^{(0)}_{i}}$ and let $G^{(0)} := G'$. We will iteratively define groups $G^{(j)}$ which act on trees $T^{(j)}_{i}$ with sets of seed vertices $S^{(j)}_{i}$ for $T^{(j)}_{i}$. Also we will define maps $\sigma^{(j)}: G^{(j)} \rightarrow G^{(j+1)}$ and $\sigma^{(j)}: G^{(j)} \rightarrow G$ together with $\alpha^{(j)}_i: T^{(j)}_{i-1} \rightarrow T^{(j)}_{i}$ and $\rho^{(j)}_{i}: T^{(j)}_{i} \rightarrow T^{(j+1)}_{i}$ (for $i \geq j - 1$).
\begin{equation*}
\begin{tikzcd}
T^{(0)}_{0} \arrow[r, "\alpha^{(0)}_1"] \arrow[d, "\rho^{(0)}_{0}", red] & T^{(0)}_{1} \arrow[r, "\alpha^{(0)}_2"] \arrow[d, "\rho^{(0)}_{1}"] & T^{(0)}_{2} \arrow[r, "\alpha^{(0)}_3"] \arrow[d, "\rho^{(0)}_{2}"] & \cdots \\
T^{(1)}_{0} \arrow[r, "\alpha^{(1)}_1", red] & T^{(1)}_{1} \arrow[r, "\alpha^{(1)}_2"] \arrow[d, "\rho^{(1)}_{1}", red] & T^{(1)}_{2} \arrow[r, "\alpha^{(1)}_3"] \arrow[d, "\rho^{(1)}_{2}"] & \cdots \\
& T^{(2)}_{1} \arrow[r, "\alpha^{(2)}_2", red] & T^{(2)}_{2} \arrow[r, "\alpha^{(2)}_3"] \arrow[d, "\rho^{(2)}_{2}", red] & \cdots \\
& & \vdots & \ddots
\end{tikzcd}
\end{equation*}
First we define $\beta_j := \alpha^{(j)}_{j} \circ \rho^{(j-1)}_{j-1} \circ \cdots \circ \rho^{(1)}_{1} \circ \alpha^{(1)}_{1} \circ \rho^{(0)}_{0}$. (Part of the red path along the bottom of the diagram which ends with a right facing arrow.) First use \thref{res:extra_mod_P_closed} on $\beta_j$ we obtain a map $\rho^{(j)}: T^{(j)}_{j} \rightarrow T^{(j+1)}_{j}$ together with group homomorphisms $\sigma^{(j)}: G^{(j)} \rightarrow G^{(j+1)}$ and $\phi^{(j+1)}: G^{(j+1)} \rightarrow G$. Also $S^{(j+1)}_{j} := \rho^{(j)}(S^{(j)}_{j})$ is a set of seed vertices where the images of the connecting groups are in $\mathcal{P}$ and $W_{\mathcal{P},S^{(j)}_{j-1}} \leq W_{\mathcal{P},S^{(j-1)}_{j-1}}$. If $\alpha^{(j)}_i$ folds together the edges $e_1$ and $e_2$ we define $\alpha^{(j+1)}_i$ (for $i > j$) to be the fold (or identity) obtained by identifying the $\rho^{(j)}_{i+1}$ images of the $e_i$. Finally define $\rho^{(j+1)}_i$ (for $i > j + 1$) to make the above diagram commute. 

Finally \thref{res:forests_folds_P_closed} says that the fold $\alpha^{(j)}_j$ induces a set of seed vertices $S^{(j)}_{j}$ on $T^{(j)}_{j}$ with $W_{\mathcal{P},S^{(j)}_{j}} \leq W_{\mathcal{P},S^{(j)}_{j-1}}$. (If a separating edge is larger than $\mathcal{P}$ then we can reduce the $\mathcal{P}$--weight by collapsing at most $k$ edges using \thref{res:collapse_connecting_edge} and then proceeding by induction.) 

As $\alpha = \cdots \alpha^{(0)}_2 \circ \alpha^{(0)}_1$ we see that $\alpha = \cdots \alpha^{(2)}_2 \circ \rho^{(1)}_{1} \circ \alpha^{(1)}_1 \circ \rho^{(0)}_{0}$ by the definitions of $\alpha^{(j)}_i$ and $\rho^{(j)}_{i}$. Moreover at each step we have a set of seed vertices with non-increasing $\mathcal{P}$--weight, the number of orbits of connecting edges are non-decreasing and hence that all but finitely many of the sets of seed vertices are the image of the seed vertices at the previous level. At this point the proof is exactly the same as the proof of \thref{res:key_lemma_simple}. $\square$
\end{proof}

\begin{proof}[Proof of \thref{res:main_result} \ref{pt:main_P_closed}]
Pick a minimal generating set $X$ for $G$ and let $G' := F(X)$. Let $\phi: G' \rightarrow G$ be the natural projection and let $T'$ be the tree corresponding to the rose with $\rank G$ petals labelled by the elements of $X$. Let $\Psi: T' \rightarrow T$ be any $G'$--equivarient combinatorial map. If $G$ acts freely on $T$ then (make new lemma) implies that $T$ has at most $3(\rank G - 1)$ edges. Let $S$ be the set of vertices of $T'$ before subdividing. Observe that $S$ is a set of seed vertices for $T'$ and that it has $\mathcal{P}$--weight of at most $2^M \rank(G)$ since $\mathcal{P}$ has height $M$. Hence by \thref{res:key_lemma_P_closed} $T$ has as most $(2k+1) 2^{M-1} (\rank(G) - 1)$ edges. If all of the edges of $T$ have stabiliser larger than $\mathcal{P}$ or $T$ is reduced with $k>1$ then the number of edges is in fact bounded by $k 2^{M} (\rank(G) - 1)$. $\square$
\end{proof}

\section{Sharpness of bounds}\label{sec:sharp}

We will now restrict our attention to the case where $\mathcal{P} = \mathtt{1}$, the collection which only contains the trivial subgroup. In other words we are to consider actions which are $k$--acylindrical. In \cite{WeidmannInitial} Weidmann showed that a finitely generated group acting $k$--acylindrically on a tree where all the edges have non-trivial stabiliser has at most $2k(\rank G - 1)$ orbits of edges. The purpose of this section is to prove \thref{res:sharp_bound}, which improves this bound to $2k \left( \rank G - \frac{5}{4} \right)$ edges, and to construct an example which shows that this is the best possible bound. (\thref{res:sharp_example}) Additionally we'll refine this further to $2k \left( \rank G - \frac{3}{2} \right)$ the the case where the group is torsion-free. 

\begin{proof}[Proof of \thref{res:sharp_example}]
We need to show that for any integers $k \geq 1$ and $r \geq 2$ that there is a group of rank $r$ acting $k$-acylindrically on a tree with $\left\lfloor 2k \left( \rank G - \frac{5}{4} \right) \right\rfloor$ orbits of edges, none of which have trivial stabilisers. Pick distinct primes $p$ and $q$ such that $(p-1)(q-1) \geq 2(r-2)$. Let $G := \left\langle a, h_1, \cdots, h_{r-1} \:\: | \:\: a^{pq} = 1 \right\rangle \cong \left( \frac{\mathbb{Z}}{pq\mathbb{Z}} \right) * F_{r-1}$ and note that $\rank G = r$. We will now construct a tree $T$ for $G$ to act on. Start with the graph of groups decomposition consisting of the rose with $r-1$ petals representing the $h_i$ and with a single vertex on the loop representing $h_1$ with label $\left\langle a \right\rangle$. (In the diagrams we take $k = 4$ and $r = 3$.) 

\noindent%% Creator: Inkscape inkscape 0.92.4, www.inkscape.org
%% PDF/EPS/PS + LaTeX output extension by Johan Engelen, 2010
%% Accompanies image file '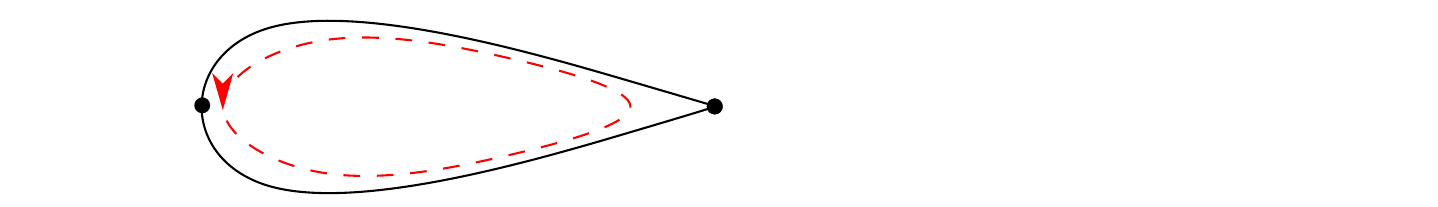' (pdf, eps, ps)
%%
%% To include the image in your LaTeX document, write
%%   \input{<filename>.pdf_tex}
%%  instead of
%%   \includegraphics{<filename>.pdf}
%% To scale the image, write
%%   \def\svgwidth{<desired width>}
%%   \input{<filename>.pdf_tex}
%%  instead of
%%   \includegraphics[width=<desired width>]{<filename>.pdf}
%%
%% Images with a different path to the parent latex file can
%% be accessed with the `import' package (which may need to be
%% installed) using
%%   \usepackage{import}
%% in the preamble, and then including the image with
%%   \import{<path to file>}{<filename>.pdf_tex}
%% Alternatively, one can specify
%%   \graphicspath{{<path to file>/}}
%% 
%% For more information, please see info/svg-inkscape on CTAN:
%%   http://tug.ctan.org/tex-archive/info/svg-inkscape
%%
\begingroup%
  \makeatletter%
  \providecommand\color[2][]{%
    \errmessage{(Inkscape) Color is used for the text in Inkscape, but the package 'color.sty' is not loaded}%
    \renewcommand\color[2][]{}%
  }%
  \providecommand\transparent[1]{%
    \errmessage{(Inkscape) Transparency is used (non-zero) for the text in Inkscape, but the package 'transparent.sty' is not loaded}%
    \renewcommand\transparent[1]{}%
  }%
  \providecommand\rotatebox[2]{#2}%
  \newcommand*\fsize{\dimexpr\f@size pt\relax}%
  \newcommand*\lineheight[1]{\fontsize{\fsize}{#1\fsize}\selectfont}%
  \ifx\svgwidth\undefined%
    \setlength{\unitlength}{413.85826772bp}%
    \ifx\svgscale\undefined%
      \relax%
    \else%
      \setlength{\unitlength}{\unitlength * \real{\svgscale}}%
    \fi%
  \else%
    \setlength{\unitlength}{\svgwidth}%
  \fi%
  \global\let\svgwidth\undefined%
  \global\let\svgscale\undefined%
  \makeatother%
  \begin{picture}(1,0.14383562)%
    \lineheight{1}%
    \setlength\tabcolsep{0pt}%
    \put(0.03662987,0.3576232){\color[rgb]{0,0,0}\makebox(0,0)[lt]{\begin{minipage}{0.93116981\unitlength}\centering \end{minipage}}}%
    \put(0,0){\includegraphics[width=\unitlength,page=1]{SharpExampleWTorsionStep0.pdf}}%
    \put(0.10748477,0.06315341){\color[rgb]{0,0,0.50196078}\makebox(0,0)[t]{\lineheight{1.25}\smash{\begin{tabular}[t]{c}$\langle a \rangle$\end{tabular}}}}%
    \put(0.4965513,0.08614861){\color[rgb]{0,0,0.50196078}\makebox(0,0)[t]{\lineheight{1.25}\smash{\begin{tabular}[t]{c}$1$\end{tabular}}}}%
    \put(0.18507187,0.06507455){\color[rgb]{1,0,0}\makebox(0,0)[t]{\lineheight{1.25}\smash{\begin{tabular}[t]{c}$h_1$\end{tabular}}}}%
    \put(-0.40547742,2.13428267){\color[rgb]{0,0,0}\makebox(0,0)[lt]{\begin{minipage}{2.54478703\unitlength}\centering \end{minipage}}}%
    \put(0,0){\includegraphics[width=\unitlength,page=2]{SharpExampleWTorsionStep0.pdf}}%
    \put(0.80969813,0.06507665){\color[rgb]{1,0,0}\makebox(0,0)[t]{\lineheight{1.25}\smash{\begin{tabular}[t]{c}$h_2$\end{tabular}}}}%
    \put(0.87605793,0.06388959){\color[rgb]{0,0.50196078,0}\makebox(0,0)[t]{\lineheight{1.25}\smash{\begin{tabular}[t]{c}$1$\end{tabular}}}}%
  \end{picture}%
\endgroup%

Subdivide the loop representing $h_1$ so that it consists of $\left\lceil \frac{k}{2} \right\rceil + 2$ edges. Apply folds of type II to ``pull'' $a$ onto each vertex on the loop except the central one. 

\noindent%% Creator: Inkscape inkscape 0.92.4, www.inkscape.org
%% PDF/EPS/PS + LaTeX output extension by Johan Engelen, 2010
%% Accompanies image file '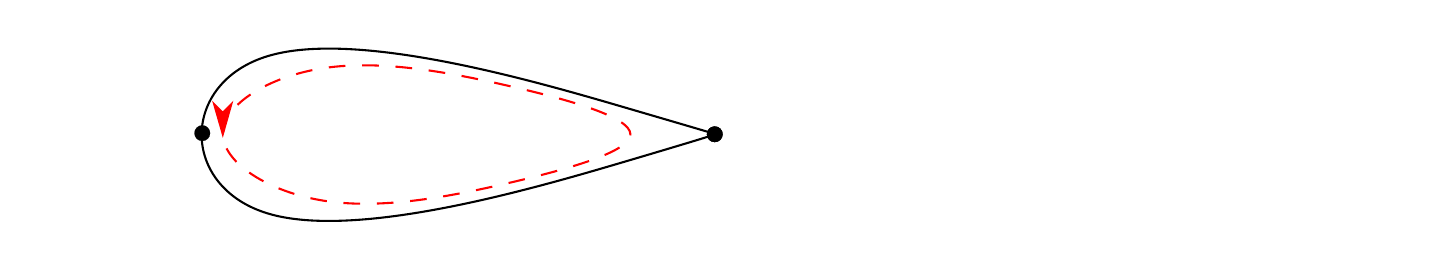' (pdf, eps, ps)
%%
%% To include the image in your LaTeX document, write
%%   \input{<filename>.pdf_tex}
%%  instead of
%%   \includegraphics{<filename>.pdf}
%% To scale the image, write
%%   \def\svgwidth{<desired width>}
%%   \input{<filename>.pdf_tex}
%%  instead of
%%   \includegraphics[width=<desired width>]{<filename>.pdf}
%%
%% Images with a different path to the parent latex file can
%% be accessed with the `import' package (which may need to be
%% installed) using
%%   \usepackage{import}
%% in the preamble, and then including the image with
%%   \import{<path to file>}{<filename>.pdf_tex}
%% Alternatively, one can specify
%%   \graphicspath{{<path to file>/}}
%% 
%% For more information, please see info/svg-inkscape on CTAN:
%%   http://tug.ctan.org/tex-archive/info/svg-inkscape
%%
\begingroup%
  \makeatletter%
  \providecommand\color[2][]{%
    \errmessage{(Inkscape) Color is used for the text in Inkscape, but the package 'color.sty' is not loaded}%
    \renewcommand\color[2][]{}%
  }%
  \providecommand\transparent[1]{%
    \errmessage{(Inkscape) Transparency is used (non-zero) for the text in Inkscape, but the package 'transparent.sty' is not loaded}%
    \renewcommand\transparent[1]{}%
  }%
  \providecommand\rotatebox[2]{#2}%
  \newcommand*\fsize{\dimexpr\f@size pt\relax}%
  \newcommand*\lineheight[1]{\fontsize{\fsize}{#1\fsize}\selectfont}%
  \ifx\svgwidth\undefined%
    \setlength{\unitlength}{413.85826772bp}%
    \ifx\svgscale\undefined%
      \relax%
    \else%
      \setlength{\unitlength}{\unitlength * \real{\svgscale}}%
    \fi%
  \else%
    \setlength{\unitlength}{\svgwidth}%
  \fi%
  \global\let\svgwidth\undefined%
  \global\let\svgscale\undefined%
  \makeatother%
  \begin{picture}(1,0.18493151)%
    \lineheight{1}%
    \setlength\tabcolsep{0pt}%
    \put(0.03662987,0.3576232){\color[rgb]{0,0,0}\makebox(0,0)[lt]{\begin{minipage}{0.93116981\unitlength}\centering \end{minipage}}}%
    \put(0,0){\includegraphics[width=\unitlength,page=1]{SharpExampleWTorsionStep1.pdf}}%
    \put(0.10748477,0.08490032){\color[rgb]{0,0,0.50196078}\makebox(0,0)[t]{\lineheight{1.25}\smash{\begin{tabular}[t]{c}$\langle a \rangle$\end{tabular}}}}%
    \put(0.4965513,0.10789552){\color[rgb]{0,0,0.50196078}\makebox(0,0)[t]{\lineheight{1.25}\smash{\begin{tabular}[t]{c}$1$\end{tabular}}}}%
    \put(0.18507187,0.08682146){\color[rgb]{1,0,0}\makebox(0,0)[t]{\lineheight{1.25}\smash{\begin{tabular}[t]{c}$h_1$\end{tabular}}}}%
    \put(-0.40547742,2.13428267){\color[rgb]{0,0,0}\makebox(0,0)[lt]{\begin{minipage}{2.54478703\unitlength}\centering \end{minipage}}}%
    \put(0,0){\includegraphics[width=\unitlength,page=2]{SharpExampleWTorsionStep1.pdf}}%
    \put(0.21444995,0.16415196){\color[rgb]{0,0,0.50196078}\makebox(0,0)[t]{\lineheight{1.25}\smash{\begin{tabular}[t]{c}$\langle a \rangle$\end{tabular}}}}%
    \put(0.21565467,0.00390123){\color[rgb]{0,0,0.50196078}\makebox(0,0)[t]{\lineheight{1.25}\smash{\begin{tabular}[t]{c}$\langle a \rangle$\end{tabular}}}}%
    \put(0,0){\includegraphics[width=\unitlength,page=3]{SharpExampleWTorsionStep1.pdf}}%
    \put(0.80969813,0.08682324){\color[rgb]{1,0,0}\makebox(0,0)[t]{\lineheight{1.25}\smash{\begin{tabular}[t]{c}$h_2$\end{tabular}}}}%
    \put(0.87605793,0.08563619){\color[rgb]{0,0.50196078,0}\makebox(0,0)[t]{\lineheight{1.25}\smash{\begin{tabular}[t]{c}$1$\end{tabular}}}}%
  \end{picture}%
\endgroup%

Next subdivide the edges on the loop representing $h_1$ which are adjacent to the central vertex into $\left\lfloor \frac{k}{2} \right\rfloor$ sub-edges. Apply folds of type II which ``pull'' $a^p$ along one of these series of edges and ``pull'' $a^q$ along the other. We see that the central vertex has stabiliser $H := \left\langle b,c \:\: | \:\: b^{q} = c^{p} = 1 \right\rangle \cong \left( \frac{\mathbb{Z}}{p\mathbb{Z}} \right) * \left( \frac{\mathbb{Z}}{q\mathbb{Z}} \right)$ where $b = a^{p}$ and $c = \left( a^{q} \right)^{h_1}$. 

\noindent%% Creator: Inkscape inkscape 0.92.4, www.inkscape.org
%% PDF/EPS/PS + LaTeX output extension by Johan Engelen, 2010
%% Accompanies image file '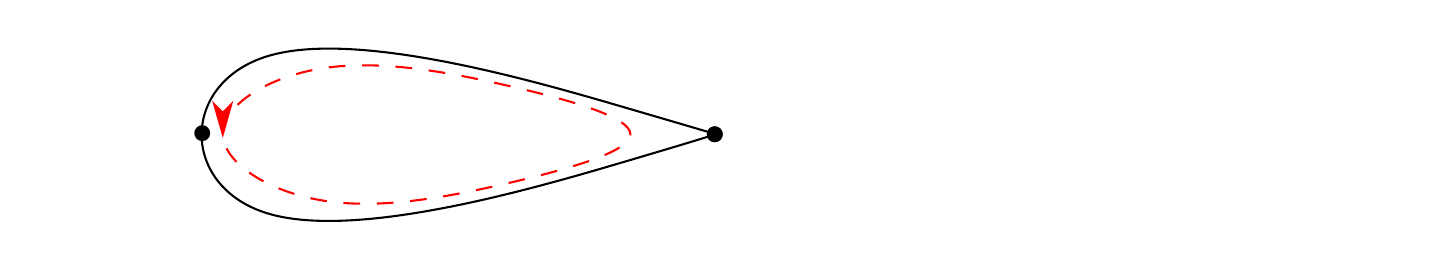' (pdf, eps, ps)
%%
%% To include the image in your LaTeX document, write
%%   \input{<filename>.pdf_tex}
%%  instead of
%%   \includegraphics{<filename>.pdf}
%% To scale the image, write
%%   \def\svgwidth{<desired width>}
%%   \input{<filename>.pdf_tex}
%%  instead of
%%   \includegraphics[width=<desired width>]{<filename>.pdf}
%%
%% Images with a different path to the parent latex file can
%% be accessed with the `import' package (which may need to be
%% installed) using
%%   \usepackage{import}
%% in the preamble, and then including the image with
%%   \import{<path to file>}{<filename>.pdf_tex}
%% Alternatively, one can specify
%%   \graphicspath{{<path to file>/}}
%% 
%% For more information, please see info/svg-inkscape on CTAN:
%%   http://tug.ctan.org/tex-archive/info/svg-inkscape
%%
\begingroup%
  \makeatletter%
  \providecommand\color[2][]{%
    \errmessage{(Inkscape) Color is used for the text in Inkscape, but the package 'color.sty' is not loaded}%
    \renewcommand\color[2][]{}%
  }%
  \providecommand\transparent[1]{%
    \errmessage{(Inkscape) Transparency is used (non-zero) for the text in Inkscape, but the package 'transparent.sty' is not loaded}%
    \renewcommand\transparent[1]{}%
  }%
  \providecommand\rotatebox[2]{#2}%
  \newcommand*\fsize{\dimexpr\f@size pt\relax}%
  \newcommand*\lineheight[1]{\fontsize{\fsize}{#1\fsize}\selectfont}%
  \ifx\svgwidth\undefined%
    \setlength{\unitlength}{413.85826772bp}%
    \ifx\svgscale\undefined%
      \relax%
    \else%
      \setlength{\unitlength}{\unitlength * \real{\svgscale}}%
    \fi%
  \else%
    \setlength{\unitlength}{\svgwidth}%
  \fi%
  \global\let\svgwidth\undefined%
  \global\let\svgscale\undefined%
  \makeatother%
  \begin{picture}(1,0.18493151)%
    \lineheight{1}%
    \setlength\tabcolsep{0pt}%
    \put(0.03662987,0.3576232){\color[rgb]{0,0,0}\makebox(0,0)[lt]{\begin{minipage}{0.93116981\unitlength}\centering \end{minipage}}}%
    \put(0,0){\includegraphics[width=\unitlength,page=1]{SharpExampleWTorsionStep2.pdf}}%
    \put(0.10748477,0.08490032){\color[rgb]{0,0,0.50196078}\makebox(0,0)[t]{\lineheight{1.25}\smash{\begin{tabular}[t]{c}$\langle a \rangle$\end{tabular}}}}%
    \put(0.4965513,0.10789552){\color[rgb]{0,0,0.50196078}\makebox(0,0)[t]{\lineheight{1.25}\smash{\begin{tabular}[t]{c}$\langle b,c \rangle$\end{tabular}}}}%
    \put(0.18507187,0.08682146){\color[rgb]{1,0,0}\makebox(0,0)[t]{\lineheight{1.25}\smash{\begin{tabular}[t]{c}$h_1$\end{tabular}}}}%
    \put(-0.40547742,2.13428267){\color[rgb]{0,0,0}\makebox(0,0)[lt]{\begin{minipage}{2.54478703\unitlength}\centering \end{minipage}}}%
    \put(0,0){\includegraphics[width=\unitlength,page=2]{SharpExampleWTorsionStep2.pdf}}%
    \put(0.21444995,0.16415196){\color[rgb]{0,0,0.50196078}\makebox(0,0)[t]{\lineheight{1.25}\smash{\begin{tabular}[t]{c}$\langle a \rangle$\end{tabular}}}}%
    \put(0.21565467,0.00390123){\color[rgb]{0,0,0.50196078}\makebox(0,0)[t]{\lineheight{1.25}\smash{\begin{tabular}[t]{c}$\langle a \rangle$\end{tabular}}}}%
    \put(0,0){\includegraphics[width=\unitlength,page=3]{SharpExampleWTorsionStep2.pdf}}%
    \put(0.3517901,0.14808885){\color[rgb]{0,0,0.50196078}\makebox(0,0)[t]{\lineheight{1.25}\smash{\begin{tabular}[t]{c}$\langle a^p \rangle$\end{tabular}}}}%
    \put(0.35219169,0.01878043){\color[rgb]{0,0,0.50196078}\makebox(0,0)[t]{\lineheight{1.25}\smash{\begin{tabular}[t]{c}$\langle a^q \rangle$\end{tabular}}}}%
    \put(0,0){\includegraphics[width=\unitlength,page=4]{SharpExampleWTorsionStep2.pdf}}%
    \put(0.80969813,0.08682324){\color[rgb]{1,0,0}\makebox(0,0)[t]{\lineheight{1.25}\smash{\begin{tabular}[t]{c}$h_2$\end{tabular}}}}%
    \put(0.87605793,0.08563619){\color[rgb]{0,0.50196078,0}\makebox(0,0)[t]{\lineheight{1.25}\smash{\begin{tabular}[t]{c}$1$\end{tabular}}}}%
  \end{picture}%
\endgroup%

For $i \leq 2(r-2)$ we define $g_i = b^{x+1} c^{y+1}$ where $i = (p-1)x + y$ for $x,y \in \mathbb{Z}$ with $0 \leq y \leq p-2$. Since $(p-1)(q-1) \geq 2(r-2)$ we see that these $g_i$ represent pairwise non-conjugate elements of $H$. Subdivide the loop representing each $h_j$ (for $2 \leq j \leq r-1$) into $2k$ sub-edges, then apply folds of type II which ``pulls'' $g_{2j-3}$ along $k$ edges starting at one end and ``pulls'' $g_{2j-2}$ along $k$ edges starting at the other. 

\noindent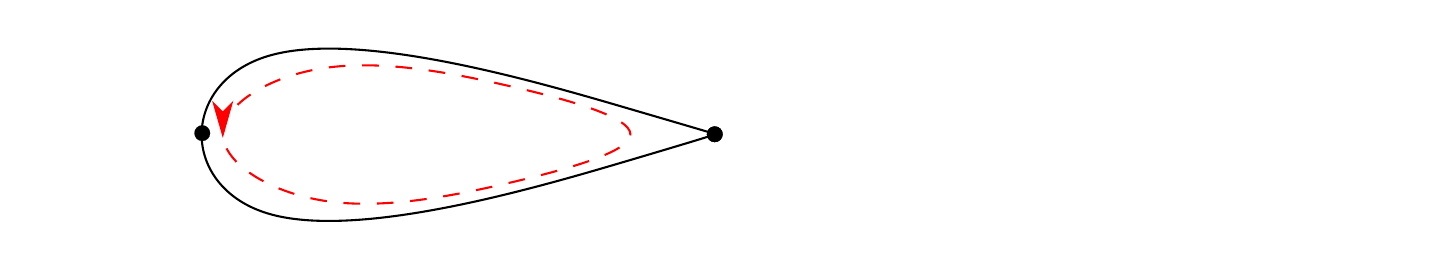 

Observe that this decomposition has $\left\lfloor (2r - \frac{5}{2})k \right\rfloor$ edges. (It's not reduced in general, but recall that this isn't a condition for this result.) It remains to check that the corresponding Bass-Serre tree is $k$--acylindrical. The elements of $G$ which act elliptically (upto conjugacy) are powers of $a$ and elements of $H$; so these are the ones we need to check fix a region of bounded diameter.  

First consider elements of $H$. The elements which fix an edge of our tree are (powers of) the $g_i$, $b$ and $c$ (upto conjugacy). As $b$ and $c$ are conjugate to powers of $a$ we'll leave these for now. Now observe that each $g_i$ has a different image in the ablieanisation of $H$; hence distinct $g_i$ are in different conjugacy classes. Moreover each cyclic root-closed subgroup of $H$ is malnormal in it. Hence each (power of) $g_i$ only fixes $k$ edges. 

\begin{figure}[h!]
\centering
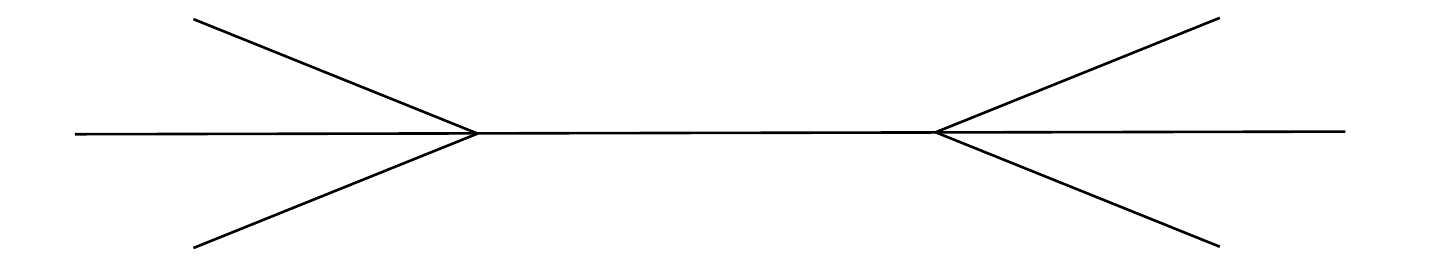
\caption{The region of the Bass-Serre tree which is fixed by some power of $a$, together with their stabilisers. The central arc with stabiliser $\left\langle a \right\rangle$ has length $\left\lceil \frac{k}{2} \right\rceil$ while each ``offshoot'' with stabiliser either $\left\langle a^p \right\rangle$ or $\left\langle a^q \right\rangle$ has length $\left\lfloor \frac{k}{2} \right\rfloor$.}
\label{fig:FixatorOfAPowers}
\end{figure}

We now need to consider powers of $a$. Let $1 \leq m < pq$ and look at the fixator of $a^m$. If $p \: | \: m$ then the fixator of $a^m$ consists of a central vertex with $q+1$ ``offshoots'', one of length $\left\lceil \frac{k}{2} \right\rceil$ and the rest of length $\left\lfloor \frac{k}{2} \right\rfloor$. In other words the fixator consists of the left and the centre parts of Figure~\ref{fig:FixatorOfAPowers}. This region has diameter $k$ and so we are fine. Likewise for the case where $q \: | \: m$. We cannot have $pq \: | \: m$ as $m < pq$. Finally if $m$ is coprime to $pq$ then $p^m$ just fixes a path of length $\left\lceil \frac{k}{2} \right\rceil$; the middle section of Figure~\ref{fig:FixatorOfAPowers}. 

Building an example which is maximal for torsion-free groups is similar. First we need $a$ to have infinite order and so $G \cong F_r$. The initial splitting is defined in the same way as before. Next we subdivide the loop representing $h_1$ into $k$ subedges and apply folds of type II so that each edge in this loop has label $\left\langle a \right\rangle$. (If $k=1$ then we collapse either of the initial edges of the loop instead.) The central vertex now has label isomorphic to the free group of rank $2$ which is generated by $b := a$ and $c := a^{h_1}$. We now subdivide and fold onto the loops representing the rest of the $h_j$ as before. $\square$

\noindent%% Creator: Inkscape inkscape 0.92.4, www.inkscape.org
%% PDF/EPS/PS + LaTeX output extension by Johan Engelen, 2010
%% Accompanies image file '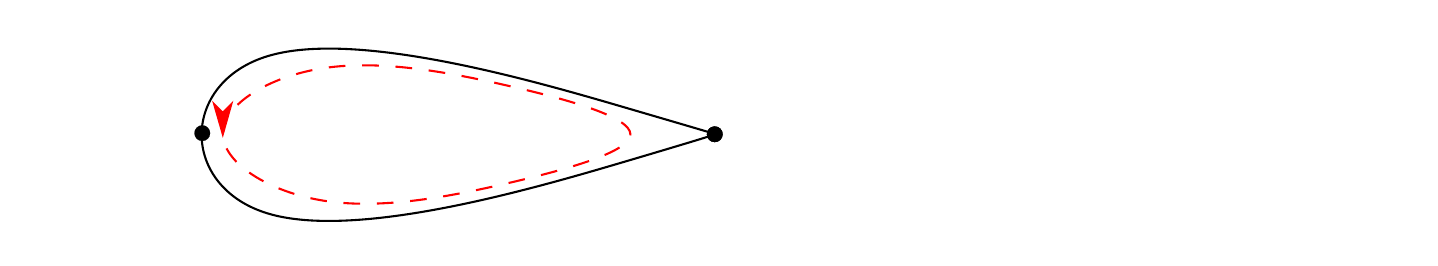' (pdf, eps, ps)
%%
%% To include the image in your LaTeX document, write
%%   \input{<filename>.pdf_tex}
%%  instead of
%%   \includegraphics{<filename>.pdf}
%% To scale the image, write
%%   \def\svgwidth{<desired width>}
%%   \input{<filename>.pdf_tex}
%%  instead of
%%   \includegraphics[width=<desired width>]{<filename>.pdf}
%%
%% Images with a different path to the parent latex file can
%% be accessed with the `import' package (which may need to be
%% installed) using
%%   \usepackage{import}
%% in the preamble, and then including the image with
%%   \import{<path to file>}{<filename>.pdf_tex}
%% Alternatively, one can specify
%%   \graphicspath{{<path to file>/}}
%% 
%% For more information, please see info/svg-inkscape on CTAN:
%%   http://tug.ctan.org/tex-archive/info/svg-inkscape
%%
\begingroup%
  \makeatletter%
  \providecommand\color[2][]{%
    \errmessage{(Inkscape) Color is used for the text in Inkscape, but the package 'color.sty' is not loaded}%
    \renewcommand\color[2][]{}%
  }%
  \providecommand\transparent[1]{%
    \errmessage{(Inkscape) Transparency is used (non-zero) for the text in Inkscape, but the package 'transparent.sty' is not loaded}%
    \renewcommand\transparent[1]{}%
  }%
  \providecommand\rotatebox[2]{#2}%
  \newcommand*\fsize{\dimexpr\f@size pt\relax}%
  \newcommand*\lineheight[1]{\fontsize{\fsize}{#1\fsize}\selectfont}%
  \ifx\svgwidth\undefined%
    \setlength{\unitlength}{413.85826772bp}%
    \ifx\svgscale\undefined%
      \relax%
    \else%
      \setlength{\unitlength}{\unitlength * \real{\svgscale}}%
    \fi%
  \else%
    \setlength{\unitlength}{\svgwidth}%
  \fi%
  \global\let\svgwidth\undefined%
  \global\let\svgscale\undefined%
  \makeatother%
  \begin{picture}(1,0.18493151)%
    \lineheight{1}%
    \setlength\tabcolsep{0pt}%
    \put(0.03662987,0.3576232){\color[rgb]{0,0,0}\makebox(0,0)[lt]{\begin{minipage}{0.93116981\unitlength}\centering \end{minipage}}}%
    \put(0,0){\includegraphics[width=\unitlength,page=1]{SharpExampleTF.pdf}}%
    \put(0.10748477,0.08490032){\color[rgb]{0,0,0.50196078}\makebox(0,0)[t]{\lineheight{1.25}\smash{\begin{tabular}[t]{c}$\langle a \rangle$\end{tabular}}}}%
    \put(0.4965513,0.10789552){\color[rgb]{0,0,0.50196078}\makebox(0,0)[t]{\lineheight{1.25}\smash{\begin{tabular}[t]{c}$\langle b,c \rangle$\end{tabular}}}}%
    \put(0.18507187,0.08682146){\color[rgb]{1,0,0}\makebox(0,0)[t]{\lineheight{1.25}\smash{\begin{tabular}[t]{c}$h_1$\end{tabular}}}}%
    \put(-0.40547742,2.13428267){\color[rgb]{0,0,0}\makebox(0,0)[lt]{\begin{minipage}{2.54478703\unitlength}\centering \end{minipage}}}%
    \put(0,0){\includegraphics[width=\unitlength,page=2]{SharpExampleTF.pdf}}%
    \put(0.25069427,0.16415196){\color[rgb]{0,0,0.50196078}\makebox(0,0)[t]{\lineheight{1.25}\smash{\begin{tabular}[t]{c}$\langle a \rangle$\end{tabular}}}}%
    \put(0.25189899,0.00390123){\color[rgb]{0,0,0.50196078}\makebox(0,0)[t]{\lineheight{1.25}\smash{\begin{tabular}[t]{c}$\langle a \rangle$\end{tabular}}}}%
    \put(0,0){\includegraphics[width=\unitlength,page=3]{SharpExampleTF.pdf}}%
    \put(0.80969813,0.08682324){\color[rgb]{1,0,0}\makebox(0,0)[t]{\lineheight{1.25}\smash{\begin{tabular}[t]{c}$h_2$\end{tabular}}}}%
    \put(0.90142864,0.08563619){\color[rgb]{0,0,0.50196078}\makebox(0,0)[t]{\lineheight{1.25}\smash{\begin{tabular}[t]{c}$\langle g_1,g_2 \rangle$\end{tabular}}}}%
    \put(0.71559676,0.16135054){\color[rgb]{0,0,0.50196078}\makebox(0,0)[t]{\lineheight{1.25}\smash{\begin{tabular}[t]{c}$\langle g_1 \rangle$\end{tabular}}}}%
    \put(0.61567317,0.14171235){\color[rgb]{0,0,0.50196078}\makebox(0,0)[t]{\lineheight{1.25}\smash{\begin{tabular}[t]{c}$\langle g_1 \rangle$\end{tabular}}}}%
    \put(0.81763516,0.15847773){\color[rgb]{0,0,0.50196078}\makebox(0,0)[t]{\lineheight{1.25}\smash{\begin{tabular}[t]{c}$\langle g_1 \rangle$\end{tabular}}}}%
    \put(0.6172593,0.02475901){\color[rgb]{0,0,0.50196078}\makebox(0,0)[t]{\lineheight{1.25}\smash{\begin{tabular}[t]{c}$\langle g_2 \rangle$\end{tabular}}}}%
    \put(0.71559676,0.00900958){\color[rgb]{0,0,0.50196078}\makebox(0,0)[t]{\lineheight{1.25}\smash{\begin{tabular}[t]{c}$\langle g_2 \rangle$\end{tabular}}}}%
    \put(0.81552035,0.01150659){\color[rgb]{0,0,0.50196078}\makebox(0,0)[t]{\lineheight{1.25}\smash{\begin{tabular}[t]{c}$\langle g_2 \rangle$\end{tabular}}}}%
  \end{picture}%
\endgroup%

\end{proof}

Before proving \thref{res:sharp_bound}, which will show the above examples are the best possible, it's useful to compare their constructions to the proof of \thref{res:key_lemma_P_closed}. We start with a single orbit of seed vertices with representative stabiliser $\left\langle a \right\rangle$. Our initial folds induce a new orbit of seed vertices on the central vertex. Moreover the two connecting edges on the loop representing $h_1$ are now non-trivial and so we collapse it. In doing this we'll reduce the $\mathtt{1}$--weight by two but only collapse either $\left\lfloor \frac{3k}{2} \right\rfloor$ or $k$ edges, depending on which construction we're talking about. This is less than the $2k$ edges theoretically allowed by the lemma. Continuing we then successively collapse each loop; getting rid of the maximally possible $2k$ edges each time. 

With this comparison in mind we will now show that such an inefficiency must occur at a particular point in \thref{res:key_lemma_P_closed}. Specifically whenever a vertex first obtains a non-cyclic stabiliser.

\begin{lemma}\thlabel{res:cyclic_inefficiency}
Let $\alpha: R \rightarrow \tilde{R}$ be a fold which factors through $\Psi: T' \rightarrow T$ and let $S$ be a set of seed vertices for $R$. Suppose that the action on $T$ is $k$--acylindrical. Suppose also that both every vertex stabiliser of $R$ is cyclic and every connecting group of $S$ is trivial. Then one of the following holds 
\begin{itemize}
\item Every vertex stabiliser of $R$ is cyclic and we can find a set of seed vertices $\tilde{S}$ for $\tilde{R}$ such that every connecting group of $\tilde{S}$ is trivial and $W_{\mathtt{1},\tilde{S}} \leq W_{\mathtt{1},S}$. 
\item There is a simplicial map $\rho: \tilde{R} \rightarrow \tilde{R}'$ which factors through $\Psi$ and we can collapse at most $\left\lfloor \frac{3k}{2} \right\rfloor$ orbits of edges of $T$ to get a new tree $\overline{T}$ such that the following holds. Let $\overline{R}$ be the tree obtained by collapsing the edges of $\tilde{R}'$ corresponding to $T \rightarrow \overline{T}$. There is a set of seed vertices $\overline{S}$ for $\overline{R}$ with $W_{\mathtt{1},\overline{S}} \leq W_{\mathtt{1},S} - 2$. Moreover if $G$ is torsion-free then we can obtain $\overline{T}$ by collapsing at most $k$ edges of $T$. 
\end{itemize}
\end{lemma}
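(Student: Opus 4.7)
The approach is a single-fold analysis: write $\alpha$ as folding edges $e_1 = [x, y_1]$ and $e_2 = [x, y_2]$ into $e' = [x, y']$. The two options in the conclusion correspond to whether one can maintain both invariants (cyclic vertex stabilizers and trivial connecting groups) after the fold.

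\textbf{Benign case.} Suppose there is a set of seed vertices $\tilde{S}$ for $\tilde{R}$ with all connecting groups trivial and $W_{\mathtt{1}, \tilde{S}} \leq W_{\mathtt{1}, S}$. Its existence mirrors the case analysis of \thref{res:forests_folds_P_closed} across the sub-cases IA, IB, IIA, IIB, IIIA, IIIB of the fold classification; the extra point is keeping the connecting groups trivial. Whenever that proof would need to promote an edge of non-trivial stabilizer to a connecting edge (as in its Case 2), I swap it back into the forest of influence by an elementary transformation along a branch with equal cyclic stabilizer. Such a branch exists precisely because the cyclic-join hypothesis ensures the relevant stabilizer is already contained in a cyclic over-group along another direction.

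\textbf{Problematic case.} Now no such choice exists: some vertex $y' \in \tilde{R}$ is forced to have a non-cyclic stabilizer. The pre-image cyclic subgroups $H_1, H_2 \leqslant \stab y'$ (from $y_1, y_2$, possibly conjugated for Type B folds or supplemented by a single added element for Types II/III) have non-cyclic join, and both $H_1, H_2$ fix the common image $v := \Psi'(y') \in T$. I use Stallings' folding theorem (\thref{res:stallings_folding_thm}) to perform additional type II folds $\rho: \tilde{R} \to \tilde{R}'$ that pull generators of $H_1, H_2$ along their respective fixator paths toward $v$; the $k$-acylindricity of $T$ on the non-trivial $H_i$ forces these sequences of folds to terminate. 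Let $\gamma_1, \gamma_2 \subset T$ be the reduced paths through $v$ fixed by $H_1, H_2$, each of length at most $k$. I collapse $\gamma_1 \cup \gamma_2$ to a single vertex in $T$ to obtain $\overline{T}$, and correspondingly collapse the pre-image edges in $\tilde{R}'$ to obtain $\overline{R}$. The two orbits of seed vertices corresponding to $y_1, y_2$ now merge into one, so two orbits of trivial connecting edges are absorbed, giving $W_{\mathtt{1}, \overline{S}} \leq W_{\mathtt{1}, S} - 2$.

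\textbf{Main obstacle.} The critical input is the bound $|\gamma_1 \cup \gamma_2| \leq \lfloor 3k/2 \rfloor$. Each arc has length at most $k$ and they meet at $v$; optimizing over the position of $v$ within each arc and how the two arcs extend around $v$, the worst case is a tripod with branch lengths $\lceil k/2 \rceil$, $\lfloor k/2 \rfloor$, and $\lfloor k/2 \rfloor$, which exactly matches the first-loop structure realized in \thref{res:sharp_example}. In the torsion-free case, $H_1, H_2$ are infinite cyclic with trivial intersection (since their non-cyclic join rules out commuting), so their fixators in $T$ align along a single axis direction through $v$ rather than branching, yielding the improved bound $|\gamma_1 \cup \gamma_2| \leq k$. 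Establishing these geometric bounds rigorously, by case analysis on the fold type and the local tree geometry of $T$ around $v$, is the main combinatorial hurdle of the proof.
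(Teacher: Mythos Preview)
Your case split (benign versus problematic) matches the paper, and the idea of pushing further folds $\rho$ to make the relevant paths inject into $T$ is right. But the heart of the lemma is the bound $\lfloor 3k/2 \rfloor$, and your justification for it does not work as written.

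You argue that $\gamma_1,\gamma_2$ are arcs of length $\leq k$ through a common vertex $v$, and then assert that ``optimizing over the position of $v$'' gives a tripod of total size $\lfloor 3k/2 \rfloor$. That optimisation is not valid: two length-$k$ arcs sharing only an endpoint have union of size $2k$, so some extra input is needed. The paper's input is a \emph{doubling trick} exploiting the hypothesis that every vertex stabiliser of $R$ is cyclic. Concretely, after folding so that the branch $f'_i$ from $y''$ to the closest point $u'_i$ with stabiliser equal to $\stab u_i$ injects into $T$, pick $g_i\in\stab u'_i$ not fixing the first edge of $f'_i$ (possible because $\stab u'_i$ is cyclic but strictly larger than $\stab y_i$, so $g_i$ moves that edge). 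Then $f'_i\cup g_i f'_i$ is a reduced path of length $2|f'_i|$ fixed by the non-trivial group $\stab y_i$, whence $|f'_i|\leq k/2$. This is what forces the $k/2$ branch lengths you wanted; your proposal never invokes cyclicity of the seed stabilisers and so has no mechanism to produce the halving.

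Your torsion-free improvement is also off. The paper's reason is algebraic, not geometric: when $u_1$ and $u_2$ are in the same orbit, the path $f'_1\cup\tilde f\cup h^{-1}f'_2$ is fixed by the intersection of two non-trivial subgroups of the cyclic group $\stab u_1$; if $G$ is torsion-free this intersection is non-trivial (two non-trivial subgroups of $\mathbb{Z}$ meet non-trivially), giving length $\leq k$. Your claim that $H_1,H_2$ have trivial intersection and ``align along a single axis'' is neither what is needed nor what actually happens.
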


\begin{proof}
Suppose that $\alpha$ folds together the edges $e_1 = [x,y_1]$ and $e_2 = [x,y_2]$ to an edge $e' = [x',y']$. If there's a forest of influence containing both $e_1$ and $e_2$ then we can just take $\tilde{S} := \alpha(S)$ and we end up in the first outcome listed in the statement. The same applies the fold is of type I or II and either of the $y_i$ have trivial stabiliser. Similarly we can take $\tilde{S} := \alpha(S) \cup G \left\lbrace y' \right\rbrace$ if $\alpha$ is a fold of type III and the $y_i$ have trivial stabilisers. 

Consider the case where $\alpha$ is a fold of type III, the stabiliser of the $y_i$ are non-trivial and $hy_1 = y_2$. Suppose that $y_1$ is influenced by $u$ and $u' := \alpha (u)$. Both $u'$ and $y'$ are fixed by the stabiliser of $y_1$; hence as in the proof of \thref{res:main_simple} we can apply a series of folds $\rho: \tilde{R} \rightarrow \tilde{R}'$ which factors through $\Psi$ such that the reduced edge path $f'$ from $u'$ to $y'$ consists of at most $k$ edges and is injective under $\delta: \tilde{R}' \rightarrow T$. Now we define $\overline{T}$ by collapsing the image of $f'$ in $T$ and $\overline{R}$ as in the statement. Note that the image of $S$ in $\overline{T}$ is a set of seed vertices with $W_{\mathtt{1},\overline{S}} \leq W_{\mathtt{1},S} - 2$ as every connecting edge is trivial and $\chi(\overline{R}/G) = \chi(R/G) + 2$. 
 
\begin{figure}[h!]
\centering
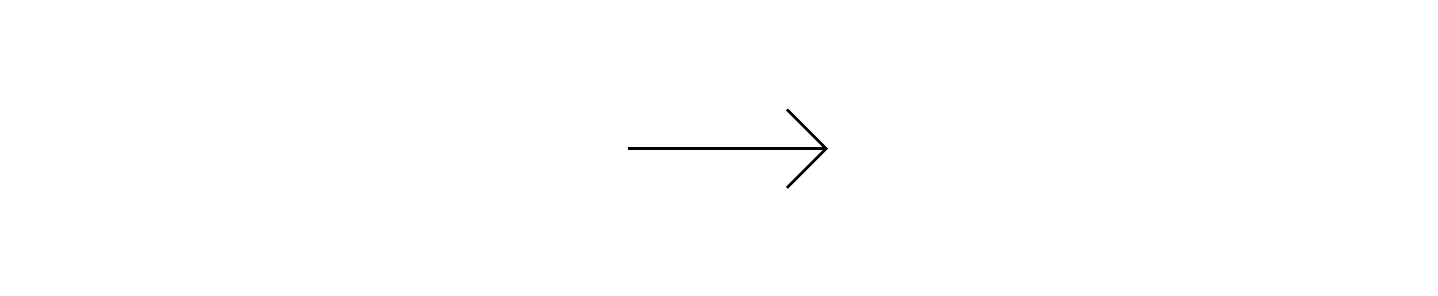
\caption{The case where the fold $\alpha$ is of type III. The edge path labelled $f$ is fixed by some non-trivial member of $G$, hence its image in $\tilde{R}$ has diameter at most $k$. In particular if we replace $f$ with its image in $\tilde{R}$ we see that the path $f'$ has length at most $k$.}
\end{figure}

So now we can assume that the fold is of type I or II where the stabiliser of both $y_1$ and $y_2$ are non-trivial. If $\alpha$ is a fold of type I we say that $y_i$ is influenced by $u_i$ and $f_i$ be the branch of $y_i$. If instead $\alpha$ is a fold of type II we say that $x$ is influenced by $u_1$ and $y_2$ is influenced by $u_2$. We also let $f_1$ be the union of the branch of $x$ together with $e_1$ and let $f_2$ be the branch of $y_2$. 

First consider the case where $u_1$ is inequivalent to $u_2$. Let $\rho: \tilde{R} \rightarrow \tilde{R}'$ be the composition of folds on $f_1$ and $f_2$ (separately) which causes $\gamma: \overline{R'} \rightarrow T$ to be locally injective on $f_1$ and $f_2$. Let $u'_i$ be the vertex closest to $y'' := \rho(y')$ with stabiliser equal to $\stab y_i$ and let $f'_i$ be the reduced edge path from $u'_i$ to $y''$. We now define $\overline{T}$ by collapsing the (orbits of the) images of $f'_1$ and $f'_2$ in $T$ and $\overline{R}$ as in the statement. Observe that we have a set of seed vertices $\overline{S}$ for $\overline{R}$ defined to be the union of the image of $S \setminus G \left\lbrace u_1, u_2 \right\rbrace$ together with the image of $G \left\lbrace y'' \right\rbrace$ and observe that $W_{\mathtt{1},\overline{S}} \leq W_{\mathtt{1},S} - 2$. It remains to bound the number of edges we've collapsed. If $f'_i$ consists of more than a single vertex let $g_i$ be a group element which fixes $u'_i$ but no edge of $f'_i$. Then since $\stab u'_i = \stab u_i$ is cyclic we see that $\stab y_i$ fixes the reduced edge path $f'_i \cup g_i f'_i$. So since the action on $T$ is $k$--acylindrical we see that $f'_1$ and $f'_2$ each consist of at most $\frac{k}{2}$ edges each and so we have collapsed at most $k$ edges total.

\begin{figure}[h!]
\centering
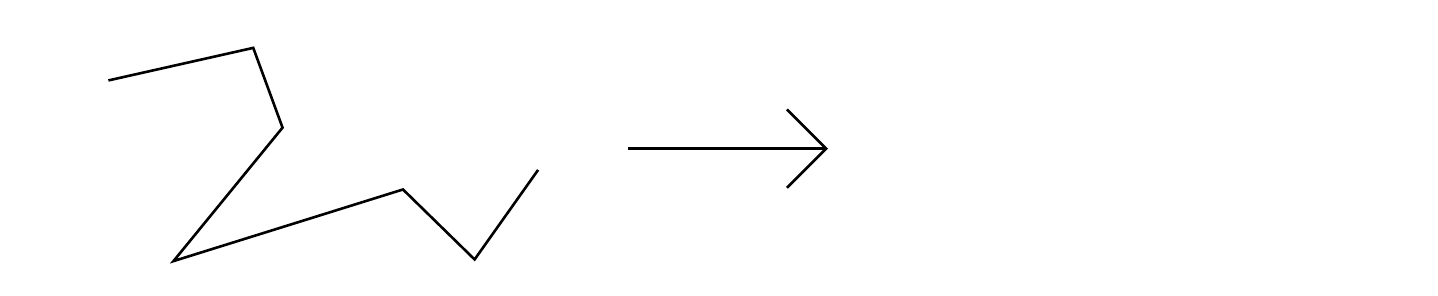 \\
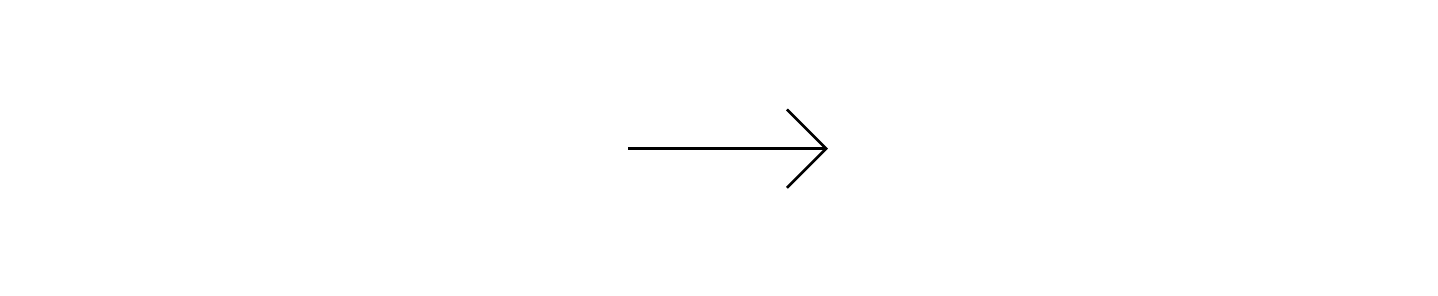
\caption{The graph of groups in the cases where the $u_i$ aren't equivalent.}
\end{figure}

Now consider the case $u_2 = h u_1$ for some $h \in G$. Define $\rho: \tilde{R} \rightarrow \tilde{R}'$, $u'_i$, $y''$ and $f'_i$ as before. Let $\tilde{f}$ be the path from $u'_1$ to $h^{-1}u'_2$. Define $\overline{T}$ by collapsing the images of $f'_1$, $f'_2$ and $\tilde{f}$ in $T$. We have a set of seed vertices $\overline{S}$ for $\overline{R}$ defined to be the union of the image of $S \setminus G \left\lbrace u_1 \right\rbrace$ together with the image of $G \left\lbrace y'' \right\rbrace$ and again we have $W_{\mathtt{1},\overline{S}} \leq W_{\mathtt{1},S} - 2$. It now remains to bound the number of edges collapsed. As before the paths $f'_i$ have at most $\frac{k}{2}$ edges. If $\tilde{f}$ consists of just a single vertex $u'_1$ then we are done as before. If not then observe that $f'_1 \cup \tilde{f} \cup h^{-1}f'_2$ is a reduced edge path from $y''$ to $h^{-1}y''$. If $G$ is torsion-free then as $\stab u_1$ is cyclic then there is some non-trivial subgroup which fixes $f'_1 \cup \tilde{f} \cup h^{-1}f'_2$ and so we've collapsed at most $k$ edges. If $G$ isn't torsion-free then we are only guaranteed to have a non-trivial subgroup which fixes $f'_1 \cup \tilde{f}$. Thus $f'_1 \cup \tilde{f} \cup h^{-1}f'_2$ has at most $k + \frac{k}{2} = \frac{3k}{2}$ edges. $\square$

\begin{figure}[h!]
\centering
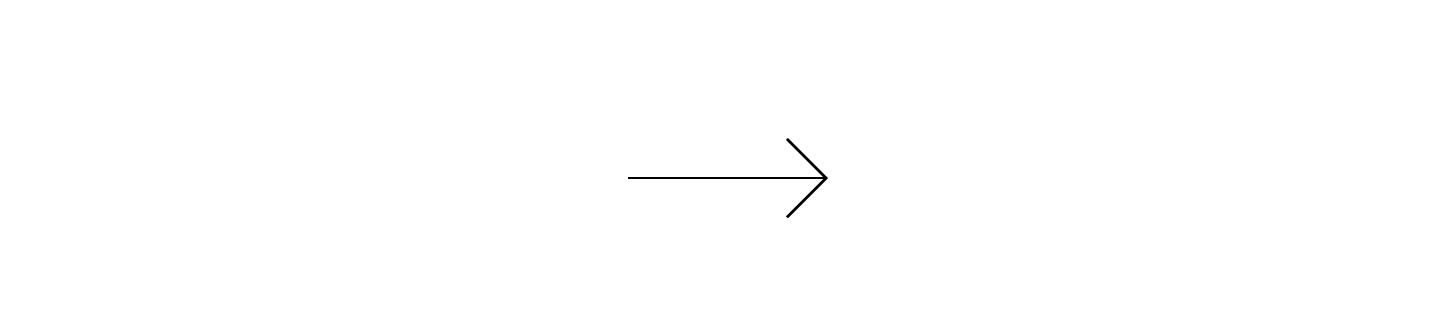 \\
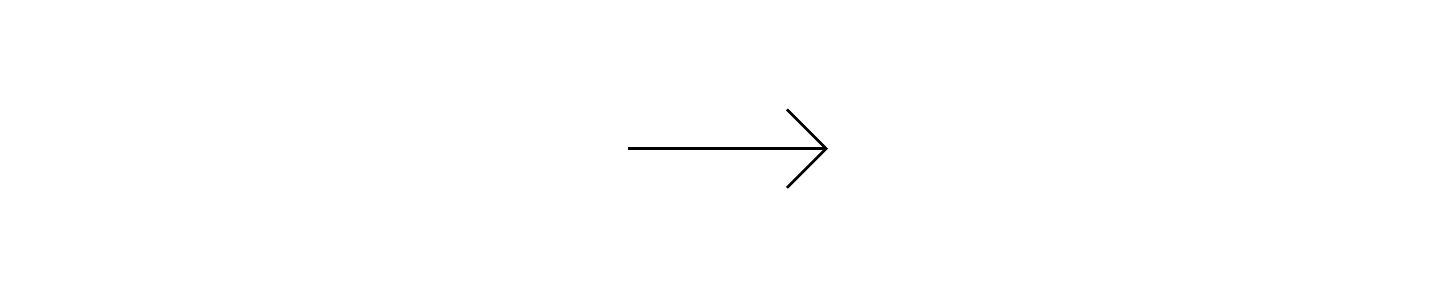 
\caption{The graph of groups in the cases where $u_1$ and $u_2$ are equivalent.}
\end{figure}
\end{proof}

\begin{proof}[Proof of \thref{res:sharp_bound}]
Proceed as in the proof of \thref{res:main_result} \ref{pt:main_P_closed}. We have a homomorphism $\phi: F(X) \rightarrow G$ where $X$ is a minimal generating set for $G$ and combinatorial map $\Psi: T' \rightarrow T$. As in the proof of \thref{res:key_lemma_P_closed} we now decompose $\Psi$ into folds $\alpha_i: T_{i-1} \rightarrow T_i$. We then apply \thref{res:cyclic_inefficiency} to each $\alpha_i$ in turn until one of them causes us to collapse edges. (This must happen eventually as all the edges of $T$ have non-trivial stabiliser.) Then apply \thref{res:main_result} \ref{pt:main_P_closed} to the collapsed tree to get the desired bound. $\square$
\end{proof}


\begin{thebibliography}{99}



\bibitem{Baudisch}
A.Baudisch:
\textit{Subgroups of semifree groups.}
Acta Math. Acad. Sci. Hungar. 38 (1981), no. 1--4, 19--28.


\bibitem{BestvinaFeighn}
M.Bestvina, M.Feighn:
\textit{Bounding the complexity of simplicial group actions on trees.}
Invent. Math. 103 (1991), no. 3, 449--469.


\bibitem{ThompsonGroup}
C.Bleak, M.Kassabov, F.Matucci:
\textit{Structure theorems for groups of homeomorphisms of the circle. (English summary)}
Internat. J. Algebra Comput. 21 (2011), no. 6, 1007--1036.


\bibitem{FiniteHyperbolic}
O.V.Bogopol'skiĭ, V.N.Gerasimov:
\textit{Finite subgroups of hyperbolic groups. (Russian. Russian summary)}
Algebra i Logika 34 (1995), no. 6, 619--622, 728; translation in Algebra and Logic 34 (1995), no. 6, 343--345 (1996)


\bibitem{Brady}
N.Brady:
\textit{Finite subgroups of hyperbolic groups. (English summary)}
Internat. J. Algebra Comput. 10 (2000), no. 4, 399--405.


\bibitem{BridsonHaefliger}
M.Bridson, A.Haefliger:
\textit{Metric spaces of non-positive curvature.}
Grundlehren der Mathematischen Wissenschaften [Fundamental Principles of Mathematical Sciences], 319. Springer--Verlag, Berlin, 1999. xxii+643 pp. ISBN: 3--540--64324--9


\bibitem{DelzantAcylindrical}
T.Delzant:
\textit{Sur l'accessibilit\'{e} acylindrique des groupes de pr\'{e}sentation finie. (French. English, French summary) [On the acylindrical accessibility of finitely presented groups]}
Ann. Inst. Fourier (Grenoble) 49 (1999), no. 4, 1215--1224.


\bibitem{Dunwoody_inaccess}
M.J.Dunwoody:
\textit{An inaccessible group.}
Geometric group theory, Vol. 1 (Sussex, 1991), 75--78,
London Math. Soc. Lecture Note Ser., 181, Cambridge Univ. Press, Cambridge, 1993.


\bibitem{Dunwoody_fp}
M.J.Dunwoody:
\textit{The accessibility of finitely presented groups.} 
Invent. Math. 81 (1985), no. 3, 449--457.


\bibitem{VirCycStructure}
D.Juan-Pineda, I.J.Leary:
\textit{On classifying spaces for the family of virtually cyclic subgroups. (English summary)}
Recent developments in algebraic topology, 135--145,


\bibitem{Grushko}
I.A.Grushko:
\emph{On the bases of a free product of groups.}
Matematicheskii Sbornik, vol 8 (1940), pp. 169--182


\bibitem{Sela}
Z.Sela:
\emph{Acylindrical accessibility for groups. (English summary)}
Invent. Math. 129 (1997), no. 3, 527--565.


\bibitem{Serre_trees}
J-P.Serre:
\emph{Trees.}
Translated from the French original by John Stillwell. Corrected 2nd printing of the 1980 English translation. Springer Monographs in Mathematics. Springer--Verlag, Berlin, 2003. x+142 pp. ISBN: 3--540--44237--5


\bibitem{Stallings}
J.R.Stallings:
\emph{Foldings of G-trees. Arboreal group theory}
(Berkeley, CA, 1988), 355--368, Math. Sci. Res. Inst. Publ., 19, Springer, New York, 1991.


\bibitem{Wade}
R.Wade:
\emph{Folding free-group automorphisms. (English summary)}
Q. J. Math. 65 (2014), no. 1, 291--304.


\bibitem{Weidmann_kC}
R.Weidmann:
\textit{On accessibility of finitely generated groups. (English summary)} 
Q. J. Math. 63 (2012), no. 1, 211--225.


\bibitem{WeidmannInitial}
R.Weidmann:
\textit{The Nielsen method for groups acting on trees.} 
Proc. London Math. Soc. (3) 85 (2002), no. 1, 93--118. 





\end{thebibliography}
\end{document}